\DeclareMathOperator{\tp}{tp}
\DeclareMathOperator{\qf}{qf}
\DeclareMathOperator{\dpr}{dpr}
\DeclareMathOperator{\bdn}{bdn}
\begin{document}


\def\Ind#1#2{#1\setbox0=\hbox{$#1x$}\kern\wd0\hbox to 0pt{\hss$#1\mid$\hss}
\lower.9\ht0\hbox to 0pt{\hss$#1\smile$\hss}\kern\wd0}
\def\ind{\mathop{\mathpalette\Ind{}}}
\def\Notind#1#2{#1\setbox0=\hbox{$#1x$}\kern\wd0\hbox to 0pt{\mathchardef
\nn=12854\hss$#1\nn$\kern1.4\wd0\hss}\hbox to
0pt{\hss$#1\mid$\hss}\lower.9\ht0 \hbox to
0pt{\hss$#1\smile$\hss}\kern\wd0}
\def\nind{\mathop{\mathpalette\Notind{}}}

\def\Cc{\mathfrak{C}}
\def\Ll{\mathcal{L}}
\def\Kk{\mathcal{K}}

\newcommand{\Set}[2]{\{  #1 \ | \ #2  \}}

\newtheorem{que}{Question}
\newtheorem*{que*}{Question}

\newtheorem{defi}{Definition}[section]
\newtheorem{exe}{Exercise}[section]
\newtheorem{lem}[defi]{Lemma}
\newtheorem{theo}[defi]{Theorem}
\newtheorem*{theo*}{Theorem}
\newtheorem{cor}[defi]{Corollary}
\newtheorem{que2}[defi]{Question}
\newtheorem{prop}[defi]{Proposition}
\newtheorem*{prop*}{Proposition}
\newtheorem{conj}{Conjecture}
\newtheorem{fact}[defi]{Fact}
\theoremstyle{definition}
\newtheorem{rk}[defi]{Remark}
\newtheorem{exa}[defi]{Example}
\newtheorem*{cla}{Claim}
\newenvironment{sol}{$\Box$}{\rule{1ex}{1ex}}
\newenvironment{sk}{Sketch of proof:}{\rule{1ex}{1ex}}
\newenvironment{clmproof}[1][\proofname]{\proof[#1]\renewcommand{\qedsymbol}{$\square$(claim)}}{\endproof}

\subjclass[2010]{06A12, 03C45, 03C95}
\keywords{NIP/stable types, forking, dp-rank, trees}

\title{Non-forking and preservation of NIP and dp-rank}
\author{Pedro Andr\'es Estevan \and Itay Kaplan}

\thanks{The collaboration started during a research stay of the first author in The Hebrew University of Jerusalem partially supported by  Fundación Montcelimar, MTM2017-86777-P and 2017SGR-270.} 
\thanks{The second author would like to thank the Israel Science foundation for their support of this research (Grants no. 1533/14 and 1254/18).}

\begin{abstract}
        We investigate the question of whether the restriction of an NIP type $p\in S(B)$ which does not fork over $A\subseteq B$ to $A$ is also NIP, and the analogous question for dp-rank. We show that if $B$ contains a Morley sequence $I$ generated by $p$ over $A$, then $p\restriction AI$ is NIP and similarly preserves the dp-rank. This yields positive answers for generically stable NIP types and the analogous case of stable types. With similar techniques we also provide a new more direct proof for the latter. Moreover, we introduce a general construction of ``trees whose open cones are models of some theory'' and in particular an inp-minimal theory DTR of dense trees with random graphs on open cones, which exemplifies a negative answer to the question.
\end{abstract}

\maketitle

\section{Introduction}

This paper contributes to the study of classification-theoretic properties of types in arbitrary theories. More specifically we are interested in the following question:

\begin{que2}\label{question} Let $p\in S(B)$ be an NIP type which does not fork over $A\subseteq B$, is $p\restriction A$ NIP? \end{que2}

This type of question was first considered in \cite[Question 1.2]{HassonOnshuus} where ``NIP'' is replaced by ``stable''. There, the authors answered that question in the case of Rosy theories \cite[Corollary 2.27]{HassonOnshuus} (they also had an incorrect argument when the theory is NIP). In \cite[Corollary 2.5]{Gen} this (\textit{i.e.} the stable case) was positively resolved. A variant of Question \ref{question} was also posed for simple types in \cite[Problem 6.6]{Ch}. 


When the base set $A$ is a model and the type $p$ is definable over $A$, Question \ref{question} and its stable variant turn out to be quite easy (this in particular implies the stable variant of Question \ref{question} when the base is a model, which was already shown in \cite[Claim 2.19]{HassonOnshuus}).  

More is true: if $p$ is NIP/stable and does not co-divides over $A$ then $p\restriction A$ is NIP/stable (see Proposition \ref{pro:positive1} and Corollary \ref{cor:positive2}; see also Corollary \ref{cor:stable types nf over models}). The analogous result for dp-rank is also proved with co-forking instead of co-dividing (Proposition \ref{prop:preserving dp-rank under co-forking}) from which we easily deduce a preservation result for dp-rank under very strong assumptions (finite rank and additivity, see Corollary \ref{cor:if dp-rank is additive then preserved}). As we will see in Section \ref{sec:counterexample}, in its full generality Question \ref{question} has a negative answer, however we still have a partial positive answer. The following is Corollary \ref{cor:result2}.

\begin{theo*} Let $p\in S(B)$ be an NIP type which does not fork over $A\subseteq B$. If $I\subseteq B$ is an infinite Morley sequence generated by $p$ over $A$, then $p\restriction AI$ is NIP.\end{theo*}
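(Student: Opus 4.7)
The plan is to realize $p\restriction AI$ as the restriction to $AI$ of a global $A$-invariant extension of $p$, and then use $A$-invariance against the $A$-indiscernibility automatically enjoyed by any $AI$-indiscernible sequence to rule out any IP witness.

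Using the NIP non-forking hypothesis together with the machinery developed earlier in the paper for NIP types, I would first obtain a global extension $q\in S(\Cc)$ of $p$ which is invariant over $A$ (or at least over $\mathrm{bdd}(A)$, which suffices equally well below). I would arrange $I$ to be a Morley sequence of $q$ over $A$; this is compatible with the assumption that $I$ is a Morley sequence generated by $p$ over $A$ inside $B$. Since $AI\subseteq B$ and $q\restriction B=p$, we have $q\restriction AI=p\restriction AI$, so it is enough to show that $q\restriction AI$ is NIP.

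Assume for contradiction that $\varphi(x,y)$ and an $AI$-indiscernible sequence $(b_j)_{j<\omega}$ witness IP for $p\restriction AI$: there is $c\models p\restriction AI$ with $\models\varphi(c,b_j)$ iff $j$ is even. Working in a sufficiently saturated $\Cc^+\supseteq\Cc$, let $c^*$ realize the canonical $A$-invariant extension of $q$ to $\Cc^+$. Since $c$ and $c^*$ both realize $q\restriction AI$, homogeneity yields $\tau\in\mathrm{Aut}(\Cc^+/AI)$ with $\tau(c)=c^*$. The sequence $(\tau(b_j))_j$ is then $AI$-indiscernible, hence also $A$-indiscernible, and $c^*$ inherits the alternation: $\models\varphi(c^*,\tau(b_j))$ iff $j$ is even.

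The $A$-invariance of $q$ now closes the argument. Because $c^*\models q$, for every $j$ one has $\models\varphi(c^*,\tau(b_j))$ iff $\varphi(x,\tau(b_j))\in q$; and because $q$ is $A$-invariant while all $\tau(b_j)$ share the same type over $A$ (being terms of an $A$-indiscernible sequence), $\varphi(x,\tau(b_j))\in q$ is constant in $j$. Hence $\varphi(c^*,\tau(b_j))$ is constant in $j$, contradicting alternation. The main obstacle is therefore not the invariance-kills-alternation step itself but the production of an $A$-invariant global extension $q$ of $p$ from the NIP non-forking hypothesis; once this tool is supplied from earlier parts of the paper, the rest of the proof is essentially automatic.
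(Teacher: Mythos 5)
Your argument cannot be correct as written, because it never uses the Morley sequence $I$ in any essential way: replace $AI$ by $A$ throughout and the same steps would ``prove'' that $p\restriction A$ is NIP whenever $p$ is NIP and non-forking over $A$. But the paper shows in Section~\ref{sec:counterexample} (Corollary~\ref{cor:main corollary about examples}) that this fails, so there must be a gap, and it lies precisely in the step you flag as unproblematic.

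The gap is the domain of the invariant type. Say $q^+$ is the canonical $A$-invariant (or $A$-Lascar-invariant, via Lemma~\ref{lem:lascar}) extension of $q$ over $\Cc^+$ and $c^*\models q^+$; then $c^*$ necessarily lives \emph{outside} $\Cc^+$, so any automorphism $\tau$ with $\tau(c)=c^*$ must be an automorphism of some still bigger model, and the images $\tau(b_j)$ have no reason to land inside $\Cc^+$, the domain of $q^+$. Consequently the key bridge ``$\models\varphi(c^*,\tau(b_j))$ iff $\varphi(x,\tau(b_j))\in q^+$'' is meaningless, and the invariance of $q^+$ over $A$ cannot be brought to bear on $\tau(b_j)$ at all. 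One cannot fix this by choosing $\tau$ to stabilize the $b_j$'s (then $\tau(c)$ need not realize $q$), nor by choosing $c^*$ inside $\Cc^+$ realizing only $q$ (then the same problem reappears one level down, since $\tau(b_j)$ may escape $\Cc$).

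The paper's actual argument (Theorem~\ref{the:result1}) avoids this by never comparing $c$ to a full realization of the global type. Instead it uses NIP of $p$ twice (Remark~\ref{rem:pglobnip}) to produce formulas $\sigma(x,c)\in p$ and $\chi(x,d)\in p$ with $\sigma\wedge\varphi$ and $\chi\wedge\sigma$ both NIP, then inductively constructs parameters $c_n\equiv^{Ls}_A c$ in $\Cc$ so that $\sigma(a_i,c_n)$ alternates for $i<n$, where $(a_i)$ is the given Morley sequence $I$. The odd induction step is exactly where Lascar-invariance is exploited, but always against parameters that stay inside $\Cc$. Compactness then makes $I$ itself into an IP pattern for $\chi(x,d)\wedge\sigma(x,z)$, contradicting NIP of $p$. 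The Morley sequence is thus used essentially: the contradiction is that the members of $I$ alternate against the constructed parameter $c_\omega$, and this is where the hypothesis that $I\subseteq B$ earns its keep.
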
 

This result is a kind of approximation: if we can add a Morley sequence to the domain of the restriction, then we can ensure the NIP character of the type is preserved. Our proof adapts to show a stronger result: dp-rank is also preserved (See Theorem \ref{the:preserving dp-rank}).

\begin{theo*} Let $p\in S(B)$ be a type which does not fork over $A\subseteq B$. If there is a Morley sequence $I\subseteq B$ generated by $p$ over $A$, then $\dpr(p\restriction AI) = \dpr(p)$. \end{theo*}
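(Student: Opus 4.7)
The plan is to prove $\dpr(p\restriction AI)=\dpr(p)$ by separately establishing the two inequalities. The inequality $\dpr(p)\leq \dpr(p\restriction AI)$ is immediate from monotonicity of dp-rank under base restriction: any ict-pattern of depth $\kappa$ witnessing $\dpr(p)\geq \kappa$---formulas $\varphi_\alpha(x,y_\alpha)$ together with mutually $B$-indiscernible sequences $(J_\alpha)_{\alpha<\kappa}$ such that for every $\eta\colon\kappa\to\omega$ the associated pattern is consistent with $p$---is automatically a witness that $\dpr(p\restriction AI)\geq \kappa$, since mutual $B$-indiscernibility entails mutual $AI$-indiscernibility (as $AI\subseteq B$) and consistency with $p$ entails consistency with the weaker $p\restriction AI$.

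The content lies in the reverse inequality $\dpr(p\restriction AI)\leq \dpr(p)$. The plan is to adapt the strategy of the NIP-preservation statement above, replacing ``an alternating sequence for one formula'' by ``mutually indiscernible sequences for $\kappa$ formulas''. Starting from an ict-pattern of depth $\kappa$ consistent with $p\restriction AI$, witnessed by formulas $\varphi_\alpha$ and mutually $AI$-indiscernible sequences $(J_\alpha)_{\alpha<\kappa}$, the first step is a Ramsey/Erd\H{o}s--Rado extraction applied jointly to the $\kappa$-family, producing sequences $(J'_\alpha)_{\alpha<\kappa}$ that realize the same EM-type over $AI$ but are now mutually indiscernible over the whole of $B$. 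Because only the $AI$-EM-type is used to test finite fragments of the pattern, preservation of this EM-type preserves consistency of the pattern with $p\restriction AI$ by compactness. The second, crucial step is to upgrade consistency from $p\restriction AI$ to $p$, and it is here that the hypothesis on $I$ enters: since $I\subseteq B$ is a Morley sequence generated by the non-forking type $p$ over $A$, its genericity can be used to transfer a realization of $p\restriction AI$ together with the pattern into a realization of the full $p$, via the same homogeneity/averaging argument over $I$ that underlies the NIP case.

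The main obstacle I foresee is precisely the joint nature of the construction. In the depth-$1$ (NIP) case one manipulates a single formula and a single indiscernible sequence, so both the Ramsey extraction and the Morley-sequence transfer are essentially one-dimensional. In the dp-rank setting the two steps must be carried out simultaneously for $\kappa$ sequences while preserving mutual indiscernibility and the full ict-pattern; ensuring that the extraction over $B$ does not destroy the realization data needed to transfer consistency from $p\restriction AI$ to $p$ is the delicate coordination. I expect the argument will have to perform the extraction relative to---rather than independently of---the Morley sequence $I$, so that the information carried by $I$ about $p$ remains available to the upgrading step after passing to the $B$-indiscernible family.
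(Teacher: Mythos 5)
Your first inequality $\dpr(p)\leq\dpr(p\restriction AI)$ is indeed trivial, and the reduction to the reverse inequality is correct. However, your plan for the reverse inequality has a genuine gap precisely at the point you identify as crucial: the ``upgrade consistency from $p\restriction AI$ to $p$'' step. You describe it as using the ``genericity'' of $I$ ``via the same homogeneity/averaging argument over $I$ that underlies the NIP case'', but you do not give a concrete mechanism, and the NIP-case proof does not in fact work by transferring a realization of $p\restriction AI$ to a realization of $p$. After you extract $(J'_\alpha)$ to be mutually $B$-indiscernible while preserving the $AI$-EM-type, you still only know that some $d\models p\restriction AI$ witnesses the pattern; nothing forces that $d$, or any other element with the same pattern behaviour, to realize the full type $p$ over $B$. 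The Morley sequence does not supply any direct ``averaging'' that promotes such a $d$ to a realization of $p$, and indeed no step in your outline uses that $p$ itself is NIP, which is an essential ingredient.

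The paper's actual argument runs in the opposite direction: it is a proof by contradiction. Assuming $\dpr(p\restriction AI)\geq\kappa$ but $\dpr(p)<\kappa$, one first uses $\dpr(p)<\kappa$ to extract, by compactness, a single formula $\sigma(x,c)\in p$ together with finitely many formulas/sequences $(\varphi_k,I_k)_{k<l}$ encoding a finite obstruction: no realization of $\sigma(x,c)$ can exhibit the relevant sub-pattern with any $c$-mutually indiscernible sequences of the same type. Then one reduces (harmlessly, since otherwise $\dpr(p)=\infty$ and there is nothing to prove) to the case that $p$ is NIP, obtains from Remark \ref{rem:pglobnip} a formula $\chi(x,d)\in p$ with $\chi(x,d)\land\sigma(x,u)$ NIP, arranges $\chi(a_i,d)$ for all $i$, and then runs the same inductive construction along the Morley sequence $I=(a_i)_{i<\omega}$ as in the NIP case, producing $c_n\equiv^{Ls}_A c$ with $\models\sigma(a_i,c_n)^{(i\,\mbox{even})}$ for $i<n$; compactness then contradicts the NIP of $\chi(x,d)\land\sigma(x,z)$. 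The role of $I$ is thus an inductive Lascar-homogeneity argument against a fixed compactness-extracted obstruction, not a vehicle for transferring realizations. Your extraction-to-$B$-indiscernibility step, while not incorrect, does not address this, and the remaining gap is exactly the content of the theorem. (You also do not address the step from $I$ of order type $\omega$ to an arbitrary infinite $I$, which the paper handles separately via Lemma \ref{lem:expanding Morely sequence} and a further compactness argument, but this is minor compared to the main gap.)
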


In Section \ref{sec:applications} we give some applications. For instance, we show the equivalence of Lascar and Kim-Pillay splitting for NIP types (for types over models which are saturated enough); this is Corollary \ref{cor:Lascar=KP}. Also, from the theorems above we infer a sufficient condition for a positive answer to Question \ref{question} which also ensures preservation of dp-rank, namely that every Morley sequence generated by some global non-forking extension of $p$ over $A$ is totally indiscernible; this is Corollary \ref{cor:niptotind}. As a particular consequence we have a positive answer and preservation of dp-rank for global generically stable NIP types in Corollary \ref{cor:generically stable}. Using a variant of the proof of the theorems above and some extra argument we can recover the stable case (\textit{i.e.}, the main result of \cite{Gen}); this is Corollary \ref{cor:stable case, first proof}. Finally, with analogous techniques we give another more direct proof for the stable case (see Theorem \ref{the:resultstab} and Corollary \ref{cor:resultstab2}), without making use of generically stable types, which is a key tool in \cite{Gen}. All of this is done in Section \ref{sec:results}.

In order to generate a counterexample to Question \ref{question}, we introduce in Section \ref{sec:a general construction} a general method of constructing the theory $T^*$ of meet-trees whose open cones are models of a fixed theory $T$ with quantifier-elimination in a finite relational language. In fact, $T^*$ is NIP iff $T$ is (Corollary \ref{cor:T* NIP}). In Section \ref{sec: the type}, we define for every model $M \models T^*$ a partial type $\pi_T(x)$ over $M$ and one extra point which is finitely satisfiable in $M$, is NIP and moreover dp-minimal, distal and co-distal (see Definition \ref{def:distal and co-distal}), but its restriction to $M$ could have IP or positive dp-rank depending on the theory $T$. This construction shows that the addition of Morley sequences in the theorems above was necessary by taking $T$ to be the theory of the random graph (in which case $T^*$ is dubbed \emph{DTR}) or the theory of two independent equivalence relations (which shows this addition was necessary even for NIP theories). It also shows that restricting our attention to dp-minimal or distal types cannot help. 

If $T$ is a theory where there is a counterexample to Question \ref{question}, then $T$ must have SOP and IP (see Remark \ref{rem:nsop}). In particular $T$ must have TP$_1=\,$ SOP$_2$. Thus in terms of classification theory, the best we could hope for is an NTP$_2$ theory. Among those, inp-minimal, \textit{i.e.} NTP$_2$ of burden $1$, are tamest. Indeed in Section \ref{sec:classifying DTR} we prove that DTR is inp-minimal. 

In section \ref{sec:non-distal example}, we build a variant of DTR showing that there is a counterexample for Question \ref{question} also when the type is non-distal. 

We end this paper with some open questions, see Section \ref{sec:open questions}. 

The work in this paper involves concepts such as forking, Lascar-strong types, (Lascar-)invariance, products of types,... and techniques such as extraction of indiscernible (using Ramsey and compactness, \textit{i.e.} the so-called Standard Lemma \cite[Lemma 5.1.3]{TZ}), which we assume are well-known (but still we recall some of them). See e.g., \cite{Cas14} or \cite{Sim} for more details. We assume the reader is familiar with the standard conventions and notations of model theory. In particular, $T$ denotes a complete (first-order) theory in some language $\Ll$. $\mathfrak{C}$ denotes the monster model for $T$ (a large, saturated model). All sets $A$, $B$, ... and models $M$, $N$, ... considered are contained in the monster, and global types are types over $\mathfrak{C}$. We use Greek characters $\pi$, $\Sigma$, ... for partial types, $\varphi$, $\psi$, $\alpha$, $\beta$, ... for formulas and Latin characters $a$, $b$, $c$, ... for (possible infinite) tuples, we also write $a\in A$ when $a$ is a tuple with the obvious meaning. We write $ \models \varphi(a) $ to mean that $ \mathfrak{C} \models \varphi(a) $. For a property $t$, $\varphi^{(t)}$ means $ \varphi $ if $ t $ holds and $ \neg \varphi $ otherwise. In Section \ref{sec:counterexample} we use the symbol $\mathop{\&}$ for the logical conjunction because $\wedge$ is employed in the language of the structures considered there. 


\subsubsection*{Acknowledgments}

The authors would like to thank Enrique Casanovas for suggesting this question, Daniel Palac\'in for some conversations about the problem and Rosario Mennuni by pointing us Remark \ref{rem:rdos}. The first author would also like to thank Enrique Casanovas and Rafel Farr\`e for their comments and suggestions.

\section{Preliminaries}

\subsection{Stable types}


\begin{defi} A partial type $\pi(x)$ over $A$ is \emph{stable} if every extension $p\in S(B)$, over every $B\supseteq A$, is definable. \end{defi} 

Note that any extension of a stable type is stable. 

There are many equivalences of stability for types, see \textit{e.g.} \cite{Gen}, \cite[Section 10]{BCNS11}. We focus on those which are useful for our purposes. 
\begin{defi} We say that $\varphi(x,y)\in\mathcal{L}$ has \emph{OP} (the \emph{order property}) if there are sequences $(a_i)_{i<\omega}$ and $(b_i)_{i<\omega}$ such that $\models\varphi(a_i,b_j)^{(i<j)}$  for all $i,j<\omega$.

    A formula $\varphi(x,y)$ without the order property is called \emph{stable}.
    
    A formula $\varphi(x,y)$ has \emph{SOP} (the \emph{strict order property}) if it defines a partial order with infinite chains. A theory is \emph{NSOP} if no formula has SOP. 

 \end{defi}


The following characterization is well-known but we provide a proof.
\begin{lem} \phantomsection \label{lem:opform} \begin{enumerate}
\item $\varphi(x,y)$ has OP if and only if there is an indiscernible sequence $(a_i)_{i\in\mathbb{Z}}$ and a tuple $b$ such that $\models\varphi(a_i,b)^{(i\geq 0)}$ for all $i \in \mathbb{Z}$. 
\item $\varphi(x,y)$ has OP if and only if there is an indiscernible sequence $(b_i)_{i\in\mathbb{Z}}$ and a tuple $a$ such that $\models\varphi(a,b_i)^{(i\geq 0)}$ for all $i \in \mathbb{Z}$. 
\end{enumerate} \end{lem}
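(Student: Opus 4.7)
The plan is to prove part (1) in detail; part (2) will follow by the symmetric argument. Both directions reduce to one use of the Standard Lemma in $(\Rightarrow)$ and one use of homogeneity of $\Cc$ in $(\Leftarrow)$, modulo some off-by-one bookkeeping.

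For the forward direction of (1): starting from OP-witnesses $(a_i)_{i<\omega}$ and $(b_i)_{i<\omega}$, I apply the Standard Lemma to the sequence of pairs $(a_i b_i)_{i<\omega}$ and, via compactness, extract an indiscernible sequence $(\bar a_i \bar b_i)_{i \in \mathbb{Z}}$ with the same EM-type, so that $\models \varphi(\bar a_i, \bar b_j)^{(i<j)}$ for all $i, j \in \mathbb{Z}$. Setting $b := \bar b_0$ would give $\models \varphi(\bar a_i, b)^{(i<0)}$, which is the wrong half; reversing and shifting by one via $c_i := \bar a_{-i-1}$ produces an indiscernible sequence $(c_i)_{i \in \mathbb{Z}}$ (reversal preserves order-indiscernibility) satisfying $\models \varphi(c_i, b)^{(i \geq 0)}$, as required.

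For the backward direction of (1): given an indiscernible $(a_i)_{i \in \mathbb{Z}}$ and $b$ with $\models \varphi(a_i, b)^{(i \geq 0)}$, I use homogeneity of $\Cc$. For each $n \in \mathbb{Z}$ the sequences $(a_i)_{i \in \mathbb{Z}}$ and $(a_{i+n})_{i \in \mathbb{Z}}$ have the same type (both are indiscernible with the same EM-type, differing only by an order-automorphism of $\mathbb{Z}$), so there is $\sigma_n \in \mathrm{Aut}(\Cc)$ with $\sigma_n(a_i) = a_{i+n}$. The elements $b_n := \sigma_n(b)$ then satisfy $\models \varphi(a_j, b_n)^{(j \geq n)}$. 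Setting $A_i := a_{-i-1}$ and $B_j := b_{-j}$ for $i, j < \omega$ (both available since the sequence is $\mathbb{Z}$-indexed), a direct check gives $\varphi(A_i, B_j)$ iff $i < j$, witnessing OP for $\varphi$.

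Part (2) is symmetric: in $(\Rightarrow)$ I use the same extracted indiscernible but take $a := \bar a_0$ and $c_j := \bar b_{j+1}$; in $(\Leftarrow)$ I shift $(b_i)$ instead of $(a_i)$, conjugate $a$ to $a_n$ with $\models \varphi(a_n, b_j)^{(j \geq n)}$, and then reindex via $A_i := a_{i+1}$, $B_j := b_j$, where no negative indices are even needed. The only real nuisance in the whole argument is the off-by-one shift required to convert between the threshold $i \geq 0$ of the lemma and the strict inequality $i < j$ in the definition of OP; no deeper difficulty arises.
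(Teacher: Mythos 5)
Your proof is correct and follows essentially the same strategy as the paper's: the forward direction applies Ramsey/the Standard Lemma to the combined sequence of pairs, extends to $\mathbb{Z}$ by compactness, and re-indexes to fix the threshold; the backward direction produces the shifted conjugates $b_n$ by applying automorphisms of $\Cc$ that shift the indiscernible sequence, then re-indexes to recover OP. The only difference is that you spell out part (2) explicitly (the paper only proves (1)), and your off-by-one bookkeeping uses a slightly different shift than the paper's ($c_i := \bar a_{-i-1}$, $b := \bar b_0$ versus $a'_i := a_{-i}$, $b := b_1$), but both check out.
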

\begin{proof}  We show $(1)$. Assume we have $I=(a_i)_{i<\omega}$ and $J=(b_i)_{i<\omega}$ such that $\models\varphi(a_i,b_j)^{(i<j)}$ for all $i,j<\omega$. By Ramsey we may assume that $I$ and $J$ are indiscernible sequences. By compactness we may assume $I$ and $J$ indexed in $\mathbb{Z}$. Now take $a'_i=a_{-i}$ for every $i\in\mathbb{Z}$ and $b=b_1$. For the converse, note that for every $k\in\mathbb{Z}$, $(a_i)_{i\in\mathbb{Z}}\equiv (a_{i+k})_{i\in\mathbb{Z}}$. Applying an automorphism there is $c_k$ such that $(a_{i+k})_{i\in\mathbb{Z}} c_k\equiv (a_i)_{i\in\mathbb{Z}} b$ and so $\models\varphi(a_{i+k},c_k)^{(i\geq 0)}$, \textit{i.e.} $\models\varphi(a_i,c_k)^{(i\geq k)}$. Let $b_i=c_{-i+1}$ and $a'_i=a_{-i}$ for every $i<\omega$ and note that $\models\varphi(a'_i,b_j)^{(i<j)}$ for all $i,j<\omega$.\end{proof}

The following fact follows from the previous lemma (or its proof) in conjunction with the results in \cite[Section 10]{BCNS11}. 

\begin{fact}\label{fac:pstable} The following are equivalent for a partial type $\pi(x)$ over $A$:\begin{enumerate}
\item $\pi(x)$ is stable.
\item There are no sequences $(a_i)_{i<\omega}$, $(b_i)_{i<\omega}$, with $a_i\models\pi(x)$ and no formula $\varphi(x,y)\in\mathcal{L}$ such that $\models\varphi(a_i,b_j)^{(i<j)}$ for all $i,j<\omega$.
\item There is no $A$-indiscernible (or even $\emptyset$-indiscernible (see Remark \ref{rem:over emptyset})) sequence $(a_i)_{i\in\mathbb{Z}}$ of realizations of $\pi(x)$, no formula $\varphi(x,y)\in\mathcal{L}$ and no tuple $b$ such that $\models\varphi(a_i,b)^{(i\geq 0)}$ for all $i \in \mathbb{Z}$.
\item There is no $A$-indiscernible sequence $(a_i)_{i\in\mathbb{Z}}$, formula $\varphi(x,y)\in\mathcal{L}$ and no tuple $b\models\pi(x)$ such that $\models\varphi(b,a_i)^{(i\geq 0)}$ for all $i \in \mathbb{Z}$.
\end{enumerate} \end{fact}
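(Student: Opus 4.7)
The equivalence $(1)\Leftrightarrow(2)$ is a standard characterization of stability of a partial type via the non-existence of the order property witnessed by realizations of $\pi$; I would simply invoke \cite[Section 10]{BCNS11} for it, together with the (not completely obvious) fact that one may equivalently require the realizations of $\pi$ on either side of the order property --- a symmetric reformulation which is also among the results of that section and will be needed for the equivalence with (4).

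For $(2)\Rightarrow(3)$, I would start from witnesses $(a_i)_{i<\omega}\models\pi$ and $(b_i)_{i<\omega}$ of failure of (2) and apply the Standard Lemma over $A$ (Ramsey plus compactness) to extract an $A$-indiscernible sequence $(a'_i,b'_i)_{i\in\mathbb{Z}}$ with the same EM-type; since $\pi$ is over $A$, $A$-indiscernibility preserves $a'_i\models\pi$. Then, imitating the first half of the proof of Lemma \ref{lem:opform}(1), I would set $b:=b'_1$ and reindex via $i\mapsto -i$, obtaining an $A$-indiscernible sequence of realizations of $\pi$ together with a single tuple $b$ such that $\models\varphi(a'_i,b)^{(i\geq 0)}$. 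Conversely, for $(3)\Rightarrow(2)$ I would mimic the second half of that proof: given an $A$-indiscernible $(a_i)_{i\in\mathbb{Z}}$ of realizations of $\pi$ and $b$ with $\models\varphi(a_i,b)^{(i\geq 0)}$, $A$-indiscernibility supplies for each $k\in\mathbb{Z}$ an $A$-automorphism producing $c_k$ with $\models\varphi(a_i,c_k)^{(i\geq k)}$, and relabelling recovers witnesses for the failure of (2) (with $a'_k\models\pi$ automatically).

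For $(2)\Leftrightarrow(4)$ the argument is parallel, but the realizations of $\pi$ now sit on the single-tuple side rather than on the sequence side. I would first invoke the symmetric reformulation of (2) granted by \cite[Section 10]{BCNS11} to convert (2) into its ``dual'' in which $\pi$ is placed on the $y$-side, and then carry out the same Standard Lemma extraction followed by the bookkeeping of Lemma \ref{lem:opform}(2) to reach (4), and similarly back. The observation in parenthesis in (3), that $\emptyset$-indiscernibility already suffices, comes for free from the extraction step: applying the Standard Lemma over $\emptyset$ rather than over $A$ still produces a sequence that violates stability.

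The main obstacle is precisely the side-swap of $\pi$ needed for $(2)\Leftrightarrow(4)$: it is invisible to Lemma \ref{lem:opform}, which concerns only bare formulas and makes no reference to a partial type, and must therefore be imported from \cite[Section 10]{BCNS11}. Once this symmetry is granted, everything else is a routine combination of Ramsey/compactness (the Standard Lemma) with the automorphism argument already carried out in the proof of Lemma \ref{lem:opform}.
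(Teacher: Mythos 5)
Your proposal follows the paper's intended route: defer $(1)\Leftrightarrow(2)$ (and its left--right symmetric reformulation) to \cite[Section 10]{BCNS11}, and derive the indiscernible formulations (3) and (4) by the Ramsey/compactness extraction and the automorphism-shifting bookkeeping already carried out in the proof of Lemma \ref{lem:opform}; the parenthetical $\emptyset$-indiscernibility point is handled essentially as in Remark \ref{rem:over emptyset}. One small slip worth fixing: the labels in the $(2)\Leftrightarrow(3)$ paragraph are interchanged --- starting from a failure of (2) and extracting a witness to the failure of (3) is the contrapositive of $(3)\Rightarrow(2)$, not of $(2)\Rightarrow(3)$ --- though both directions are in fact present.
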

Thus, if $\pi$ is unstable, \textit{i.e.} not stable, there is $\varphi(x,y)$ witnessing OP for $\pi$, meaning any of the equivalent points above.
\begin{rk} \label{rem:over emptyset}
    In Fact \ref{fac:pstable} (3), a general reason we could settle for $\emptyset$-indiscernibility is because given an $\emptyset$-indiscernible sequence $I$ as in there, we can extract from it (by Ramsey and compactness) an $A$-indiscernible sequence $I'$ so that $I \equiv I'$ and still all elements in $I'$ realize $\pi$, and applying an automorphism we can get some tuple $b'$ so that $b'I'\equiv bI$. 

    This asymmetry between (3) and (4) also appears in the NIP case, see Lemma \ref{lem:pnip} below. 
\end{rk} 
\begin{rk}\label{rem:pglobstable} It follows by compactness that the following statement also characterizes the stability of a partial type $\pi(x)$ closed under conjunctions: for each formula, there is another one in the type which prohibits OP. More precisely, the following are equivalent: \begin{enumerate}
\item $\pi$ is stable.  
\item For every $\varphi(x,y)\in\mathcal{L}$ there is $\alpha_{\varphi}(x,c)\in \pi$ such that $\varphi(x,y) \land \alpha_{\varphi}(x,c)$ is stable (as a formula with parameters) which (by Lemma \ref{lem:opform} and indiscernibility) is equivalent to saying that there is no $c$-indiscernible (or even $\emptyset$-indiscernible) sequence $(a_i)_{i\in\mathbb{Z}}$ and a tuple $b$ such that $a_i\models \alpha_{\varphi}(x,c)$ and $\models\varphi(a_i,b)^{(i\geq 0)}$ for all $i \in \mathbb{Z}$. 
\end{enumerate}
Note that by Lemma \ref{lem:opform}, $\varphi(x,y) \land \alpha_{\varphi}(x,c)$ is stable iff there is no $c$-indiscernible sequence $(b_i)_{i\in\mathbb{Z}}$ and no tuple $a$ such that $a\models \alpha_{\varphi}(x,c)$ and $\models\varphi(a,b_i)^{(i\geq 0)}$ for all $i \in \mathbb{Z}$.
\end{rk} 
\begin{rk} \label{rem:stable type restriction to a small set}
    From this remark it follows immediately that for any stable type $p \in S(A)$ there is some $B\subseteq A$ such that $p \restriction B$ is stable and $|B|\leq |T|$.
\end{rk}
\begin{fact}\label{fac:stableacl} \cite[Lemma 1.5]{Gen} Let $p\in S(A)$, then $p$ is stable if and only if any (some) extension to $acl^{eq}(A)$ is stable.\end{fact}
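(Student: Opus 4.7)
The forward direction is immediate from the definition of stability for partial types: every extension of a stable type is again stable, so any $q \in S(acl^{eq}(A))$ extending $p$ is stable. For the ``some vs.\ any'' equivalence, note that any two extensions $q, q' \in S(acl^{eq}(A))$ of $p$ are $\mathrm{Aut}(\mathfrak{C}/A)$-conjugate: pick $a \models q$ and $a' \models q'$; since $a \equiv_A a'$, any $A$-automorphism sending $a$ to $a'$ sends $q$ to $q'$. Stability being automorphism-invariant, ``some'' and ``every'' extension to $acl^{eq}(A)$ being stable are equivalent.

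The content is the contrapositive of the reverse direction: assuming $p$ is unstable, I want to produce an unstable extension of $p$ to $acl^{eq}(A)$. By Fact \ref{fac:pstable}(2) fix $(a_i)_{i<\omega}$ with $a_i \models p$, $(b_i)_{i<\omega}$, and $\varphi(x,y) \in \mathcal{L}$ with $\models\varphi(a_i,b_j)^{(i<j)}$. Using compactness, lengthen to $(a_i,b_i)_{i<\kappa}$ for $\kappa$ large enough relative to $|acl^{eq}(A)|+|T|$, still satisfying $\models\varphi(a_i,b_j)^{(i<j)}$ throughout. Applying the Standard Lemma over $acl^{eq}(A)$ to the pair-sequence $c_i = (a_i, b_i)$ yields an $acl^{eq}(A)$-indiscernible sequence $(c'_i)_{i<\omega} = (a'_i, b'_i)_{i<\omega}$ realizing the EM-type of the original over $acl^{eq}(A)$.

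Three properties then follow, each by observing that the relevant $L(acl^{eq}(A))$-formula lies in the EM-type: (i) each $a'_i$ realizes $p$, since every $\chi(x) \in p$ translates to $\chi(\pi_a(z))$ satisfied by every $c_i$; (ii) by $acl^{eq}(A)$-indiscernibility all $a'_i$ share a common type $q' \in S(acl^{eq}(A))$, which therefore extends $p$; and (iii) $\models\varphi(a'_i,b'_j)^{(i<j)}$, because on every increasing pair in the original both $\varphi(\pi_a(x),\pi_b(y))$ holds and $\varphi(\pi_a(y),\pi_b(x))$ fails, and $\neg\varphi(a_0,b_0)$ gives the diagonal instance. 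By Fact \ref{fac:pstable}(2) applied to $q'$, the data $(a'_i), (b'_i), \varphi$ witness the instability of $q'$, contradicting our assumption that every extension of $p$ to $acl^{eq}(A)$ is stable.

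The only subtlety I foresee is in (iii), namely verifying that both the positive instances ($i<j$) and negative instances ($i \geq j$) of the alternation are captured simultaneously by the EM-type extraction; this is unproblematic since for each ordered pair of positions the required sign of $\varphi$ is constant on all increasing pairs of pairs from the original long sequence, and hence survives the passage to the extracted indiscernible sequence.
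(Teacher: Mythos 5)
Your proof is correct, but it takes a longer route than what the paper has in mind, and one step is superfluous. The paper's suggestion (stated right after the fact) is to use Fact~\ref{fac:pstable}(3) together with the observation that any $A$-indiscernible sequence is automatically $acl^{eq}(A)$-indiscernible: if $p$ is unstable, then by (3) there is an $A$-indiscernible sequence $(a_i)_{i\in\mathbb{Z}}$ of realizations of $p$, a formula $\varphi$, and a tuple $b$ with $\models\varphi(a_i,b)^{(i\geq 0)}$; that sequence is already $acl^{eq}(A)$-indiscernible, so all the $a_i$ share a type $q$ over $acl^{eq}(A)$, which extends $p$, and (3) applied to $q$ witnesses instability. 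You instead start from Fact~\ref{fac:pstable}(2), which hands you non-indiscernible sequences, and then \emph{re-extract} $acl^{eq}(A)$-indiscernibility via the Standard Lemma. This works — your accounting of what survives extraction (the positive, negative, and diagonal instances of $\varphi$, plus $a_i\models p$) is right, and your conjugacy argument for ``some vs.\ any'' is standard — but it effectively re-proves the passage from (2) to (3) over $acl^{eq}(A)$ rather than noticing that the indiscernible witness was already there and already works over $acl^{eq}(A)$.

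One small inefficiency: the ``lengthen to $\kappa$ large relative to $|acl^{eq}(A)|+|T|$'' step is unnecessary. The Ramsey-and-compactness version of the Standard Lemma extracts an $acl^{eq}(A)$-indiscernible sequence realizing the EM-type from \emph{any} infinite sequence; the requirement that the source be long only arises if one insists on finding the indiscernible sequence \emph{inside} the original via Erd\H{o}s--Rado, which you do not need here.
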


This fact also follows directly from the characterization in Fact \ref{fac:pstable} and the fact that in any theory, if $I$ is indiscernible over $A$ then it is also indiscernible over $acl(A)$. 

\subsection{NIP types}

Here we briefly go over the notions of NIP types and formulas. A general good reference for all things NIP is \cite{Sim}, but see also \cite{KSim} and \cite{BCNS11} for a  local approach (which is the one we take here).
   
\begin{defi} A formula $\varphi(x,y)\in\mathcal{L}$ has \emph{IP} (the \emph{independence property}) if there is a sequence $(a_i)_{i<\omega}$ such that for every $s\subseteq\omega$ the set of instances $\Set{\varphi(a_i,y)^{(i\in s)}}{i<\omega}$ is consistent.
 
We say that a partial type $\pi(x)$ \emph{has IP} if there is a formula $\varphi(x,y)\in\mathcal{L}$ which has IP as witnessed by a sequence $(a_i)_{i<\omega}$ of realizations $a_i\models\pi(x)$.
A formula or a partial type is \emph{NIP} if it does not have IP.  \end{defi} 
As in the stable case, note that any extension of an NIP type is itself NIP. Also, by Fact \ref{fac:pstable}, it follows immediately that any stable type is NIP. 

The following characterization of NIP formulas follows readily from \cite[Lemma 6.3]{Sim}. 
\begin{lem}\label{lem:ipform} Let $\varphi(x,y)\in\mathcal{L}$. Then $\varphi$ has IP if and only if one of the following equivalent conditions hold:\begin{enumerate}
\item There are sequences $(a_i)_{i<\omega}$, $(b_s)_{s\subseteq\omega}$ such that $\models\varphi(a_i,b_s)^{(i\in S)}$ for all $i<\omega$, $s\subseteq \omega$.
\item There are sequences $(a_s)_{s\subseteq\omega}$, $(b_i)_{i<\omega}$ such that $\models\varphi(a_s,b_i)^{(i\in S)}$ for all $i<\omega$, $s\subseteq \omega$.
\item There is an indiscernible sequence $(a_i)_{i<\omega}$ and a tuple $b$ such that $\models\varphi(a_i,b)^{(i\,\mbox{even})}$ for all $i<\omega$.
\item There is an indiscernible sequence $(b_i)_{i<\omega}$ and a tuple $a$ such that $\models\varphi(a,b_i)^{(i\,\mbox{even})}$ for all $i<\omega$.
\end{enumerate} \end{lem}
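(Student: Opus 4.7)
The plan is to establish the cycle $\text{IP} \Leftrightarrow (1) \Leftrightarrow (3) \Leftrightarrow (4) \Leftrightarrow (2)$, relying on \cite[Lemma 6.3]{Sim} for the duality step that swaps the roles of $x$ and $y$. First, the equivalence of the defining condition of IP with (1) is essentially tautological: the consistency of $\{\varphi(a_i,y)^{(i\in s)} : i<\omega\}$ provides, by compactness, a realizer $b_s$ for each $s\subseteq\omega$, and conversely the existence of such $b_s$ directly witnesses the required consistency.

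For the symmetric pair (1) $\Leftrightarrow$ (3), I would first rephrase the IP condition as a statement about the EM-type of $(a_i)_{i<\omega}$: for every $n<\omega$ and every $s\subseteq n$, the formula $\exists y\,\bigwedge_{i<n}\varphi(x_i,y)^{(i\in s)}$ belongs to the (limit) EM-type of the sequence. The Standard Lemma then produces an indiscernible sequence $(a'_i)_{i<\omega}$ inheriting this EM-type, and compactness yields a single $b$ realizing the "alternating" partial type $\{\varphi(a'_i,y)^{(i\text{ even})} : i<\omega\}$, giving (3). Conversely, given $(a_i)_{i<\omega}$ indiscernible and $b$ with $\varphi(a_i,b)^{(i\text{ even})}$, for any finite pattern $s_0\subseteq n$ I would pick indices $j_0<\ldots<j_{n-1}$ with $j_k$ even exactly when $k\in s_0$; indiscernibility gives $(a_0,\ldots,a_{n-1})\equiv(a_{j_0},\ldots,a_{j_{n-1}})$, and the corresponding automorphism transports $b$ to a tuple $b'$ realizing the prescribed pattern on $(a_0,\ldots,a_{n-1})$. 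Compactness then extends this to arbitrary $s\subseteq\omega$, yielding (1). The equivalence (2) $\Leftrightarrow$ (4) follows by the very same argument after exchanging the roles of $x$ and $y$.

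The remaining and genuinely non-trivial step is the symmetry (1) $\Leftrightarrow$ (2), equivalently (3) $\Leftrightarrow$ (4), asserting that IP is preserved when one swaps the roles of $x$ and $y$ in $\varphi(x,y)$. This is the duality of the independence property, closely tied to the fact that the primal VC dimension of $\varphi$ is finite iff the dual VC dimension is. Rather than redo this combinatorial argument, I would invoke \cite[Lemma 6.3]{Sim} as indicated in the statement: the characterization of NIP via boundedly many alternations of $\varphi(a_i,b)$ on indiscernible sequences is manifestly symmetric in $x$ and $y$, making (3) and (4) interchangeable. Thus the main obstacle, the $x$/$y$ duality, is outsourced to the cited reference, while the rest of the proof reduces to routine Ramsey-plus-compactness and automorphism manipulations.
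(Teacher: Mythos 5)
Your proof is correct and takes the same approach as the paper, which simply cites \cite[Lemma 6.3]{Sim} for the nontrivial $x$/$y$ duality; the Ramsey-plus-compactness and automorphism manipulations you supply for IP $\Leftrightarrow$ (1) $\Leftrightarrow$ (3) and (2) $\Leftrightarrow$ (4) are the standard ones. One small imprecision: the alternation-on-indiscernibles characterization is not itself ``manifestly symmetric'' in $x$ and $y$ (it privileges indiscernibility in one of the two variables) --- that symmetry is precisely the content of Simon's Lemma 6.3 on primal versus dual VC-dimension, which you do correctly invoke as the backbone of the argument.
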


 
The next lemma follows from the definition of an NIP type and Lemma \ref{lem:ipform}, a proof can be found in \cite[Claim 2.1]{KSim} and \cite[Lemma 8.3 and Proposition 8.4]{BCNS11}.
\begin{lem}\label{lem:pnip} The following are equivalent for a partial type $\pi(x)$ over $A$:\begin{enumerate}
\item $\pi(x)$ is NIP: there are no sequences $(a_i)_{i<\omega}$, $(b_s)_{s\subseteq\omega}$, with $a_i\models\pi(x)$ and no formula $\varphi(x,y)\in\mathcal{L}$ such that $\models\varphi(a_i,b_s)^{(i\in S)}$ (this is the definition).
\item There are no sequences $(a_i)_{i<\omega}$, $(b_s)_{s\subseteq\omega}$, with $b_s\models\pi(x)$ and no formula $\varphi(x,y)\in\mathcal{L}$ such that $\models\varphi(b_s,a_i)^{(i\in S)}$.
\item There are no $A$-indiscernible (or even $\emptyset$-indiscernible) sequence $(a_i)_{i<\omega}$ of realizations of $\pi(x)$, no tuple $b$ and no formula $\varphi(x,y)\in\mathcal{L}$ such that $\models\varphi(a_i,b)^{(i\,\mbox{even})}$  for all $i<\omega$. 
\item There are no $A$-indiscernible sequence $(a_i)_{i<\omega}$, no tuple $b\models\pi(x)$ and no formula $\varphi(x,y)\in\mathcal{L}(A)$ such that $\models\varphi(b,a_i)^{(i\,\mbox{even})}$ for all $i<\omega$.
\end{enumerate} \end{lem}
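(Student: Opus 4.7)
My plan is to cycle through the implications $(1) \Leftrightarrow (3) \Leftrightarrow (4) \Leftrightarrow (2)$, invoking Lemma \ref{lem:ipform} for the underlying formula-level content and the Standard Lemma (Ramsey plus compactness) for extraction of indiscernible sequences. The background fact I use repeatedly is that realization of $\pi$ depends only on the complete type over $A$, so any $A$-indiscernible sequence extracted from a sequence of realizations of $\pi$ (with the same EM-type over $A$) still consists of realizations of $\pi$.

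For $(1) \Leftrightarrow (3)$ (and symmetrically $(2) \Leftrightarrow (4)$): if $(3)$ fails, witnessed by an $A$-indiscernible $(a_i)_{i<\omega}$ of realizations of $\pi$, a tuple $b$, and a formula $\varphi$ with $\models \varphi(a_i, b)^{(i\ \text{even})}$, then for each finite $s \subseteq n$ I would pick indices $i_0 < \cdots < i_{n-1}$ such that $i_j$ is even iff $j \in s$; an $A$-automorphism carrying $(a_{i_j})_j$ to $(a_j)_j$ (which exists by $A$-indiscernibility) produces some $b_s$ with $\models \varphi(a_j, b_s)^{(j \in s)}$ for $j < n$, and compactness assembles these into an $\omega$-indexed family as required in $(1)$. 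Conversely, if $(1)$ fails, the Standard Lemma extracts from $(a_i)$ an $A$-indiscernible $(a'_i)$ with the same EM-type over $A$, which still realizes $\pi$; the partial type $\{\varphi(a'_i, y)^{(i\ \text{even})} : i < \omega\}$ is then finitely satisfiable in the original $(b_s)$, producing the required $b$. The same argument yields $(2) \Leftrightarrow (4)$, once one absorbs the parameters of $\mathcal{L}(A)$-formulas into the tuples $a_i$ to pass between $\mathcal{L}$ and $\mathcal{L}(A)$.

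The bridge $(1) \Leftrightarrow (2)$ (equivalently $(3) \Leftrightarrow (4)$) rests on the formula-level VC-duality encoded in Lemma \ref{lem:ipform}(1) $\Leftrightarrow$ (2). Given witnesses to $\neg(1)$ via $(a_i)_{i<\omega} \models \pi$ (we may assume it is $A$-indiscernible after extraction) and $(b_s)_{s \subseteq \omega}$, the combinatorial dualization yields an infinite subsequence $(b_{s_k})_{k<\omega}$ shattered by the $a_i$'s themselves: for each $t \subseteq \omega$ there is some $a_{i(t)}$ (still realizing $\pi$) with $\models \varphi(a_{i(t)}, b_{s_k})^{(k \in t)}$. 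Relabeling $b'_t := a_{i(t)}$ and $a'_k := b_{s_k}$ with the same formula $\varphi$ exhibits $\neg(2)$, and the reverse direction is symmetric. The main obstacle throughout is precisely this dualization step: swapping the roles of the shattered and shattering sides while keeping the $\pi$-realizing family on the $x$-slot. The key move is to draw the new shattering witnesses from within the original $\pi$-realizing sequence $(a_i)$ rather than introducing fresh arbitrary tuples; this is what upgrades the formula-level swap of Lemma \ref{lem:ipform} to the genuinely type-level equivalence needed here.
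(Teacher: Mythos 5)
Your overall architecture---cycling through $(1)\Leftrightarrow(3)$, $(3)\Leftrightarrow(4)$, $(4)\Leftrightarrow(2)$ using Lemma~\ref{lem:ipform} plus extraction of indiscernibles---is the right one, and is essentially what the references cited by the paper (which gives no proof of its own) carry out. The $(1)\Leftrightarrow(3)$ and $(2)\Leftrightarrow(4)$ legs are fine: passing between an arbitrary sequence with a family $(b_s)_{s\subseteq\omega}$ and an indiscernible sequence with a single tuple alternating on parity is a standard Ramsey-plus-compactness argument, and you handle the $\mathcal{L}$ versus $\mathcal{L}(A)$ discrepancy correctly by absorbing parameters.

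There is, however, a genuine error in the way you state the dualization step $(1)\Leftrightarrow(2)$. You assert that from the original array one gets an infinite subsequence $(b_{s_k})_{k<\omega}$ such that \emph{for every} $t\subseteq\omega$ there is some $a_{i(t)}$ \emph{from the original sequence} with $\models\varphi(a_{i(t)},b_{s_k})^{(k\in t)}$. This is literally impossible: for $t\neq t'$ the indices $i(t),i(t')$ must differ (pick $k\in t\triangle t'$), so you would need $2^{\aleph_0}$ distinct indices inside $\omega$. The VC-duality you invoke is a statement about finite patterns only. The correct version of your key move is: for each $m$, set $s_k=\{\,i<2^m:\text{the $k$-th bit of $i$ is }1\,\}$ for $k<m$; then for every $t\subseteq m$ the element $a_{i(t)}$, where $i(t)<2^m$ encodes $t$ in binary, still realizes $\pi$ and satisfies $i(t)\in s_k\iff k\in t$. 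This gives, for each $m$, a finite pattern witnessing $\neg(2)$, with the $\pi$-realizing tuples in the first slot; compactness then produces the full $\omega$-indexed families (consisting of new tuples, not the original $a_i$'s and $b_s$'s). Your concluding observation---that the crucial point is to draw the dual shattering witnesses from within the $\pi$-realizing side---is exactly right and is what makes this type-level rather than formula-level; the execution just needs to be routed through finite approximations before compactness, rather than claimed directly on the infinite array.
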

As in the stable case, if $\pi$ has IP then there is $\varphi(x,y)$ and some sequence witnessing IP for $\pi$, meaning any of the equivalent points above. 
Analogous to Remark \ref{rem:pglobstable} we have:
\begin{rk}\label{rem:pglobnip} By compactness, for a partial type $\pi(x)$ closed under conjunctions, the following are equivalent:\begin{enumerate}
\item $\pi$ is NIP.
\item For every $\varphi(x,y)\in\mathcal{L}$ there is $\alpha_{\varphi}(x,c)\in \pi$ such that $\varphi(x,y) \land \alpha_{\varphi}(x,c)$ is NIP (as a formula with parameters) which means (by Lemma \ref{lem:ipform} and indiscernibility) that there is no $c$-indiscernible (or even $\emptyset$-indiscernible) sequence $(a_i)_{i<\omega}$ and tuple $b$ such that  $a_i\models\alpha_{\varphi}(x,c)$ and $\models \varphi(a_i,b)$ iff $i$ is even. 
\end{enumerate}
Note that By Lemma \ref{lem:ipform}, $\varphi(x,y) \land \alpha_{\varphi}(x,c)$ is NIP iff there is no $c$-indiscernible sequence $(b_i)_{i<\omega}$ and no tuple $a$ with $a\models\alpha_{\varphi}(x,c)$ and $\models \varphi(a,b_i)$ iff $i$ is even.
\end{rk} 

The analogous to Remark \ref{rem:stable type restriction to a small set} holds, \textit{i.e.} it follows from the remark above that if $p \in S(A)$ is NIP, then some restriction of $p$ to a set of size $\leq |T|$ is already NIP.


\begin{fact}\label{fac:tupnip} \cite[Theorem 4.11]{KOU} Let $p\in S(A)$ be an NIP type, and for $n<\omega$, let $a_0,a_1,\dots a_{n-1}$ be such that $a_i\models p$ for every $i<n$. Then the type $\tp(a_0,a_1,\dots a_{n-1}/A)$ is NIP.\end{fact}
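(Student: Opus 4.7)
My plan is to proceed by induction on $n$. The base case $n = 1$ is immediate since $\tp(a_0/A) = p$ is NIP by hypothesis. For the inductive step, the most natural route is via sub-additivity of dp-rank, which is the main theme of \cite{KOU}: recall that a partial type is NIP if and only if its dp-rank is a cardinal (as opposed to $\infty$), and sub-additivity gives $\dpr(\bar a b/A) \leq \dpr(\bar a/A) + \dpr(b/A)$. Applying this with $\bar a = (a_0, \dots, a_{n-2})$ and $b = a_{n-1}$, the inductive hypothesis yields $\dpr(\bar a/A)$ bounded, and $\dpr(b/A) = \dpr(p)$ is bounded since $p$ is NIP; hence $\dpr((a_0,\dots,a_{n-1})/A)$ is bounded, i.e., NIP.

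Alternatively, one can give a more direct proof that reduces, via the same induction on $n$, to the following step: if $q_1 = \tp(\bar c/A)$ and $q_2 = \tp(d/A)$ are both NIP, then any complete extension $r = \tp(\bar c d/A)$ is NIP. Assuming for contradiction that $r$ has IP, Lemma~\ref{lem:pnip}(3) provides an $A$-indiscernible sequence $((\bar c_i, d_i))_{i<\omega}$ of realizations of $r$, a tuple $e$, and a formula $\varphi(\bar x, y, z)$ with $\models \varphi(\bar c_i, d_i, e)^{(i\,\text{even})}$ for all $i$. Each marginal sequence $(\bar c_i)_i$ and $(d_i)_i$ is $A$-indiscernible of realizations of an NIP type, so the NIP hypotheses bound alternations of $\varphi$ along each marginal when the other coordinate is held fixed as a parameter.

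The main obstacle is that the diagonal alternation couples the two coordinates: bounded row and column alternations of a $\{0,1\}$-valued function on $\omega \times \omega$ do not by themselves force bounded diagonal alternation, as one sees with small examples. To exploit the NIP hypotheses I would use compactness and iterated applications of the Standard Lemma to enrich $((\bar c_i, d_i))_i$ to a larger mutually $A$-indiscernible two-dimensional configuration $(\bar c^{\,*}_i, d^{\,*}_j)_{i,j<\omega}$ whose diagonal has the same $A$-EM-type as $((\bar c_i, d_i))_i$, and then — after relocating $e$ by an automorphism — extract from the infinite diagonal alternation of $\varphi$ a configuration that directly contradicts the NIP of $q_1$ or of $q_2$. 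Controlling this combinatorial extraction, which is in essence a local analogue of the sub-additivity of dp-rank, is the technical heart of \cite[Theorem 4.11]{KOU}.
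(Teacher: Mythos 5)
The paper states this as a black-box Fact, citing \cite[Theorem 4.11]{KOU} without giving its own proof, so there is no internal argument to compare against; I assess your proposal on its own terms.

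Your sub-additivity route has one small imprecision: the inequality should read $\dpr(\bar a b / A) \leq \dpr(\bar a / Ab) + \dpr(b/A)$, with the first term over the enlarged base $Ab$. This does not break the argument, since $\tp(\bar a/Ab)$ extends $\tp(\bar a/A)$ (NIP by the inductive hypothesis) and dp-rank is monotone under passing to extensions, per the remark after Definition \ref{def:dp-rank}. The more serious concern is logical priority: sub-additivity is the headline theorem of \cite{KOU} and is substantially deeper than the tuple fact; if the tuple fact is itself an ingredient in the proof of sub-additivity there, your first route is circular, and in any case it invokes a much heavier result than the statement requires. Your second route correctly identifies the real obstruction --- bounded marginal alternations do not by themselves bound the diagonal alternation --- and the plan (pass to a mutually $A$-indiscernible two-dimensional array with the right diagonal EM-type, then extract a violation of NIP for one marginal) has the right shape, but the extraction step is exactly the technical content and you explicitly leave it as a black box. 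As written, neither route is a self-contained proof.
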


\begin{rk}\phantomsection\label{rem:stsop}   \begin{enumerate}
\item If $\pi(x)$ is an NIP unstable type, then there is a formula in $\mathcal{L}$ with SOP.
\item In an NSOP theory all NIP types are stable.
\end{enumerate} \end{rk}
\begin{proof} (1) Since $\pi$ is unestable, there are indiscernible sequences $(a_i)_{i<\omega}$, $(b_N)_{N<\omega}$, with $a_i\models\pi$ and a formula $\varphi(x,y)$ such that $\models\varphi(a_i,b_N)^{(i<N)}$. Since $\pi$ is NIP, by definition, for $\varphi(x,y)$ and $(a_i)_{i<\omega}$ there is some $I\subseteq\omega$ such that $\Set{\varphi(a_i,y)^{(i\in I)}}{i<\omega}$ is inconsistent. Thus by compactness we have $n<\omega$ and a map $\eta$ as in the proof of \cite[Theorem 2.67]{Sim} and therefore we can apply the same proof. (2) follows directly from (1) by definition.\end{proof}

\subsection{Dp-rank}
\begin{defi} \label{def:dp-rank}
    Given a partial type $\Sigma(x)$ over $A$ and a cardinal $\kappa$, the \emph{dp-rank} of $\Sigma$ is less than $\kappa$, denoted by $\dpr(\Sigma)<\kappa$, if for every sequence of $A$-mutually indiscernible infinite sequences $(I_{\alpha})_{\alpha<\kappa}$, and for every $b\models\Sigma$ there is $\beta<\kappa$ such that $I_\beta$ is indiscernible over $bA$. We say $\dpr(\Sigma)\geq\kappa$ if the negation holds. We say that $\dpr(\Sigma) = \dpr(\Sigma')$ if for every cardinal $\kappa$, $\dpr(\Sigma)<\kappa$ iff $\dpr(\Sigma')<\kappa$ (and similarly we define $\dpr(\Sigma) \leq \dpr(\Sigma')$). If $\dpr(T) = \dpr(x=x) < 2$, we say that $T$ is \emph{dp-minimal}.
\end{defi}
See also \cite[Definition 1.1]{KSim} for a precise definition of dp-rank. 

\begin{rk} $A$ above can be replaced by any set containing the parameters of $\Sigma$ by \cite[Lemma 4.14]{Sim}. Note also that if $\Sigma' \supseteq \Sigma$ then $\dpr(\Sigma') \leq \dpr(\Sigma)$. \end{rk}
    
\begin{rk}\label{rem:ict pattern} \cite[Proposition 4.22]{Sim} For every partial type $\Sigma(x)$, $\dpr(\Sigma) \geq \kappa$ iff there is an \emph{ict-pattern} of depth $\kappa$, meaning a sequence of formulas $(\varphi_\alpha(x,y_\alpha))_{\alpha<\kappa}$, an array of tuples $(a_{i,\alpha})_{i<\omega,\alpha<\kappa}$ such that for any function $\eta:\kappa \to \omega$ there is some $b_\eta \models \Sigma$ such that $\varphi(b_\eta,a_{i,\alpha})$ holds iff $\eta(\alpha) = i$.  \end{rk}

\begin{rk} \label{rem:dp-rank infinite -> IP} \cite[Claim 2.1]{KSim} $\dpr(\Sigma) = \infty$ (i.e. $\geq \kappa$ for every cardinal $\kappa$) iff $\Sigma$ has IP.\end{rk}
    
We will use the following result which allows us to assume that the sequences witnessing the dp-rank are composed of realizations of the type (at the price of increasing the base). This was first proved in \cite{KaplanSimon} assuming NTP$_2$, and later proved in full generality by Chernikov in \cite[Theorem 5.3]{Ch}: 

\begin{fact} \label{fac:dp rank witnessing} Suppose that $\Sigma(x)$ is a partial type over $A$ with $\dpr(\Sigma(x)) \geq \kappa$. Then there is $A'$ containing $A$ such that $|A'| \leq \kappa + |A|$, a sequence $(I_\alpha)_{\alpha<\kappa}$ of infinite $A'$-mutually indiscernible infinite sequences such that for every element $b$ from any $I_\alpha$, $b \models \Sigma$, and some $d \models \Sigma$ such that for all $\alpha < \kappa$, $I_\alpha$ is not $A'd$ indiscernible. \end{fact}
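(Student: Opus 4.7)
The plan is to use the ict-pattern characterization of dp-rank from Remark \ref{rem:ict pattern} and to \emph{transpose} the pattern so that the sequences consist of $\Sigma$-realizations rather than ambient parameters. Starting from $\dpr(\Sigma)\geq\kappa$, Remark \ref{rem:ict pattern} supplies formulas $(\varphi_\alpha(x,y_\alpha))_{\alpha<\kappa}$, an array $(a_{i,\alpha})_{i<\omega,\alpha<\kappa}$, and for each $\eta:\kappa\to\omega$ a witness $b_\eta\models\Sigma$ with $\models\varphi_\alpha(b_\eta,a_{j,\alpha})$ iff $\eta(\alpha)=j$. Since it is the $b_\eta$'s that already realize $\Sigma$, the idea is to use them as the entries of the sought sequences, while relegating the $a_{i,\alpha}$'s to the role of providing the distinguishing formula.

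Concretely, I would fix a background function $\eta^*:\kappa\to\omega$, say $\eta^*\equiv 0$, set $d:=b_{\eta^*}\models\Sigma$, and for each $\alpha<\kappa$ and $i<\omega$ define $c_{i,\alpha}:=b_{\eta^*[\alpha\to i]}$, where $\eta^*[\alpha\to i]$ agrees with $\eta^*$ outside $\alpha$ and sends $\alpha$ to $i$. Each $c_{i,\alpha}$ realizes $\Sigma$, so $J_\alpha:=(c_{i,\alpha})_{i<\omega}$ is an $\omega$-sequence of $\Sigma$-realizations. The ict-pattern relations $\models\varphi_\alpha(c_{i,\alpha},a_{j,\alpha})\Leftrightarrow i=j$ and $\models\varphi_\alpha(d,a_{j,\alpha})\Leftrightarrow j=0$ already show that within each $J_\alpha$ the element $d$ is formulaically distinguished from the entries at positions $i\neq 0$ (via the $a$-parameters). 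Then I would apply an iterated Ramsey-plus-compactness extraction (the Standard Lemma, \cite[Lemma 5.1.3]{TZ}) to $(J_\alpha)_{\alpha<\kappa}$ over the base $A\cup\{d\}$, treating $d$ as a fixed marked parameter, to obtain $A'$-mutually indiscernible sequences $(I_\alpha)_{\alpha<\kappa}$ of $\Sigma$-realizations that still realize the EM-type of $(J_\alpha)_\alpha$ over $A\cup\{d\}$, with $A\subseteq A'$ and $|A'|\leq\kappa+|A|$.

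The main obstacle will be balancing two competing requirements in the extraction: it must be strong enough to give mutual $A'$-indiscernibility, yet the information that $d$ breaks each sequence has to survive without $d$ (or the breaking $a_{j,\alpha}$'s) being absorbed into the base $A'$. The resolution is to keep $d$ marked throughout as an extra parameter whose EM-interactions with the $J_\alpha$'s --- expressed via compound formulas such as $\psi_\alpha(x;d):=\exists y_\alpha(\varphi_\alpha(x,y_\alpha)\wedge\varphi_\alpha(d,y_\alpha))$, or appropriate $A'$-parameterized analogues that encode the same distinguishing pattern --- are preserved. After the extraction, that preserved formula continues to witness the non-$A'd$-indiscernibility of each $I_\alpha$. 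This careful coordination across $\kappa$ sequences, together with the cardinality bound on $A'$, is exactly the content of \cite[Theorem 5.3]{Ch}, which delivers the conclusion.
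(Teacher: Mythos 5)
The paper does not actually prove this statement; it is stated as a Fact and cited to \cite[Theorem 5.3]{Ch} (and, for the NTP$_2$ case, to \cite{KaplanSimon}). So there is no in-paper argument to compare against, and the question is whether your sketch is itself a correct proof. It is not, and the gap is at the heart of why the result is nontrivial.

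The transposition step produces sequences $J_\alpha=(c_{i,\alpha})_{i<\omega}$ of $\Sigma$-realizations, but the ict-pattern (Remark \ref{rem:ict pattern}) only tells you the truth values $\models\varphi_\alpha(c_{i,\alpha},a_{j,\alpha})$ and $\models\varphi_\alpha(d,a_{j,\alpha})$; it says nothing about the direct relations between $d$ and the $c_{i,\alpha}$'s, nor between the $c_{i,\alpha}$'s themselves. (Note also $c_{0,\alpha}=b_{\eta^*}=d$, so $d$ literally sits inside each $J_\alpha$.) Consequently there is no formula over $Ad$ that you know breaks the indiscernibility of $J_\alpha$ --- for all you know, $(c_{i,\alpha})_{i\geq 1}$ is fully indiscernible over $Ad$. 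The parameters $a_{j,\alpha}$ that do distinguish the $c_{i,\alpha}$'s are exactly what must be discarded: they cannot all be added to $A'$ (that would not respect the cardinality bound, and worse, would be killed by the extraction, since $A'$-mutual indiscernibility of the $I_\alpha$'s would collapse their relation to the $a_{j,\alpha}$'s). Your attempt to internalize the $a$-dependence via $\psi_\alpha(x;d):=\exists y_\alpha(\varphi_\alpha(x,y_\alpha)\wedge\varphi_\alpha(d,y_\alpha))$ does not help: the existential quantifier ranges over the whole monster, and nothing in the ict-pattern bounds $\varphi_\alpha$ off the chosen array, so $\psi_\alpha(c_{i,\alpha},d)$ could well be true for every $i$. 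Without a breaking formula over $A'd$ going into the extraction, the Standard Lemma has nothing to preserve, and the extraction over $A$ with $d$ ``marked'' gives you $A'$-mutually indiscernible sequences of $\Sigma$-realizations with no guarantee that any $I_\alpha$ fails to be $A'd$-indiscernible. The actual proof in \cite[Theorem 5.3]{Ch} is substantially more involved precisely because this transposition-plus-Ramsey strategy is not enough; falling back on the citation in your final sentence is fine, but the sketch preceding it does not reconstruct a valid argument.
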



\subsection{NIP, Forking and Lascar-invariance}

In this section we discuss forking, invariance and Lascar-invariance in the context of NIP types. For the definitions, see \cite{Cas14,Sim}. Some of the proofs here are based on Sections 8, 9 and 10 from \cite{BCNS11}.

Recall that a model $M$ is\emph{ $\omega$-saturated over $A\subseteq M$} iff for every $B\subseteq M$, with $|B|<\omega$ every finitary type over $AB$ is realized in $M$. A set $B$ is called \emph{complete over $A\subseteq B$} if every finitary type over $A$ is realized in $B$. The next remark will not be used, but it is good to recall some properties of non-splitting.
\begin{rk} Given a sufficiently saturated model $N$, if $p\in S(N)$ does not split over $A$ then it does not fork over $A$ (it is enough that $N$ is $\omega$-saturated over $A$, see Proposition 1 in \cite{Cas14}), moreover $p$ has a unique non-splitting over $A$ extension to every $C\supseteq N$ (it is enough $N$ complete over $A$, see Proposition 4 in \cite{Cas14}).\end{rk} 

Recall that a set $B$ is Lascar-complete over $A\subseteq B$ if every finitary Lascar-strong type is realized in $B$, \textit{i.e.} for every finite tuple $a$ there is $a'\in B$ with $a\equiv^{Ls}_A a'$.  
We leave the following remark as an exercise to the reader.
\begin{rk} \label{rem:saturated implies Lascar-complete} Given $N\supset A$, where $N$ is a $(|A|+|T|)^+$-saturated model.\begin{enumerate}
\item $N$ is $\omega$-saturated over $A$.
\item $N$ is Lascar-complete over $A$. 
\end{enumerate} \end{rk}

By \cite[Proposition 5.21]{Sim}, in an NIP theory non-forking is equivalent to Lascar-invariance. The next lemma proves the analogous results for NIP types. (This was claimed in Proposition 8.6 of \cite{BCNS11}, but the proof there was not entirely correct, as the indiscernibility should be over the parameters of the formula implying NIP.)


Recall that a global type is called \emph{invariant} over some small set $A$ if it does not split over $A$ (which is the same as being invariant under automorphisms fixing $A$). 

Similarly, a global type is \emph{Lascar-invariant} over $A$ if it does not Lascar-split over $A$ (\textit{i.e.} invariant under the group of $A$-Lascar-strong automorphisms: automorphisms fixing $\equiv^{Ls}_A$-classes).

\begin{lem} \phantomsection \label{lem:lascar} 
\begin{enumerate}
\item Let $p\in S(\mathfrak{C})$ be an NIP global type and $A\subseteq\mathfrak{C}$, then $p$ does not fork over $A$ if and only if $p$ is Lascar-invariant over $A$.
\item Let $p\in S(B)$ NIP non-forking over $A\subseteq B$, then $p$ does not Lascar-split over $A$.
\item Let $B$ be Lascar-complete over $A\subseteq B$. If $p(x)\in S(B)$ is Lascar non-splitting over $A$, then
for every $C\supseteq B$ there is a unique $p'\in S(C)$ Lascar non-splitting over $A$ such that $p\subset p'$.
\item Let $B\supseteq A$ be Lascar-complete over $A$. If $p\in S(B)$ is NIP, then $p$ does not fork over $A$ if and only if $p$ does not Lascar-split over $A$.
\item Thus, if $p$ is NIP and non-forking over a model $M$, it is non-Lascar-splitting over $M$, which is the same as non-splitting over $M$.
\item For an NIP global type $p$ we have the following equivalences: $p$ does not divide over $A$ iff it does not fork over $A$ iff it is Lascar-invariant over $A$.

When $p$ stable this is also equivalent to being $acl^{eq}(A)$-definable/invariant.

If additionally $A$ is a model, these conditions are also equivalent to being $A$-invariant in both the NIP and stable cases.
\end{enumerate}
\end{lem}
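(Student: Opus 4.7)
The plan is to establish (1) first and then derive (2)--(6) from it via short arguments. For the easy direction of (1), Lascar-invariance of a global $p$ implies the existence of an $A$-indiscernible Morley sequence generated by $p$; such a sequence witnesses that no formula of $p$ divides, hence cannot fork, over $A$. For the converse I would proceed by contradiction: assume $p$ is NIP, non-forking over $A$, and Lascar-splits over $A$. Pick $\varphi(x,b),\neg\varphi(x,b')\in p$ with $b\equiv^{Ls}_A b'$, and using that $\equiv^{Ls}_A$ is the transitive closure of ``lying in a common $A$-indiscernible sequence'', reduce to $b=b_0$ and $b'=b_1$ sitting in a single $A$-indiscernible sequence $I=(b_i)_{i<\omega}$. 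Fix $d\models p$, and invoke the local NIP characterization from Remark \ref{rem:pglobnip} to choose $\alpha_\varphi(x,c)\in p$ so that $\varphi(x,y)\wedge\alpha_\varphi(x,c)$ is NIP as a formula with parameters; in particular $\models\alpha_\varphi(d,c)$. The goal is then to produce a $c$-indiscernible sequence on which $\varphi(d,\cdot)$ alternates, which via the last sentence of Remark \ref{rem:pglobnip} contradicts the NIP of $\varphi\wedge\alpha_\varphi$.

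The main technical obstacle is exactly this extraction: $I$ is indiscernible only over $A$, not over $Ac$, and one needs indiscernibility over $c$ while preserving an alternation witness. This is the subtlety that invalidates the attempt in \cite[Proposition 8.6]{BCNS11} flagged by the paper. I would handle it by enlarging $I$ to a very long $A$-indiscernible sequence, Ramsey-extracting from it a $c$-indiscernible sub-sequence with the same EM-type over $A$, and then applying an $A$-Lascar-strong automorphism sending the pair $(b,b')$ into (a pair of) the extracted sequence, thereby transporting both the truth-value witnesses for $\varphi(d,\cdot)$ and the formula $\alpha_\varphi(d,c)$ into the required configuration.

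With (1) in hand, (2) follows by extending $p\in S(B)$ to a global non-forking extension $q\in S(\mathfrak{C})$; every extension of an NIP type is NIP, so $q$ is NIP, by (1) Lascar-invariant over $A$, and hence $p=q\restriction B$ does not Lascar-split over $A$. Part (3) is immediate: for any $c\in C$, pick $c'\in B$ with $c\equiv^{Ls}_A c'$ using Lascar-completeness, and the Lascar non-splitting of the extension forces $\varphi(x,c)\in p'$ iff $\varphi(x,c')\in p$ for every $\varphi$. Part (4) combines (2) with its converse, obtained by lifting a Lascar non-splitting $p$ via (3) to a global Lascar-invariant extension and applying (1). Part (5) then follows from (2) applied at $B=M$ together with the standard fact that over a model $M$ the relations $\equiv^{Ls}_M$ and $\equiv_M$ coincide.

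Finally, for (6), the equivalence of non-dividing, non-forking, and Lascar-invariance for global NIP types chains (1) with the trivial implication non-forking $\Rightarrow$ non-dividing and with the Chernikov--Kaplan-style argument showing non-dividing $\Rightarrow$ non-forking (which adapts to NIP types via Remark \ref{rem:pglobnip}). The stable addendum follows from the classical fact that a stable invariant global type is $\mathrm{acl}^{eq}(A)$-definable, hence $\mathrm{acl}^{eq}(A)$-invariant, and conversely stable $\mathrm{acl}^{eq}(A)$-invariant types are definable. The last clause about $A$ a model reduces via (5) to the observation that $M$-invariance and Lascar-invariance over $M$ agree, uniformly in the NIP and stable settings.
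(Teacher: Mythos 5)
Your reduction of the problem to a local NIP instance via Remark \ref{rem:pglobnip} is the right start, and parts (2)--(5) of your derivation match the paper. However, the proposed fix for (1)$(\Rightarrow)$ has a real gap, and it sits exactly at the spot you flag as the ``main technical obstacle.''

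Your plan is: extract from a long $A$-indiscernible sequence a $c$-indiscernible subsequence $I'$ with the same EM-type over $A$, then apply an $A$-Lascar-strong automorphism $\sigma$ sending $(b,b')$ into $I'$, transporting the witnesses. This fails for two related reasons. First, $\sigma$ does not fix $c$; you obtain $\alpha_\varphi(\sigma(d),\sigma(c))$, but $I'$ is indiscernible over $c$, not over $\sigma(c)$, so the carefully extracted indiscernibility is over the wrong parameter. Second, and more fundamentally, the automorphism at best transports the truth-value of $\varphi(\sigma(d),\cdot)$ at two positions; it does not give you alternation along the whole sequence, which is what a contradiction to NIP requires. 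Nothing in your sketch for the hard direction actually invokes the hypothesis that $p$ does not fork over $A$, so the argument cannot succeed: non-forking is essential, not decorative. The paper's proof uses non-forking precisely at the step you are missing. It forms, from $b,b'$ and the (appropriately conjugated) parameters $c$, a sequence of ``configuration tuples'' $J=(b_k)_k$ with $b_0=(b,b',c)$; it extracts an $A$-indiscernible $\hat J=(\hat a_{2k}\hat a_{2k+1}\hat c_k)_k$ from $J$ keeping $\hat b_0=b_0$; and then, because $\chi(x,b_0)\in p$ and $p$ does not divide over $A$, the alternating type $\bigcup_k\chi(x,\hat b_k)$ is realized by some $e$. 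That realization is what gives alternation of $\varphi(e,\cdot)$ along the whole $Ac$-indiscernible tail $(\hat a_i)_{i\ge 2}$, with $\alpha_\varphi(e,c)$ holding, producing the contradiction. Your automorphism step cannot replace that use of non-dividing.

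Two smaller points. For the easy direction of (1), the cleanest argument is the one the paper uses: if $(b_i)_i$ is $A$-indiscernible and $\varphi(x,b_0)\in p$, then since all $b_i\equiv^{Ls}_A b_0$ and $p$ is Lascar-invariant, $\varphi(x,b_i)\in p$ for all $i$, so $\bigcup_i\varphi(x,b_i)\subseteq p$ is consistent; your Morley-sequence detour is more than is needed and does not obviously address an arbitrary $A$-indiscernible sequence of parameters. For (6), the equivalence of non-forking and non-dividing for a complete global type over any small set is a triviality (a forking formula of a complete type implies a disjunction of dividing formulas, one of which must lie in the type); the invocation of a ``Chernikov--Kaplan-style argument'' is out of place and suggests a confusion with the much harder NTP$_2$ theorem about forking = dividing over models.
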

\begin{proof} (1) $(\Rightarrow)$ It is enough to show that for some $a,b$ starting an indiscernible sequence over $A$, $\varphi(x,a)\in p$ implies $\varphi(x,b)\in p$ for any $\varphi(x,y)\in\mathcal{L}(A)$. 
So assume towards a contradiction we have $a,b$ starting an indiscernible sequence $(a_i)_{i<\omega}$ over $A$ and $\varphi(x,a)\land\lnot\varphi(x,b)\in p$ for some $\varphi(x,y)\in\mathcal{L}(A)$. Since $p$ is NIP there is $\theta(x,c)\in p$ such that $\psi(x,y)=\theta(x,c)\land\varphi(x,y)$ is NIP. Let $I = (a_i)_{i \geq 2}$. Extracting, we can find $I'=a'_2, a'_3\dots$ indiscernible over $Aa_0a_1c$ such that $I\equiv_{Aa_0a_1c} I'$. In particular $I''=a_0,a_1,a'_2,a'_3\dots$ is $A$-indiscernible. Now let $c_0=c$ and for every $0<k<\omega$ let $c_{2k}$ be such that $a_0 a_1 a'_{\geq 2}c\equiv_A a'_{2k}a'_{2k+1} a'_{\geq 2k+2}c_{2k}$. Consider now the sequence $J=(b_k)_{k<\omega}$ where $b_k=a'_{2k}a'_{2k+1}c_{2k}$ for $0<k<\omega$ and $b_0=a_0a_1c$. Again extract an $A$-indiscernible sequence $\hat{J}=(\hat{b}_k)_{k<\omega}=(\hat{a}_{2k}\hat{a}_{2k+1}\hat{c}_{2k})_{k<\omega}$ with $EM(J/A)\subseteq EM(\hat{J}/A)$. We may assume $\hat{b}_0=b_0=a_0a_1c$. Consider $\chi(x,y_0,y_1,z)=\varphi(x,y_0)\land\lnot\varphi(x,y_1)\land\theta(x,z)$ and $\pi(x)=\Set{\chi(x,\hat{b_k})}{k<\omega}$. Now, since $\chi(x,b_0)\in p$ and $p$ does not fork over $A$ there is some $e\models\pi(x)$. Hence $\models\theta(e,c)$ and $\models\varphi(e,\hat{a}_i)^{(i\mbox{ even})}$ for all $i<\omega$, and in particular $\models\psi(e,\hat{a}_i)^{(i\mbox{ even})}$ for all $2 \leq i<\omega$. Since $(\hat{a}_i)_{2\leq i<\omega}$ is indiscernible over $Ac$ we have a contradiction with the choice of $\psi(x,y)$.

$(\Leftarrow)$  holds in general since for global types forking equals dividing over small sets and since elements in indiscernible sequences have the same Lascar-strong type.

(2) follows from $(1)$ taking a global non-forking extension. 

For (3) follow the proof of \cite[Proposition 4]{Cas14} replacing ``complete'' with ``Lascar-complete'' and ``splitting'' with ``Lascar splitting''. See also \cite{Cas14} after Remark 13.

(4) The direction which remained open now follows from $(3)$ and $(1)$.

For (5) and the last point of $(6)$ note that the notions of type and Lascar-strong type coincide over models. 

The only statement remaining is the point on stable types in (6). If $p$ does not fork over $A$ then it is invariant over any model containing $A$ (since stable types are NIP). Hence it is definable over any such model. For each formula $\varphi(x,y)$, let $e_\varphi$ be a canonical parameter of the formula defining $p\restriction \varphi$ in $\mathfrak{C}^{eq}$ (by this we mean that an automorphism fixes $p\restriction \varphi$ setwise if and only if it fixes $e_\varphi$). Hence $e_\varphi$ belongs to any model containing $A$ and thus to their intersection which is $acl^{eq}(A)$.

On the other hand, if $p$ is $acl^{eq}(A)$-invariant then it clearly is also $acl^{eq}(A)$-definable (since it is definable). 
\end{proof}


We also observe the following version of the Standard Lemma for extracting an indiscernible sequence while preserving the Lascar-strong type. 
\begin{lem}\label{lem:lascarseq} Let $I$ be an infinite indiscernible sequence over $A$ and $B\supseteq A$. Then there is some $B$-indiscernible sequence $J$ such that $J\equiv^{Ls}_A I$.

Moreover, let $I = (I_\alpha)_{\alpha<\kappa}$ be a sequence of $A$-mutually indiscernible sequences. Then there are some $B$-mutually indiscernible sequences $J = (J_\alpha)_{\alpha<\kappa}$ such that $J\equiv^{Ls}_A I$. 
\end{lem}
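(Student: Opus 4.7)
The strategy is to obtain $J$ not via the usual Standard Lemma (which produces a $B$-indiscernible sequence with the correct EM-type but living in an unrelated part of the monster), but as an \emph{actual subsequence} of a long $A$-indiscernible extension of $I$, so that $I$ and $J$ sit inside a common indiscernible. This is what will let us promote a mere $\equiv_A$ agreement to $\equiv^{Ls}_A$, via the basic fact that two tuples starting a common $A$-indiscernible sequence have the same Lascar-strong type over $A$.

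First, by compactness, I would extend $I$ to an $A$-indiscernible sequence $\tilde I=(a_i)_{i<\mu}$ with $I$ as an initial segment and $\mu$ large enough for Erd\H{o}s--Rado applied to types over $B$ (say $\mu\geq\beth_\omega((|B|+|T|)^+)$). Then, by iterated Erd\H{o}s--Rado together with compactness, pick an increasing family of indices $i_0<i_1<\cdots<\mu$, all strictly above the indices of $I$, such that $J:=(a_{i_k})_{k<\omega}$ is $B$-indiscernible. Being a literal subsequence of $\tilde I$, this $J$ is automatically $A$-indiscernible and lies strictly after $I$ inside $\tilde I$.

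For the Lascar step, pick further pairwise disjoint $\omega$-subsequences $K_1,K_2,\ldots$ of $\tilde I$ with $K_1$ beyond all indices of $J$ and each $K_{m+1}$ beyond $K_m$. Viewing each of $I$, $J$, and $K_m$ as an $\omega$-tuple, the $A$-indiscernibility of $\tilde I$ (as a sequence of singletons) gives that both $(I,K_1,K_2,\ldots)$ and $(J,K_1,K_2,\ldots)$ are $A$-indiscernible sequences of $\omega$-tuples, since the $A$-type of any finite initial segment depends only on the order type $\omega\cdot n$ of the concatenation of the underlying indices in $\tilde I$. Hence $(I,K_1)$ and $(J,K_1)$ each start an $A$-indiscernible sequence of $\omega$-tuples, yielding $I\equiv^{Ls}_A K_1\equiv^{Ls}_A J$ as required.

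For the moreover part, the same plan is run in parallel: extend $(I_\alpha)_{\alpha<\kappa}$ to mutually $A$-indiscernible sequences $(\tilde I_\alpha)$ of common long length; extract, via a product form of Erd\H{o}s--Rado, subsequences $J_\alpha$ of $\tilde I_\alpha$ beyond the indices of $I_\alpha$ so that $(J_\alpha)_{\alpha<\kappa}$ is $B$-mutually indiscernible; then choose coordinate-wise $\omega$-subsequences $K^m_\alpha$ of $\tilde I_\alpha$ to the right of $J_\alpha$ so that the array $(K^m_\alpha)_\alpha$ remains $A$-mutually indiscernible and sits jointly to the right of both $(I_\alpha)$ and $(J_\alpha)$. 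Applying the Lascar argument to the combined $\kappa$-indexed $\omega$-tuple then gives $(I_\alpha)\equiv^{Ls}_A(K^1_\alpha)\equiv^{Ls}_A(J_\alpha)$. The main obstacle is precisely this subsequence extraction (and its simultaneous, mutual version): the naive Standard Lemma does not place $I$ and $J$ in a common indiscernible, so one must pay the price of Erd\H{o}s--Rado on a sufficiently long extension in order to upgrade EM-type agreement to Lascar-strong equivalence.
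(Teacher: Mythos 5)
Your key idea---obtaining $J$ as a \emph{literal subsequence} of a long $A$-indiscernible extension $\tilde I$ of $I$, so that $I$ and $J$ sit inside a common $A$-indiscernible sequence---is genuinely different from the paper's, and the Lascar step you run afterwards (inserting the $K_m$'s to the right and using that $I$ and $J$ each start an $A$-indiscernible sequence of $\omega$-tuples) would be fine if the extraction worked. But the extraction step is where the argument breaks, and it cannot be repaired in ZFC. Finding an $\omega$-subsequence of $\tilde I$ that is $B$-indiscernible requires, in effect, a partition relation of the form $\mu \to (\omega)^{<\omega}_{\lambda}$ (homogeneity for all arities at once), which is the defining property of the Erd\H{o}s cardinal $\kappa(\omega)$---a large cardinal. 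Iterating Erd\H{o}s--Rado over $n$ does not close: each application of $\beth_n(\lambda)^+\to(\lambda^+)^{n+1}_\lambda$ drops the index set to size only $\lambda^+$, too small to continue at the next arity, and there is no backward recursion to set up since $\omega$ has no last element. So the proposed bound $\mu\geq\beth_\omega((|B|+|T|)^+)$ is nowhere near enough, and in fact no ZFC-provable bound exists: take $T$ to be the generic hypergraph with an $n$-ary relation $R_n$ for every $n$, take $\tilde I$ an $\emptyset$-indiscernible sequence on which every $R_n$ is uniformly false, and let $b$ be a single new point with $R_{n+1}(a_{i_0},\dots,a_{i_{n-1}},b)$ encoding a coloring of $[\mu]^{<\omega}$ by two colors with no infinite homogeneous set (such a coloring exists for every $\mu$ if $\kappa(\omega)$ does not exist). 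Then $\tilde I$ is $\emptyset$-indiscernible yet has no infinite $\{b\}$-indiscernible subsequence. Your phrase ``together with compactness'' does not repair this: once compactness is invoked, the output is a $B$-indiscernible sequence realizing the EM-type of $\tilde I$ but living in an unrelated part of the monster, so it is no longer a subsequence and the ``common indiscernible'' mechanism evaporates.

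The paper achieves the Lascar upgrade by a different device that avoids the Ramsey-theoretic obstruction entirely. By Ramsey, compactness, and an automorphism over $A$, one first produces a small model $M\supseteq A$ over which $I$ itself is indiscernible. The ordinary Standard Lemma is then applied over $MB$: it yields an $MB$-indiscernible sequence $J$ (not a subsequence, and that is fine) whose EM-type over $MB$ is based on that of $I$. Since $I$ is already $M$-indiscernible and $J$ has the same order type, this forces $J\equiv_M I$, and having the same type over a model containing $A$ immediately gives $J\equiv^{Ls}_A I$. In other words, the model $M$ plays the role you wanted the common indiscernible $\tilde I$ to play, but at no cardinality cost; the ``moreover'' clause is handled identically using the mutually-indiscernible form of the Standard Lemma.
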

\begin{proof} For the first part, by Ramsey, compactness and applying an automorphism, there is some $M \supseteq A$ such that $I$ is indiscernible over $M$. Extract from $I$ an $MB$-indiscernible sequence $J$ of the same order type so that (as $I$ is already $M$-indiscernible), $J\equiv_M I$. In particular we have $J\equiv^{Ls}_A I$ and is $B$-indiscernible.

The ``moreover'' part is proved in the same way since the version of the Standard Lemma for mutually indiscernible sequences holds, see \textit{e.g.} \cite[Lemma 3.5]{HilsChernikov}, \cite[Lemma 4.2]{Sim}. 
\end{proof}


\subsection{Morley sequences and Generically Stable types}

\begin{defi} Suppose that $(X,<)$ is linearly ordered. \begin{enumerate}
\item A sequence $(a_i)_{i \in X}$ is \emph{generated by} $p\in S(B)$ over $A\subseteq B$ if $a_i\models p\restriction Aa_{<i}$ for all $i \in X$. 
\item A \emph{Morley sequence} over $A$ is a sequence $(a_i)_{i \in X}$ which is $A$-indiscernible and $A$-independent, \textit{i.e.} such that $\tp(a_i/Aa_{<i})$ does not fork over $A$ for every $i \in X$.
\item Given a type $p\in S(B)$ and $A\subseteq B$, \emph{a Morley sequence generated by} $p$ \emph{over} $A$ is a  sequence generated by $p$ over $A$ which is Morley over $A$.
\end{enumerate} \end{defi}

\begin{defi}
    If $q(x)$ and $r(y)$ are $A$-invariant global types, then the type $(q \otimes r)(x,y)$ is defined to be $\tp(a,b/\mathfrak{C})$ (in a bigger monster model) for any $b \models r$ and $a \models q\restriction \mathfrak{C}b$ (here we understand $q$ to mean its unique extension to a bigger model). (This can also be defined without stepping outside of the monster model, see \cite[Chapter 2]{Sim}.)
    
    For an ordinal $\alpha$, we define $q^{(\alpha)}(x_{\beta})_{\beta<\alpha}$ by induction:  $q^{(1)} = q$, $q^{(\alpha + 1)} = q(x_{\alpha}) \otimes q^{(\alpha)}(x_{<\alpha})$, taking unions at limits. 
 \end{defi}

 \begin{fact}\cite[Chapter 2]{Sim} \label{fac:sequences generated by invariant types}
    Given a global $A$-invariant type $q$ and an ordinal  $\alpha$, $q^{(\alpha)}$ is an $A$-invariant global type.  In addition, it is a type of an indiscernible sequence over $\mathfrak{C}$. 
    
    For any small set $B \supseteq A$, $q^{(\alpha)} \restriction B$ is given by $\tp(a_i)_{i<\alpha}$ where $(a_i)_{i<\alpha}$ is generated by $q$ over $B$.  Thus, this is a Morley sequence over both $B$ and $A$. 
\end{fact}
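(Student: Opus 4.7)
The plan is to establish all four assertions by a joint transfinite induction on $\alpha$, simultaneously proving: (i) $q^{(\alpha)}$ is a complete $A$-invariant global type in the variables $(x_\beta)_{\beta<\alpha}$; (ii) its restriction to $\mathfrak{C}$ is the type of a $\mathfrak{C}$-indiscernible sequence; (iii) for every small $B\supseteq A$, $q^{(\alpha)}\restriction B=\tp((a_i)_{i<\alpha}/B)$ whenever $(a_i)_{i<\alpha}$ is generated by $q$ over $B$; and (iv) any such generated sequence is Morley over both $B$ and $A$. The base case $\alpha=1$ is immediate, and at a limit ordinal every formula of $q^{(\alpha)}$ involves only finitely many variables and hence belongs to some $q^{(\beta)}$ with $\beta<\alpha$, so all four properties transfer to the union.

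The real content is in the successor step $\alpha\mapsto\alpha+1$. Unfolding the product definition, I pick (in a larger monster $\mathfrak{C}'\succ\mathfrak{C}$, if $\alpha$ exceeds the saturation of $\mathfrak{C}$) a realization $(a_\beta)_{\beta<\alpha}\models q^{(\alpha)}\restriction\mathfrak{C}$ and then $a_\alpha\models q\restriction\mathfrak{C}(a_\beta)_{\beta<\alpha}$; by construction $q^{(\alpha+1)}=\tp((a_\beta)_{\beta\leq\alpha}/\mathfrak{C})$. For $A$-invariance, given an $A$-automorphism $\sigma$ of $\mathfrak{C}$, I extend it to $\mathfrak{C}'$, apply the inductive invariance of $q^{(\alpha)}$ to conclude that $(\sigma(a_\beta))_{\beta<\alpha}$ again realizes $q^{(\alpha)}\restriction\mathfrak{C}$, and then the invariance of $q$ to conclude that $\sigma(a_\alpha)\models q\restriction\mathfrak{C}(\sigma(a_\beta))_{\beta<\alpha}$; thus $(\sigma(a_\beta))_{\beta\leq\alpha}$ realizes the product type, which is therefore $\sigma$-fixed. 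Claim (iii) is obtained by restricting the same realization from $\mathfrak{C}$ to $B$, and (iv) follows because each $a_i\models q\restriction Ba_{<i}$ is a restriction of the $A$-invariant global type $q$, and $A$-invariant global types do not fork over $A$.

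The main obstacle is (ii). I plan to isolate the following lemma: if $(b_i)_{i<\delta}$ is $\mathfrak{C}$-indiscernible and generated by $q$ over $\mathfrak{C}$, and $b\models q\restriction\mathfrak{C}(b_i)_{i<\delta}$, then the concatenation $(b_i)_{i<\delta}\,b$ is still $\mathfrak{C}$-indiscernible. Given two increasing $n$-subsequences $(b_{\beta_j})_{j<n}$ and $(b_{\gamma_j})_{j<n}$ of the extended sequence, three cases arise. If neither subsequence contains $b$, the inductive indiscernibility of $(b_i)_{i<\delta}$ suffices. If both end in $b$, the inductive indiscernibility produces an automorphism $\sigma$ fixing $\mathfrak{C}$ with $\sigma(b_{\beta_j})=b_{\gamma_j}$ for $j<n-1$, and the $A$-invariance of $q$ forces $\sigma(b)$ and $b$ to realize the same complete type $q\restriction\mathfrak{C}(b_{\gamma_j})_{j<n-1}$, whence $(b_{\gamma_j})_{j<n-1}\,\sigma(b)\equiv_{\mathfrak{C}}(b_{\gamma_j})_{j<n-1}\,b$, yielding the desired equality. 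The mixed case, where only one subsequence contains $b$, reduces to the previous one by replacing $b_{\gamma_{n-1}}$ with $b$: both realize the complete type $q\restriction\mathfrak{C}(b_{\gamma_j})_{j<n-1}$ (the first because $b_{\gamma_{n-1}}\models q\restriction\mathfrak{C}(b_i)_{i<\gamma_{n-1}}$, the second by the choice of $b$). Once the lemma is in hand, indiscernibility over any small $B\supseteq A$ follows by restricting from $\mathfrak{C}$, completing the successor step.
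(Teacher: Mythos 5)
The paper states this as a \textbf{Fact} cited from \cite[Chapter 2]{Sim} and offers no proof of its own, so there is no internal proof to compare against. Your transfinite induction is the standard argument and is essentially correct. The technical heart --- the lemma that if $(b_i)_{i<\delta}$ is $\mathfrak{C}$-indiscernible and generated by $q$ over $\mathfrak{C}$, and $b\models q\restriction\mathfrak{C}(b_i)_{i<\delta}$, then $(b_i)_{i<\delta}\,b$ is still $\mathfrak{C}$-indiscernible --- is handled carefully; in particular the reduction of the mixed case to the ``both end in $b$'' case via the observation that $b_{\gamma_{n-1}}$ and $b$ realize the same complete type $q\restriction\mathfrak{C}(b_{\gamma_j})_{j<n-1}$ is exactly the right move.

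Two points are passed over a bit more quickly than they deserve, though both repairs are routine. For claim (iii), the statement quantifies over \emph{all} sequences generated by $q$ over $B$, whereas restricting a single realization of $q^{(\alpha+1)}$ from $\mathfrak{C}$ to $B$ exhibits only one such sequence; you should note that $\tp(a_{\leq\alpha}/B)$ is uniquely determined from $\tp(a_{<\alpha}/B)=q^{(\alpha)}\restriction B$ (induction) together with $\tp(a_\alpha/Ba_{<\alpha})=q\restriction Ba_{<\alpha}$, so every generated sequence has the same type over $B$. For claim (iv), your sentence only delivers that $\tp(a_i/Ba_{<i})$ does not fork over $A$, which gives a Morley sequence over $A$; but Morley over $B$ requires non-forking over $B$, which does not follow formally from non-forking over $A$. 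The fix is simply that $q$, being $A$-invariant with $A\subseteq B$, is also $B$-invariant, hence does not fork over $B$, so $\tp(a_i/Ba_{<i})=q\restriction Ba_{<i}$ does not fork over $B$ either. (The $B$-indiscernibility part of (iv) is correctly subsumed by (ii) plus (iii).)
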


\begin{rk}\label{rem:exmor} 
    Suppose that $p$ is a global $A$-Lascar-invariant type. Hence $p$ is $M$-invariant for any model $M \supseteq A$, and hence by Fact \ref{fac:sequences generated by invariant types} so is $p^{(\alpha)}$ for any ordinal $\alpha$. It follows that $p^{(\alpha)}$ is also Lascar-invariant over $A$. 

    
    In this context, by saturation one can generate a Morley sequence of $p$ over any small model containing $A$. Such a sequence will be a Morley sequence over $A$.
    
    By Lemma \ref{lem:lascar} the above is true in particular when $p$ is NIP and does not fork over $A$.
\end{rk}




In the situation of Remark \ref{rem:exmor}, a sequence generated by $p$ over $A$ might not be indiscernible. The reason is that $p \restriction A$ does not determine a unique Lascar-strong type. This can be fixed if we already have an infinite indiscernible sequence generated by $p$:

\begin{lem}\label{lem:indlas2} Suppose that $p(x)\in S(\mathfrak{C})$ is a global $A$-Lascar-invariant type. Let $(X,<)$ be some infinite linearly ordered set. Let $I=(a_i)_{i\in X}$ be an $A$-indiscernible sequence such that $a_i\models p\restriction Aa_{<i}$ for all $i \in X$. 

Then, if $(Y,<)$ is some linear order and $J=(b_i)_{i \in Y}$ is such that $b_i\models p\restriction AIb_{<i}$, then $I+J$ is $A$-indiscernible. \end{lem}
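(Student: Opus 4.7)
The plan is to prove by induction on $|Y_0|$ that $I+(b_i)_{i\in Y_0}$ is $A$-indiscernible for every finite $Y_0\subseteq Y$. Since $A$-indiscernibility is of finite character and every finite increasing tuple from $I+J$ involves only finitely many elements of $J$, this will yield the lemma. The base case $Y_0=\emptyset$ is immediate since $I$ is $A$-indiscernible by hypothesis.

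For the inductive step, let $Y_0$ have size $n+1$, put $j=\max Y_0$, and write $K_0=I+(b_i)_{i\in Y_0\setminus\{j\}}$ and $K_1=K_0+(b_j)$. By the induction hypothesis $K_0$ is $A$-indiscernible, and, crucially, it is infinite because $I$ is. Therefore any two increasing tuples of the same length from $K_0$ have not only the same type but even the same Lascar-strong type over $A$, via the standard fact that same-length increasing tuples from an infinite $A$-indiscernible sequence are $\equiv^{Ls}_A$. I will then take two arbitrary increasing $k$-tuples $\bar{c}=(\bar{c}',c_k)$ and $\bar{d}=(\bar{d}',d_k)$ from $K_1$, splitting off the last coordinate; in all cases the prefixes $\bar{c}',\bar{d}'$ lie entirely in $K_0$, and hence $\bar{c}'\equiv^{Ls}_A\bar{d}'$.

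A short case analysis, splitting on whether $c_k$ is some $a_\alpha\in I$ or some $b_\ell$ with $\ell\in Y_0$ (including the case $\ell=j$), shows that the set of predecessors of $c_k$ in $K_1$ is always contained in the set over which the defining property $a_\alpha\models p\restriction Aa_{<\alpha}$, respectively $b_\ell\models p\restriction AIb_{<\ell}$, guarantees that $c_k$ realizes $p$. Hence $c_k\models p\restriction A\bar{c}'$, and by completeness of $p$ this becomes $\tp(c_k/A\bar{c}')=p\restriction A\bar{c}'$; the same argument gives $\tp(d_k/A\bar{d}')=p\restriction A\bar{d}'$. Applying $A$-Lascar-invariance of $p$ to $\bar{c}'\equiv^{Ls}_A\bar{d}'$ then transports $p\restriction A\bar{c}'$ onto $p\restriction A\bar{d}'$ along $\bar{c}'\mapsto\bar{d}'$, and combining this with $\bar{c}'\equiv_A\bar{d}'$ gives $(\bar{c}',c_k)\equiv_A(\bar{d}',d_k)$, as required.

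I expect the main obstacle to be this very upgrade from $\equiv_A$ to $\equiv^{Ls}_A$ before Lascar-invariance of $p$ can be invoked: the hypothesis only grants Lascar-invariance, not full $A$-invariance, so type equality over $A$ alone would not suffice. This is precisely the reason for the inductive formulation—adding one $b_j$ at a time while keeping the infinite indiscernible block $I$ intact inside $K_0$—so that the infinite-sequence characterization of $\equiv^{Ls}_A$ remains available at every step.
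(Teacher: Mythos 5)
Your proof is correct and takes essentially the same approach as the paper's: induction reducing to adding one element of $J$ at a time, then invoking $A$-Lascar-invariance of $p$ together with the fact that same-length increasing tuples from the infinite $A$-indiscernible sequence $K_0$ (the paper works with $I$ itself) have the same Lascar-strong type over $A$. The only cosmetic difference is that you compare two arbitrary increasing tuples from $K_1$ symmetrically, whereas the paper compares a tuple ending in the new element against one lying entirely in $I$.
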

\begin{proof} We may assume that $|Y|<\omega$ and prove by induction on $|Y|$, so that it is enough to prove it in the case $|Y| = 1$. So suppose that $b\models p\restriction AI$, and we should show that $I+b$ is $A$-indiscernible. Indeed, suppose that $i_0< \dots <i_{n-1} \in X$, $\varphi(x_0,\dots ,x_{n-1},y)\in\mathcal{L}(A)$ and $\models\varphi(a_{i_0},\dots, a_{i_{n-1}}, b)$. Since $b\models p \restriction AI$, $\varphi(a_{i_0}, \dots, a_{i_{n-1}}, x)\in p(x)$. Fix some $a_{j_0},\dots , a_{j_n} \in X$ (which exist since $X$ is infinite). Note that $a_{i_0},\dots , a_{i_{n-1}}\equiv^{Ls}_A a_{j_0},\dots , a_{j_{n-1}}$ because $I$ is infinite. Since $p$ is Lascar-invariant over $A$, $\varphi(a_{j_0},\dots , a_{j_{n-1}}, x)\in p$. Therefore $\models\varphi(a_{j_0},\dots , a_{j_{n-1}}, a_{j_n})$.\end{proof}



\begin{lem}\label{lem:expanding Morely sequence} Suppose that $p(x)\in S(\mathfrak{C})$ is a global $A$-Lascar-invariant type. Suppose that $I = I_1 + I_2$ is an infinite Morley sequence generated by $p$ over $A$. Then, if $J$ is such that $I' = I_1 + J +I_2$ is $A$-indiscernible, then $I'$ is a Morley sequence generated by $p$ over $A$. \end{lem}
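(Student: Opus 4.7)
The plan is to check the Morley condition element by element: for every $e \in I'$, letting $I'_{<e}$ denote its set of predecessors in $I'$, I want to show $e \models p \restriction AI'_{<e}$. Together with the hypothesized $A$-indiscernibility of $I'$, this yields that $I'$ is a Morley sequence generated by $p$ over $A$.

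The key idea is to compare finite increasing tuples of $I'$ with increasing tuples of the same length living entirely inside $I_1 + I_2$, which is possible precisely because $I_1 + I_2$ is infinite. Concretely, I fix such $e \in I'$ and an arbitrary formula $\varphi(x,\bar f) \in p$, where $\bar f$ is a finite tuple from $I'_{<e}$ and $\varphi$ is an $\mathcal{L}(A)$-formula, and aim at $\models \varphi(e,\bar f)$. First, I pick an increasing tuple $(\bar f', e')$ in $I_1 + I_2$ of the same length as $(\bar f, e)$. By $A$-indiscernibility of $I'$, $(\bar f, e) \equiv_A (\bar f', e')$; moreover, by the standard fact also used in the proof of Lemma \ref{lem:indlas2}, that any two finite increasing tuples of the same length in an infinite $A$-indiscernible sequence share the same Lascar-strong type over $A$, one has $\bar f \equiv^{Ls}_A \bar f'$. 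Lascar-invariance of $p$ then yields $\varphi(x,\bar f) \in p \iff \varphi(x,\bar f') \in p$. Since $I_1 + I_2$ is Morley generated by $p$ over $A$ and $e'$ sits after $\bar f'$ in $I_1 + I_2$, we have $e' \models p \restriction A\bar f'$, hence $\models \varphi(e',\bar f')$; transferring back through $(\bar f, e) \equiv_A (\bar f', e')$ gives $\models \varphi(e,\bar f)$, as required.

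The critical subtlety is the transfer of $\varphi$ between the two parameter tuples: we genuinely need $\bar f \equiv^{Ls}_A \bar f'$ rather than just $\bar f \equiv_A \bar f'$, since $p$'s invariance is only under Lascar-strong automorphisms. This is exactly where the hypothesis that $I$ (and hence $I'$) is infinite enters, supplying the Lascar-equivalence of same-length increasing tuples in the $A$-indiscernible sequence $I'$. No other genuine obstacle is anticipated.
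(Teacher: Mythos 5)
Your proof is correct and is essentially the paper's argument: both compare a finite increasing tuple from $I'$ with an increasing tuple of the same length drawn from $I$, invoke the fact that increasing tuples of the same length in an infinite $A$-indiscernible sequence are $\equiv^{Ls}_A$-equivalent, and then use Lascar-invariance of $p$ to transfer membership in $p$ between the two parameter tuples. The only cosmetic difference is the direction (you pass from $\varphi(x,\bar f)\in p$ to $\models\varphi(e,\bar f)$, while the paper passes from $\models\varphi(a,b)$ to $\varphi(x,b)\in p$), which is an equivalent reformulation of the same step.
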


\begin{proof}
    Suppose that $a \in I'$ and that $\varphi(a,b)$ holds for $b \in I'_{<a}$ and $\varphi$ a formula over $A$. Let $a'b' \in I$ be such that $b'\in I_{<a'}$ and $a'b' \equiv_{A}^{Ls} ab$ (just let $a'b'$ be any increasing tuple from $I$ of the same length). Then $\varphi(a',b')$ holds, so that $\varphi(x, b') \in p$ and by Lascar invariance $\varphi(x,b) \in p$.
\end{proof}

The following is an easy lemma which will be useful in Corollary \ref{cor:niptotind}.
\begin{lem}\label{lem:lema} Suppose that $M\supseteq A$. Let $p\in S(\mathfrak{C})$ be a global type which is Lascar-invariant over $A$, $J$ an infinite $A$-indiscernible sequence and $a\models p\restriction M$.

Then, we can find $I=(a_i)_{i<\omega}$ such that $a+I$ is a Morley sequence generated by $p$ over $A$ and $J$ is $AI$-indiscernible.

Moreover, if $J = (J_\alpha)_{\alpha<\kappa}$ is a sequence of infinite $A$-mutually indiscernible sequence then we can find some $I$ as above such that $J$ is $AI$-mutually indiscernible.
\end{lem}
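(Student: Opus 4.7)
The plan is to build $I=(a_i)_{i<\omega}$ by generating from $p$ over $MJa$, namely picking $a_i\models p\restriction MJaa_{<i}$ for each $i<\omega$, which is possible by saturation of $\mathfrak{C}$.

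For the first conclusion, since $p$ is Lascar-invariant over $A$ and $M$ is a model containing $A$, $p$ is $M$-invariant by Lemma \ref{lem:lascar}. The sequence $(a,a_0,a_1,\dots)$ is generated by $p$ over $M$ (use $Maa_{<i}\subseteq MJaa_{<i}$ together with $a\models p\restriction M$), so by Remark \ref{rem:exmor} it is a Morley sequence over $A$. Since $Aaa_{<i}\subseteq MJaa_{<i}$, this same sequence is also generated by $p$ over $A$, giving the first conclusion.

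For the second conclusion, the key observation is that by Remark \ref{rem:exmor} the tensor power $p^{(m)}$ is Lascar-invariant over $A$ for each $m<\omega$. Fix a finite subtuple $\bar a$ of $I$ of length $m$; in the appropriate ordering it realizes $p^{(m)}\restriction MJa$, hence $p^{(m)}\restriction A\bar b$ for any finite $\bar b$ from $J$. Given increasing tuples $\bar b,\bar b'$ of $J$ of the same length, infinite $A$-indiscernibility of $J$ yields $\bar b\equiv^{Ls}_A\bar b'$; Lascar-invariance of $p^{(m)}$ then gives $\varphi(\bar x,\bar b)\in p^{(m)}\Leftrightarrow\varphi(\bar x,\bar b')\in p^{(m)}$ for every formula $\varphi$, so $\models\varphi(\bar a,\bar b)\leftrightarrow\varphi(\bar a,\bar b')$. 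Hence $\bar b\equiv_{A\bar a}\bar b'$, and as $\bar a$ was arbitrary, $J$ is $AI$-indiscernible.

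For the moreover part, the same scheme works upon generating $I$ over $MJ^*a$ with $J^*=\bigcup_{\alpha<\kappa}J_\alpha$. To check $AI$-mutual indiscernibility, fix $\alpha<\kappa$ and a finite $D\subseteq\bigcup_{\beta\neq\alpha}J_\beta$; by $A$-mutual indiscernibility $J_\alpha$ is indiscernible over $AD$, and since $J_\alpha$ is infinite, any two increasing tuples $\bar b,\bar b'$ in $J_\alpha$ of the same length satisfy $\bar bD\equiv^{Ls}_A\bar b'D$, so the previous Lascar-invariance argument applies verbatim. I do not expect any substantial obstacle; the only delicate point is recognizing that Lascar-invariance (rather than mere invariance) of $p^{(m)}$ is what is needed, and this is precisely what Remark \ref{rem:exmor} supplies.
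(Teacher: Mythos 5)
Your proof is correct and follows essentially the same route as the paper: generate $I$ by $p$ over $MJa$ (the paper writes $MaJ$), observe that $a+I \models p^{(\omega+1)}\restriction M$ so it is a Morley sequence over $A$, and then use Lascar-invariance of the tensor powers of $p$ over $A$ (from Remark \ref{rem:exmor}) together with the fact that increasing tuples from an infinite $A$-indiscernible sequence (resp.\ tuples "multi-ordered the same way" from $A$-mutually indiscernible sequences) have the same Lascar-strong type over $A$. The only cosmetic difference is that you run the argument with $p^{(m)}$ for finite $m$ rather than $p^{(\omega)}$, which is immaterial.
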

\begin{proof} 
    Let $I = (a_i)_{i<\omega}$ be a Morley sequence generated by $p$ over $MaJ$. Then $I \models p^{(\omega)} \restriction MaJ$ and $a+I \models p^{(\omega+1)} \restriction M$ so it is $M$-indiscernible. Since $p^{(\omega)}$ is Lascar-invariant it implies that $J$ is $AI$-indiscernible.
    
    The ``moreover'' part is proved similarly, since in a sequence of mutually indiscernible sequences, every two finite tuples multi-ordered in the same way have the same Lascar-strong type. 
\end{proof}


\begin{defi} We say that a global type $p$ is \emph{generically stable over} $A$ if it is $A$-invariant and for every ordinal $\alpha$, every $\varphi(x)$ with parameters in the monster model and every Morley
sequence $(a_i)_{i<\alpha}$ generated by $p$ over $A$, the set $\Set{i<\alpha}{\models\varphi(a_i)}$ is finite or cofinite.\end{defi}
\begin{fact} \label{fac:generically stable -> totally indiscernible} \cite[Proposition 2.1]{AnandPredrag} If $p$ is generically stable over $A$ then every Morley sequence $(a_i)_{i < \omega}$ generated by $p$ over $A$ is totally indiscernible over $A$.\end{fact}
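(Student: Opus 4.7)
My plan is to prove the contrapositive by contradiction: assume some Morley sequence $I = (a_i)_{i<\omega}$ generated by $p$ over $A$ fails to be totally indiscernible, and derive a failure of the finite/cofinite condition of generic stability. The main engine is the two-variable case, where the argument is clean; reducing the general case to this is a combinatorial step that I outline at the end.

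For the two-variable case, suppose $\varphi(x,y) \in \mathcal{L}(A)$ satisfies $\varphi(a_0, a_1) \wedge \neg\varphi(a_1, a_0)$. By $A$-indiscernibility of $I$, this propagates to $\varphi(a_i, a_j) \iff i < j$ for all distinct $i, j < \omega$. Using Remark \ref{rem:exmor} (valid since generic stability yields $A$-invariance, hence $A$-Lascar-invariance), prolong $I$ to a Morley sequence $(a_\alpha)_{\alpha<\omega + \omega}$ generated by $p$ over $A$; by $A$-indiscernibility of this extension one still has $\varphi(a_\alpha, a_\beta) \iff \alpha < \beta$ for all distinct $\alpha, \beta < \omega+\omega$. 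Now apply generic stability to
\[
\psi(x) \; := \; \varphi(x, a_\omega) \wedge \neg\varphi(a_\omega, x),
\]
whose parameter lies in the monster. For $\alpha < \omega$ both conjuncts hold; for $\alpha = \omega$ the formula is tautologically false; for $\omega < \alpha < \omega+\omega$ the first conjunct fails. Thus
\[
\{\alpha < \omega+\omega : \models \psi(a_\alpha)\} = [0,\omega),
\]
which is neither finite nor cofinite in $\omega+\omega$, contradicting generic stability.

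For the general case, suppose the failure involves an $n$-variable formula $\chi(y_1, \ldots, y_n) \in \mathcal{L}(A)$ and an adjacent transposition $(k, k+1)$. The plan is to absorb the ``frozen'' coordinates into the base and reduce to $n = 2$ above. Since any Morley sequence over an enlarged base $B \supseteq A$ is in particular Morley over $A$, generic stability transfers from $A$ to $B$; and since $p$ is $A$-invariant, it is $B$-invariant. The remaining task is to produce, by a combination of the Standard Lemma, the long-sequence extensions of Remark \ref{rem:exmor}, and Lemmas \ref{lem:indlas2} and \ref{lem:expanding Morely sequence}, a Morley sequence over $B$ whose first two elements witness a two-variable asymmetry over $B$, to which the argument above then applies. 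This step is the main obstacle, since the frozen parameters may lie after the swapping pair in $I$; handling this cleanly is the bulk of the proof of the cited fact in \cite{AnandPredrag}.
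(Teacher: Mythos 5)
Your two-variable case is correct and is the heart of the matter: extending $I$ to a Morley sequence of order type $\omega+\omega$ and testing $\psi(x) := \varphi(x,a_\omega)\wedge\neg\varphi(a_\omega,x)$ does produce a set of satisfied indices equal to $[0,\omega)$, which violates the finite/cofinite condition of generic stability. (The paper merely cites this fact to \cite{AnandPredrag} and gives no proof, so there is no in-paper argument to compare against.)

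The reduction from $n$ variables to $2$ is where the gap lies, and your proposed fix --- ``absorb the frozen coordinates into the base'' --- fails for exactly the reason you yourself flag. If the adjacent transposition is $(k,k+1)$ and some frozen coordinates occur \emph{after} position $k+1$, those cannot be moved into the base: $a_k$ is not a realization of $p$ restricted to $A$ together with the later $a_j$'s, so $(a_k,a_{k+1})$ would no longer be a Morley pair over the enlarged base and the two-variable case would not apply. You need a different mechanism for the tail, and it is simpler than you fear, so the proof should be completed rather than deferred to the citation. Concretely: add only $B := Aa_{<k}$ to the base (this is harmless). The two-variable case over $B$ gives $\tp(a_k a_{k+1}/B)=\tp(a_{k+1}a_k/B)$, hence an automorphism $\tau$ fixing $B$ pointwise with $\tau(a_k)=a_{k+1}$ and $\tau(a_{k+1})=a_k$. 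Now $\tau(a_{>k+1})$ is a Morley sequence generated by $\tau(p)=p$ (as $p$ is $B$-invariant) over $\tau(Ba_ka_{k+1})=Ba_ka_{k+1}$, and $a_{>k+1}$ is another Morley sequence generated by $p$ over the same set. By Fact \ref{fac:sequences generated by invariant types}, any two such Morley sequences realize the same type $p^{(\omega)}\restriction Ba_ka_{k+1}$, so $\tau(a_{>k+1})\equiv_{Ba_ka_{k+1}}a_{>k+1}$. Composing $\tau$ with an automorphism over $Ba_ka_{k+1}$ that sends $\tau(a_{>k+1})$ back to $a_{>k+1}$ yields an $A$-automorphism that realizes the adjacent transposition on $(a_i)_{i<\omega}$, and since adjacent transpositions generate all finitary permutations, total indiscernibility follows.
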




\section{Preservation results}
\label{sec:results}

\subsection{Preservation under non-co-dividing extensions}

It turns out that if in Question \ref{question} we ask about co-dividing instead of dividing, this is extremely easy.  In particular this proves that the restriction of a stable type, non-forking over a model $M$, to $M$, is itself stable (this was first shown in \cite[Claim 2.19]{HassonOnshuus}).

\begin{defi}
A type $\tp(a/B)$ \emph{co-divides} (\emph{co-forks}) over $A\subseteq B$ if $\tp(B/aA)$ divides (forks) over $A$. 
\end{defi}

We will need the following well-known fact about dividing.
\begin{fact} \label{fac:equivaelnce of dividing} \cite[Corollary 7.1.5]{TZ}
    Suppose that $\tp(c/Cd)$ does not divide over $C$. Then for every infinite $C$-indiscernible sequence $J$ containing $d$, there is a $Cc$-indiscernible sequence $J'$ which is indiscernible over $Cc$ and satisfies $J'\equiv_{Cd}J$.
\end{fact}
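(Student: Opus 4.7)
The plan is to follow the classical strategy: stretch $J$ to a very long $C$-indiscernible sequence, use non-dividing together with compactness to find a single realization consistent with \emph{all} its terms simultaneously, extract via the Standard Lemma an $\omega$-subsequence indiscernible over the enlarged parameter set, and finally transport everything by $C$-automorphisms back inside the original $J$.

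First, using $C$-indiscernibility of $J$ and compactness, extend $J$ to a $C$-indiscernible sequence $\tilde{J}=(d_i)_{i<\kappa}$, where $\kappa$ is chosen large enough for the Standard Lemma to extract an $\omega$-indiscernible subsequence over any parameter set of size $\leq |C|+|c|+|T|$. Say $d=d_{i_0}$. Write $q(x,y):=\tp(cd/C)$. Since $\tp(c/Cd)$ does not divide over $C$ and this property depends only on $\tp(cd/C)$, for every finite $i_1<\dots<i_n<\kappa$ the partial type $\{q(x,d_{i_1}),\dots,q(x,d_{i_n})\}$ is consistent: by $C$-indiscernibility of $\tilde{J}$ some $C$-automorphism sends $(d_{i_1},\dots,d_{i_n})$ to an initial segment of a $C$-indiscernible sequence starting with $d$, to which non-dividing applies. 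By compactness, pick $c'$ realizing $\Pi(x):=\bigcup_{i<\kappa}q(x,d_i)$, so $c'd_i\equiv_C cd$ for all $i<\kappa$.

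Next, apply the Standard Lemma to $\tilde{J}$ over the parameter set $Cc'$ to extract an $\omega$-sequence $J^*=(d^*_n)_{n<\omega}$ that is $Cc'$-indiscernible and whose $Cc'$-EM-type is realized cofinally in $\tilde{J}$. In particular $J^*$ is $C$-indiscernible and shares its $C$-EM-type with $\tilde{J}$, hence with $J$; since two $C$-indiscernible sequences of order type $\omega$ with the same $C$-EM-type are $\equiv_C$-conjugate, we get $J^* \equiv_C J$. Moreover, since every element of $\tilde{J}$ satisfies the $2$-type $q(c',\cdot)$ over $C$ (that is, $c'd_i \equiv_C cd$), the same holds for every element of $J^*$, so $c'd^*_n\equiv_C cd$ for all $n$.

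Finally, let $\sigma\in\operatorname{Aut}(\mathfrak{C}/C)$ satisfy $\sigma(J^*)=J$ and set $c'':=\sigma(c')$. Then $J$ is $Cc''$-indiscernible, and applying $\sigma$ to $c'd^*_{i_0}\equiv_C cd$ yields $c''d\equiv_C cd$, i.e.\ $c''\equiv_{Cd}c$. Choose $\tau\in\operatorname{Aut}(\mathfrak{C}/Cd)$ with $\tau(c'')=c$ and put $J':=\tau(J)$; then $J'\equiv_{Cd}J$ and $J'$ is $Cc$-indiscernible, as required. The main (and essentially only) subtle point is the extraction step, where we must preserve both $Cc'$-indiscernibility and enough of the $C$-EM-type to remain $C$-conjugate to $J$; taking $\kappa$ sufficiently large in the first step turns this into bookkeeping rather than a genuine obstacle.
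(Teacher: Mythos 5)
Your argument is the standard proof of this dividing characterization (the paper cites Tent--Ziegler rather than proving it), and the skeleton is sound: stretch the sequence, use non-dividing plus compactness to find a single $c'$ realizing $\tp(cd/C)$ with every term simultaneously, extract over $Cc'$, and transport back by $C$- and $Cd$-automorphisms.

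There is one gap to flag. You extract $J^*$ as an $\omega$-sequence and then invoke ``two $C$-indiscernible sequences of order type $\omega$ with the same $C$-EM-type are $\equiv_C$-conjugate'' to conclude $J^*\equiv_C J$; but the hypothesis allows $J$ to be any infinite $C$-indiscernible sequence, of arbitrary order type, and if its order type is not $\omega$ then no $\sigma\in\operatorname{Aut}(\mathfrak{C}/C)$ can carry $J^*$ onto $J$, since they are tuples of different length. The repair is routine: after extracting a $Cc'$-indiscernible $\omega$-sequence, extend it by compactness to a $Cc'$-indiscernible sequence $J^*$ of the \emph{same order type as $J$}. Both the condition that $c'd^*\equiv_C cd$ for every term $d^*$ of $J^*$ and the fact that $J^*$ has the same $C$-EM-type as $J$ are recorded in the $Cc'$-EM-type, so they survive the extension, and your transport step (find $\sigma$ sending $J^*$ to $J$, set $c''=\sigma(c')$, then move $c''$ to $c$ by a $Cd$-automorphism) goes through unchanged. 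As a side remark, you do not actually need the Erd\H{o}s--Rado ``genuine subsequence'' bookkeeping and the large $\kappa$: the plain Ramsey-plus-compactness extraction, which produces a new $Cc'$-indiscernible sequence based on $\tilde J$, already preserves the $C$-EM-type and the $2$-type with $c'$ over $C$, so one may even take $\kappa=\omega$ and skip the stretching step entirely.
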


\begin{prop}\label{pro:positive1} Let $p\in S(B)$ be an NIP (stable) type which does not co-divide over $A\subseteq B$, then $p\restriction A$ is NIP (stable). \end{prop}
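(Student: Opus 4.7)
The plan is to argue by contraposition: assume $p \restriction A$ has IP (respectively, in the stable case, is unstable) and derive that $p$ itself has IP (respectively, OP), contradicting the hypothesis. The proof rests on two tools. The first is the characterization of IP for a partial type $\pi$ in Lemma \ref{lem:pnip}(3) (and the analogous Fact \ref{fac:pstable}(3) for OP), which demands an $A$-indiscernible sequence consisting of \emph{realizations of $\pi$}, together with a tuple $b$ and formula $\varphi$ witnessing the alternating (resp.\ cofinal-half) pattern. The second is Fact \ref{fac:equivaelnce of dividing}, which lets us upgrade an $A$-indiscernible sequence containing a given element to a sequence indiscernible over a larger base, provided a suitable type does not divide over $A$.

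First I would extract from the failure of NIP (resp.\ stability) of $p \restriction A$ an $A$-indiscernible sequence $(a_i)$ of realizations of $p \restriction A$, a formula $\varphi(x,y) \in \mathcal{L}$, and a tuple $b$ with the pattern $\models \varphi(a_i, b)^{(\cdots)}$. Then fix any $a \models p$; since $a_0 \equiv_A a$, an automorphism over $A$ produces $B_0$ with $a_0 B_0 \equiv_A a B$, so that $p_0 := \tp(a_0 / A B_0)$ is an $A$-conjugate of $p$. It suffices to witness IP (resp.\ OP) for $p_0$, as this property is invariant under $A$-conjugation.

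Next I would transport the non-co-dividing hypothesis along this conjugation: since $\tp(B / a A)$ does not divide over $A$, neither does $\tp(B_0 / a_0 A)$. Applying Fact \ref{fac:equivaelnce of dividing} with $c = B_0$, $d = a_0$, and the $A$-indiscernible sequence $(a_i)$ (which contains $a_0$) yields a sequence $(a'_i)$ indiscernible over $A B_0$ with $(a'_i) \equiv_{A a_0} (a_i)$. An automorphism over $A a_0$ sending $(a_i)$ to $(a'_i)$ moves $b$ to some $b'$, so the pattern $\models \varphi(a'_i, b')^{(\cdots)}$ persists. Finally, $A B_0$-indiscernibility of $(a'_i)$ together with $a'_0 = a_0 \models p_0$ forces every $a'_i$ to realize $p_0$; the triple $(a'_i)$, $b'$, $\varphi$ then witnesses IP (resp.\ OP) for $p_0$ via Lemma \ref{lem:pnip}(3) (resp.\ Fact \ref{fac:pstable}(3)), contradicting the NIP (resp.\ stability) of $p$.

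The delicate point is the choice of IP/OP characterization in the first step. One must use the form in which the indiscernible sequence lies on the side of realizations of $\pi$, not on the $b$ side, because non-co-dividing of $\tp(B/aA)$ over $A$ gives control precisely on the realization side via Fact \ref{fac:equivaelnce of dividing}, promoting $A$-indiscernibility to $AB_0$-indiscernibility; the dual form would require an analogous refinement on the parameter side, which the hypothesis does not provide.
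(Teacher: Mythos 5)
Your proof is correct and follows essentially the same route as the paper: extract an $A$-indiscernible sequence of realizations of $p\restriction A$ witnessing IP/OP via Lemma \ref{lem:pnip}(3) (resp. Fact \ref{fac:pstable}(3)), invoke Fact \ref{fac:equivaelnce of dividing} through the non-co-dividing hypothesis to upgrade the base of indiscernibility, and conjugate $b$ along. The only cosmetic difference is that you conjugate $B$ to a set $B_0$ matched to $a_0$ and work with the conjugate type $p_0$, whereas the paper simply rechooses the sequence so that $a_0$ is the given realization $a$ and works with $p$ directly; these are interchangeable.
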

\begin{proof} We show the NIP case; the proof for the stable case is exactly the same argument with OP instead IP. Let $p=\tp(a/B)$ which does not co-divide over $A\subseteq B$. If $\tp(a/A)$ has IP, then by Lemma \ref{lem:pnip} there is an $A$-indiscernible sequence $I=(a_i)_{i<\omega}$ of realizations of $\tp(a/A)$, with $a_0=a$, a tuple $b$ and a formula $\varphi(x,y)$ such that $\models\varphi(a_i,b)^{(i\,\mbox{even})}$ for all $i<\omega$. By Fact \ref{fac:equivaelnce of dividing}, there is a $B$-indiscernible sequence $I'$ such that $I'\equiv_{Aa} I$. Conjugating, we find a tuple $b'$ such that $\models\varphi(a'_i,b')^{(i\,\mbox{even})}$ for all $i<\omega$. Together we have that $I'$ is a $B$-indiscernible sequence of realizations of $\tp(a/B)$ witnessing IP for $\tp(a/B)$, a contradiction.\end{proof}

For dp-rank the situation is a bit more involved. 
\begin{prop} \label{prop:preserving dp-rank under co-forking} Suppose that $p \in S(B)$ does not co-fork over some $A \subseteq B$. Then $\dpr(p) = \dpr(p\restriction A)$. \end{prop}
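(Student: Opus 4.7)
The direction $\dpr(p) \leq \dpr(p\restriction A)$ is immediate from the monotonicity of dp-rank noted in the remark after Definition \ref{def:dp-rank}, since $p \restriction A \subseteq p$. The work therefore lies in showing $\dpr(p) \geq \dpr(p \restriction A)$, and my plan is to mirror the proof of Proposition \ref{pro:positive1}, replacing the single IP-witnessing indiscernible sequence by the array of mutually indiscernible sequences supplied by Fact \ref{fac:dp rank witnessing}.

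Suppose $\dpr(p \restriction A) \geq \kappa$ and apply Fact \ref{fac:dp rank witnessing} to obtain $A' \supseteq A$ with $|A'| \leq \kappa + |A|$, an $A'$-mutually indiscernible family $(I_\alpha)_{\alpha < \kappa}$ of infinite sequences of realizations of $p \restriction A$, and $d \models p \restriction A$ such that no $I_\alpha$ is $A'd$-indiscernible. Writing $p = \tp(a/B)$ and using $d \equiv_A a$, after applying an automorphism fixing $A$ I reduce to the case $d = a$. The key step is to use the non-co-forking hypothesis---equivalently, that $\tp(B/Aa)$ does not fork, hence does not divide, over $A$---to produce an array $(I'_\alpha)_{\alpha<\kappa}$ with $(I'_\alpha) \equiv_{A'a} (I_\alpha)$ that is moreover $A'B$-mutually indiscernible. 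This plays the role that Fact \ref{fac:equivaelnce of dividing} played in the proof of Proposition \ref{pro:positive1}.

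Granted such $(I'_\alpha)$: letting $\sigma$ be an automorphism fixing $A'a$ with $\sigma((I_\alpha)) = (I'_\alpha)$, the non-indiscernibility of each $I_\alpha$ over $A'a$ transfers to non-indiscernibility of $I'_\alpha$ over $A'a$, hence also over $A'Ba$. Combined with the $A'B$-mutual indiscernibility of $(I'_\alpha)$, this witnesses $\dpr(\tp(a/A'B)) \geq \kappa$, and by monotonicity $\dpr(p) = \dpr(\tp(a/B)) \geq \dpr(\tp(a/A'B)) \geq \kappa$, as required.

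To produce $(I'_\alpha)$, I would first invoke the extension property of non-forking to replace $B$ by a conjugate $B^* \equiv_{Aa} B$ such that $\tp(B^*/A'a)$ still does not fork over $A$, and then carry out a Ramsey/compactness extraction in the spirit of the mutually indiscernible version of the Standard Lemma (Lemma \ref{lem:lascarseq}); the non-dividing of $\tp(B^*/Aa)$ is invoked column-by-column through Fact \ref{fac:equivaelnce of dividing} in order to keep the $A'a$-type on the nose, and a compactness argument glues the column alignments into a single $A'B^*$-mutually indiscernible array. The main obstacle will be precisely this gluing step: outside NIP, non-forking does not automatically yield Lascar-invariance, so preservation of \emph{mutual} (as opposed to merely sequence-level) indiscernibility is not free. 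This is exactly why the hypothesis is strengthened from non-co-dividing, as in Proposition \ref{pro:positive1}, to non-co-forking: the extension property of non-forking is what allows the column-wise alignments with $a$ to be coordinated uniformly across all columns simultaneously.
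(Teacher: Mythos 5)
Your overall framing is reasonable, and you correctly spot both that Fact \ref{fac:dp rank witnessing} can replace honest definitions (the paper itself remarks on this after the proof) and that the passage from non-co-dividing to non-co-forking is needed to invoke the extension property of non-forking. You also correctly set up the reduction: if one produces $(I'_\alpha)$ that is $A'B$-mutually indiscernible with $(I'_\alpha) \equiv_{A'a} (I_\alpha)$, then the conclusion follows by monotonicity. But you stop short of actually producing $(I'_\alpha)$, and the step you defer to ``a compactness argument glues the column alignments into a single $A'B^*$-mutually indiscernible array'' is exactly where the real content of the proposition lives. You yourself flag this as ``the main obstacle'' without resolving it, and a column-by-column invocation of Fact \ref{fac:equivaelnce of dividing} does not glue: Fact \ref{fac:equivaelnce of dividing} delivers indiscernibility of a \emph{single} sequence over the new base, and there is no soft compactness trick that upgrades $\kappa$ separate such applications to \emph{mutual} indiscernibility over $B$ while keeping the $A'a$-type fixed. (There is also a smaller issue: you never make the preliminary reduction to the case where $p\restriction A$ is NIP, which the paper justifies via Proposition \ref{pro:positive1}; without it the conclusion is vacuously $\infty=\infty$ only after that proposition is cited.)

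The paper avoids the gluing problem entirely by a different device. First, the non-indiscernibility of each $I_\alpha$ over $a$ is compressed into the first two elements of that column (using the ict-pattern), so the whole array can be repackaged as a \emph{single} $A$-indiscernible sequence of rows-of-pairs $I = ((b_{\alpha,2i}b_{\alpha,2i+1})_{\alpha<\kappa})_{i<\omega}$ whose first element $(b_{\alpha,0}b_{\alpha,1})_{\alpha<\kappa}$ already records the pattern. The extension property of non-forking is used exactly once, to arrange that $\tp(B/a(b_{\alpha,0}b_{\alpha,1})_{\alpha<\kappa})$ does not fork over $A$, and then Fact \ref{fac:equivaelnce of dividing} is applied once to this single row sequence, giving $I'\equiv_{A(b_{\alpha,0}b_{\alpha,1})_{\alpha<\kappa}}I$ with $I'$ $B$-indiscernible. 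This gives $B$-indiscernibility of the row sequence, not yet $B$-mutual-indiscernibility of the columns $J_\alpha$. Bridging that gap is precisely where the NIP hypothesis is used: because the $b_{\alpha,i}$ are tuples of realizations of $p\restriction A$, by Fact \ref{fac:tupnip} their types over $A$ are NIP, and then one invokes the standard exercise (\cite[Exercise 4.10]{Sim}, \cite[Lemma 5.2]{Ch}) that $A$-mutually indiscernible sequences of NIP type, all lying in a single $B$-indiscernible sequence, are already $B$-mutually indiscernible. This NIP step is entirely absent from your proposal, and without it (or some substitute) the argument does not close.
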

\begin{proof}
    We may assume that $p \restriction A$ is NIP, as otherwise by Proposition \ref{pro:positive1}, $p$ has IP, so its dp-rank is $\infty$ and there is nothing to prove. 

    Suppose that $\dpr(p\restriction A) \geq \kappa$. By Remark \ref{rem:ict pattern}, this means (by compactness) that there is an ict-pattern of depth $\kappa$ as witnessed by a sequence of formulas $(\varphi_\alpha(x,y_\alpha))_{\alpha<\kappa}$, and an array of tuples $(a_{i,\alpha})_{i<\lambda,\alpha<\kappa}$ with $\lambda = |T|^+$. So for each $\eta:\kappa \to \lambda$ there is some $d_\eta \models p\restriction A$ with $d_\eta \models \varphi_\alpha(x,a_{i,\alpha})$ iff $\eta(\alpha)=i$. By a similar argument to \cite[Lemma 5.6]{Ch}, we may assume that $\tp(a_{i,\alpha}/A)$ is NIP. Namely, let $F = \Set{d_\eta}{\eta:\kappa \to \lambda}$. By honest definitions \cite[Proposition 1.1]{ArtemPierreI}, there are formulas $\theta_{\alpha,i}(x,b_{\alpha,i})$ such that $b_{\alpha,i}$ are tuples of realizations of $p\restriction A$ and such that $\theta_{\alpha,i}(F,b_{\alpha,i}) = \varphi_{\alpha}(F,a_{\alpha,i})$. We may assume that $\theta_{\alpha,i} = \theta_\alpha$ constantly. Then $(\theta_\alpha)_{\alpha<\kappa}$ and $(b_{\alpha,i})_{\alpha<\kappa,i<\omega}$ witness an ict-pattern for $p\restriction A$ (with the same $d_\eta$). Since $b_{\alpha,i}$ are tuples of realizations of $p\restriction A$ which is NIP, it follows from Fact \ref{fac:tupnip} that their type over $A$ is NIP.
    Extracting, we may assume that the sequences $((b_{\alpha,i})_{i<\omega})_{\alpha<\kappa}$ are mutually indiscernible over $A$.
    
    By applying an automorphism, and perhaps changing the $b_{\alpha,i}$'s, we can thus find some $d \models p\restriction A$ such that $d \models \theta_\alpha(x,b_{\alpha,i})$ iff $i = 0$ for all $\alpha < \kappa$ and $\tp(B/dA)$ does not fork over $A$. By taking a non-forking extension (and applying an automorphism), we may assume that $\tp(B/d(b_{\alpha,0}b_{\alpha,1})_{\alpha<\kappa})$ does not fork over $A$. Let $I = ((b_{\alpha,2i}b_{\alpha,2i+1})_{\alpha<\kappa})_{i<\omega}$, so $I$ is an $A$-indiscernible sequence. By Fact \ref{fac:equivaelnce of dividing}, there is a sequence $I' \equiv_{A(b_{\alpha,0}b_{\alpha,1})_{\alpha<\kappa}} I$ such that $I'$ is $B$-indiscernible. Write $I' = ((b'_{\alpha,2i}b'_{\alpha,2i+1})_{\alpha<\kappa})_{i<\omega}$.
    
    For $\alpha<\kappa$, let $J_\alpha = (b'_{\alpha,i})_{i<\omega}$. Then the sequences $(J_{\alpha})_{\alpha<\kappa}$ are mutually indiscernible over $A$, and the type of all elements in $J_\alpha$ over $A$ is NIP, so $(J_{\alpha})_{\alpha<\kappa}$ are mutually indiscernible over $B$ (this is an exercise left to the reader, see \cite[Exercise 4.10]{Sim} and also \cite[Lemma 5.2]{Ch}). 
    
    Clearly, no $J_\alpha$ is indiscernible over $d$, and hence $\dpr(p) \geq \kappa$. 
\end{proof}

In the proof above, we could have used Fact \ref{fac:dp rank witnessing} instead of honest definitions to get that the sequences witnessing the dp-rank have NIP types, but we thought that the argument above is a bit nicer. 

Another remark is that if the type $\tp(B/aA)$ is NIP (\textit{e.g.} if $T$ is NIP), then the proof is much easier: if $I =(I_\alpha)_{\alpha<\kappa}$ are mutually indiscernible over $A$, none of them indiscernible over $Aa$, then we may assume, by taking a non-forking extension, that $\tp(B/AaI)$ does not fork over $A$. But then by Lascar-non-splitting (see Lemma \ref{lem:lascar}), it follows that $(I_\alpha)_{\alpha<\kappa}$ are mutually indiscernible over $B$.

\begin{cor}\label{cor:positive2} 
    Let $p=\tp(a/A)$ be an NIP (stable) type which is definable over some model $M\subseteq A$. Then $p\restriction M$ is NIP (stable) and has the same dp-rank.
\end{cor}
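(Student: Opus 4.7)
The plan is to reduce to Propositions \ref{pro:positive1} and \ref{prop:preserving dp-rank under co-forking} by proving that $p$ does not co-fork over $M$; since non-co-forking implies non-co-dividing, both of those results then apply, yielding the NIP/stable preservation and the equality of dp-rank simultaneously.

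It suffices to show that for every finite tuple $c\subseteq A$, $\tp(c/Ma)$ does not fork over $M$. For this, let $p^*$ denote the (unique) global $M$-definable extension of $p$, which exists because $p$ is $M$-definable, and let $q$ be any global $M$-invariant extension of $\tp(c/M)$; such a $q$ exists since $M$ is a model, e.g.\ any coheir of $\tp(c/M)$ in $M$ works. The key input is the standard commuting lemma (see e.g.\ \cite[Chapter 2]{Sim}): since $p^*$ is $M$-definable and $q$ is $M$-invariant, $p^*\otimes q = q\otimes p^*$ as global types in $(x,y)$. Now $a\models p^*\restriction Mc$ (because $Mc\subseteq A$ and $p^*\restriction A=p$) and $c\models q\restriction M$, so $\tp(ac/M)=(q\otimes p^*)\restriction M$. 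Commuting gives $\tp(ac/M)=(p^*\otimes q)\restriction M$, whence $c\models q\restriction Ma$, so $\tp(c/Ma)=q\restriction Ma$ is $M$-invariant and in particular does not fork over $M$.

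The principal technical point is the commuting identity, and this is where $M$-definability of $p$ is essential (mere $M$-invariance would not suffice). Concretely, writing $d_\varphi$ for the $M$-definition of $\varphi(x,y,z)$ in $p^*$, one has that whenever $a\models p^*\restriction Mbd$ the formulas $\varphi(a,y,d)$ and $d_\varphi(y,d)$ are equivalent in $y$; hence both $\varphi(x,y,d)\in p^*\otimes q$ and $\varphi(x,y,d)\in q\otimes p^*$ reduce to $d_\varphi(y,d)\in q$, and the commuting identity follows.
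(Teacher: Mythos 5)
Your overall strategy is the same as the paper's: reduce to Propositions~\ref{pro:positive1} and \ref{prop:preserving dp-rank under co-forking} by showing $p$ does not co-fork over $M$. The paper does this in one line via heir--coheir duality: since $p$ is $M$-definable, $p$ is an heir of $p\restriction M$, hence $\tp(A/Ma)$ is a coheir of $\tp(A/M)$ (in particular finitely satisfiable in $M$) and so does not fork over $M$. You instead run a tensor-product computation, and that is where a genuine error appears.

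The commuting identity you invoke --- that $p^*\otimes q = q\otimes p^*$ whenever $p^*$ is $M$-definable and $q$ is $M$-\emph{invariant} --- is false. What is true is that $M$-definable types commute with types \emph{finitely satisfiable} in $M$ (\cite[Lemma 2.33 or thereabouts]{Sim}). A counterexample to your version: in DLO with $M$ any model, let both $p^*$ and $q$ be the type ``at $+\infty$'', which is $\emptyset$-definable and $M$-invariant but not finitely satisfiable in $M$. Then $(p^*\otimes q)(x,y)$ contains $x>y$ while $(q\otimes p^*)(x,y)$ contains $y>x$. Your ``concrete'' justification breaks precisely in the $q\otimes p^*$ direction: there one realizes $a\models p^*$ (over all of $\mathfrak{C}$) first and then $b\models q\restriction\mathfrak{C}a$; the equivalence $\varphi(a,y,d)\leftrightarrow d_\varphi(y,d)$ is only guaranteed for $y$ ranging over $\mathfrak{C}$ (because that is where $a$ realizes the definable type), not for the new element $b$. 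Pulling $b$ back into $\mathfrak{C}$ is exactly what finite satisfiability of $q$ in $M$ buys you, and invariance alone does not. Since you explicitly mention a coheir as one possible choice of $q$, the proof is easily repaired: fix $q$ to be a coheir and cite the correct (definable $\otimes$ finitely satisfiable) commuting lemma. With that change your argument is sound and amounts to an unpacked version of the heir--coheir duality used in the paper.
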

\begin{proof} 
    Since $p$ is definable (over $M$), it is easy to see that it does not co-fork over $M$. Just note that $p$ is a heir over $M$ so that $\tp(A/Ma)$ is a coheir over $M$ and hence it does not fork over $M$. Therefore the result follows from the propositions above.
\end{proof}

By Lemma \ref{lem:lascar} (6) and taking a global non-forking extension, we get:
 \begin{cor} \label{cor:stable types nf over models}
    If $p\in S(A)$ is a stable type which does not fork over a model $M\subseteq A$, then its restrictions to $M$ is stable.
 \end{cor}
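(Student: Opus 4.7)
The plan is to reduce to Corollary \ref{cor:positive2} by passing to a suitable global extension. The key is to use the special feature of the stable case in Lemma \ref{lem:lascar}(6), which says that non-forking over a model gives actual $M$-invariance (hence, by stability, $M$-definability).

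First, extend $p$ to a global type $\tilde{p}\in S(\mathfrak{C})$ that does not fork over $M$. Such an extension exists because $p$ does not fork over $M$ (one takes any global extension of $p$ which is non-forking over $M$; this is the standard existence of global non-forking extensions). Any extension of a stable type is stable, so $\tilde{p}$ is a stable global type non-forking over $M$.

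Next, apply Lemma \ref{lem:lascar}(6): since $\tilde{p}$ is stable and does not fork over the model $M$, it is $M$-invariant, and since stable invariant types are definable, $\tilde{p}$ is definable over $M$.

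Finally, Corollary \ref{cor:positive2} applied to the stable type $\tilde{p}$ (which is definable over the model $M\subseteq\mathfrak{C}$) yields that $\tilde{p}\restriction M$ is stable. But $\tilde{p}\restriction M = p\restriction M$ (both extend the same type on $M$ and are complete there), so $p\restriction M$ is stable. There is no real obstacle here: the only delicate point is invoking the stable-case clause of Lemma \ref{lem:lascar}(6), which is precisely what upgrades non-forking over a model to $M$-definability, putting us in the setting of Corollary \ref{cor:positive2}.
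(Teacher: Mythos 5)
Your proof is correct and follows exactly the route the paper indicates in the sentence preceding the corollary: take a global non-forking extension, use Lemma \ref{lem:lascar}(6) to conclude $M$-definability from stability plus non-forking over the model $M$, then apply Corollary \ref{cor:positive2}. No differences of substance.
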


Recall that the dp-rank is \emph{finite} if $\dpr(x=x) <n<\omega$ for some $n$. In this case, by the sub-additivity of the dp-rank \cite{KOU}, $\dpr(a/A)$ is finite for every finite tuple $a$. Let $\dpr(a/A)$ be the largest $n<\omega$ such that $\dpr(a/A) \geq n$. We say that the dp-rank is \emph{additive} if for any two finite tuples $a$, $b$ and any set $A$, $\dpr(ab/A) = \dpr(a/Ab) + \dpr(b/A)$. 
When the theory is dp-minimal and eliminates $\exists^\infty$, additivity of the dp-rank is equivalent to acl satisfying exchange by \cite[Theorem 0.3]{PierreDprank}.

The following is a very weak preservation result (of dp-rank) for non-forking extensions. 

\begin{cor} \label{cor:if dp-rank is additive then preserved} Suppose that the dp-rank is additive and finite. Then if $a$, $b$ are finite tuples and $\tp(a/Ab)$ does not fork over $A$, then $\dpr(a/A) = \dpr(a/Ab)$. \end{cor}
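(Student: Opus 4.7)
The plan is to exploit additivity of the dp-rank from two different perspectives, together with the preservation of dp-rank under non-co-forking extensions (Proposition \ref{prop:preserving dp-rank under co-forking}). Since $\dpr(T)$ is finite, every dp-rank appearing below is a natural number, so arithmetic manipulations are unproblematic.

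First, applying additivity to the pair $(a,b)$ in the order ``$a$ then $b$'' yields $\dpr(ab/A) = \dpr(a/Ab) + \dpr(b/A)$, while applying it in the order ``$b$ then $a$'' yields $\dpr(ab/A) = \dpr(b/Aa) + \dpr(a/A)$. Equating the two expressions and rearranging produces the identity
\[\dpr(a/A) - \dpr(a/Ab) = \dpr(b/A) - \dpr(b/Aa).\]

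The key observation is that the hypothesis that $\tp(a/Ab)$ does not fork over $A$ is, by the very definition of co-forking, the statement that $\tp(b/Aa)$ does not co-fork over $A$. Hence Proposition \ref{prop:preserving dp-rank under co-forking} applies to $\tp(b/Aa)$ and gives $\dpr(b/Aa) = \dpr(b/A)$. The right-hand side of the displayed identity therefore vanishes, which yields $\dpr(a/A) = \dpr(a/Ab)$.

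There is no real obstacle here; the whole argument amounts to spotting the forking/co-forking swap that unlocks Proposition \ref{prop:preserving dp-rank under co-forking}. The only point worth flagging is that without the finiteness assumption the subtraction in the displayed identity would not make sense, so additivity alone (without finiteness) would not suffice for this style of bookkeeping argument.
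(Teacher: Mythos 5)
Your argument is essentially identical to the paper's: both derive the two additivity decompositions of $\dpr(ab/A)$, observe that non-forking of $\tp(a/Ab)$ over $A$ is by definition non-co-forking of $\tp(b/Aa)$ over $A$, and then apply Proposition \ref{prop:preserving dp-rank under co-forking} to cancel the $b$-terms. The only cosmetic difference is that you spell out where the two sides of the identity come from and flag explicitly why finiteness is needed for the subtraction, which the paper leaves implicit.
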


\begin{proof}
    By additivity, we know that (*) $\dpr(a/Ab) + \dpr(b/A) = \dpr(a/A)+\dpr(b/Aa)$. As $\tp(b/Aa)$ does not co-fork over $A$, Proposition \ref{prop:preserving dp-rank under co-forking} implies that $\dpr(b/Aa) = \dpr(b/A)$. Eliminating it from both sides of (*), we are done.
\end{proof}

\subsection{Preservation of NIP and dp-rank}
In this section we prove our main positive result as discussed in the introduction, but first we have the following straightforward observation, answering Question \ref{question} positively when the theory is either NIP or NSOP. 

\begin{rk}\label{rem:nsop} Let $T$ be an NIP or NSOP theory and $p\in S(B)$ be an NIP type which does not fork over $A\subseteq B$, then $p\restriction A$ is NIP. \end{rk}
\begin{proof} If $T$ is NIP, all types are NIP.

If $T$ is NSOP, then NIP types are stable types because of Remark \ref{rem:stsop}. Hence the result follows, since we know by \cite{Gen} that stability is preserved under non-forking restriction (this will also be proved in Section \ref{sec:preservation of stability}).  \end{proof} 

The following is the main theorem of this section.

\begin{theo}\label{the:result1} Let $p\in S(\mathfrak{C})$ be an NIP global type which does not fork over $A$ and let $I = (a_i)_{i<\omega}$ be a Morley sequence generated by $p$ over $A$, then $p\restriction AI$ is NIP.\end{theo}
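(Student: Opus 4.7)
The plan is to argue by contradiction: assume $p\restriction AI$ has IP and derive that the global type $p$ must have IP, contradicting the hypothesis. By Lemma \ref{lem:pnip}(3) applied to $\pi = p\restriction AI$, there exist an $AI$-indiscernible sequence $(c_k)_{k<\omega}$ of realizations of $p\restriction AI$, a tuple $b$, and a formula $\varphi(x,y) \in \mathcal{L}$ (absorbing any parameters from $AI$ into $b$), such that $\models \varphi(c_k, b)^{(k\text{ even})}$ for all $k$.

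The first key observation is that, since $(c_k)_{k<\omega}$ is an infinite $A$-indiscernible sequence, its elements share the same $A$-Lascar-strong type. Combined with the $A$-Lascar-invariance of $p$ from Lemma \ref{lem:lascar}(1), this implies that the truth value of $\varphi(x, c_k) \in p$ is uniform in $k$; without loss of generality $\varphi(x, c_k) \in p$ for every $k < \omega$. Consequently, any realization of $p$ over a set containing $\{c_k\}_{k<\omega}$ must satisfy $\varphi(x, c_k)$ uniformly.

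The core step is then to exploit the Morley structure of $I$. I generate a Morley sequence $J = (a'_i)_{i<\omega}$ of $p$ over $A \cup \{b\} \cup \{c_k : k < \omega\}$, so that each $a'_i \models p \restriction A \cup \{b\} \cup \{c_k\}_k \cup \{a'_j\}_{j<i}$. Since Morley sequences of an $A$-Lascar-invariant type over $A$ are $\equiv^{Ls}_A$-unique (and any such Morley sequence is a Morley sequence over $A$ by Remark \ref{rem:exmor}), we have $J \equiv^{Ls}_A I$. Applying an $A$-Lascar-automorphism sending $J$ to $I$ and relabeling the witnesses accordingly, I may assume that $I$ itself is Morley for $p$ over $A \cup \{b\} \cup \{c_k\}_k$. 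In particular, each $a_i \in I$ realizes $p$ over $A \cup \{b\} \cup \{c_k\}_k \cup \{a_j\}_{j<i}$, so $\varphi(a_i, c_k)$ holds uniformly in $i, k$ by the previous step. Applying the same construction to the full shattering form of IP (Lemma \ref{lem:pnip}(2)) by generating a Morley sequence over $A \cup \{b_s : s \subseteq \omega\} \cup \{c_k\}_k$, one obtains elements $a_i \in I$ whose $\varphi$-behavior on $(c_k)$ is uniform while each $b_s$ produces pattern $\varphi(b_s, c_k)^{(k \in s)}$, yielding an IP pattern which, via compactness and the Morley product structure, transfers to a genuine IP witness for the global type $p$.

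The main obstacle I anticipate is precisely this last transfer: the tuples $b_s$ realize only $p\restriction AI$ and not the global $p$, and a direct attempt to lift $b$ to a realization $b' \models p$ with $b' \equiv_{(c_k)} b$ is inconsistent (since $\neg\varphi(x, c_k) \in \tp(b/(c_k))$ for odd $k$ while $\varphi(x, c_k) \in p$). The Morley sequence $I$ furnishes elements realizing $p$ on increasingly large bases, serving as "approximate" realizations of the global $p$, but weaving these approximations together with the shattering pattern, uniformly across all $s \subseteq \omega$, into an actual IP witness for $p$ is the delicate point and will require a careful compactness argument leveraging Lascar-invariance together with the Morley product structure established above.
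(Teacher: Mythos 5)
Your plan diverges sharply from the paper's, and unfortunately the gap you flag at the end is real and, as sketched, not fixable without a genuinely new idea.

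\textbf{Arity confusion.} You set $\varphi(x,y)$ so that $\models \varphi(c_k, b)$ with $c_k \models p\restriction AI$ in the $x$-slot and $b$ an arbitrary tuple in the $y$-slot, so $|y| = |b|$ while $|x|$ equals the arity of $p$. Then you immediately write ``the truth value of $\varphi(x, c_k) \in p$,'' ``$\varphi(a_i, c_k)$,'' and ``$\varphi(b_s, c_k)$'' --- all of which place $c_k$ in the $y$-slot and (in the latter two) the tuple $b$ or $b_s$ in the $x$-slot. These are not well-formed unless $x$ and $y$ happen to have the same length. This is not a cosmetic typo: the statement you try to extract --- that $p$ behaves uniformly on the formulas $\varphi(-,c_k)$ --- is exactly the wrong formula to be looking at given your setup.

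\textbf{The Lascar-invariance observation points the wrong way.} You correctly note that all $c_k$ have the same Lascar-strong type over $A$ and that $p$ is Lascar-invariant over $A$. But this means that $p$ cannot tell the $c_k$ apart: any realization of $p$ over a set containing all the $c_k$ will satisfy the same formulas with each $c_k$. That kills any attempt to produce an IP (alternating) pattern for $p$ directly out of the $c_k$'s. The alternation in $\varphi(c_k, b)$ comes precisely from the fact that the $c_k$ realize only $p \restriction AI$, not $p$ over $b$; replacing them by genuine realizations of $p$ over $b$ (which is what a Morley-sequence ``lift'' would produce) destroys the alternation. So the observation you identify as the ``first key step'' is actually an obstruction to your strategy, not a lever.

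\textbf{The transfer step is the whole theorem.} You acknowledge this at the end, but it is worth emphasizing: one cannot lift an IP witness from a restriction to a full type wholesale, because in general the restriction of an NIP type may well have IP (that is the content of Section~\ref{sec:counterexample}). What makes Theorem~\ref{the:result1} true is that $I$ is a Morley sequence, and this fact has to be used at the level of formulas, not types. The paper's proof uses the \emph{local} characterization of NIP (Remark~\ref{rem:pglobnip}): since $p$ is NIP there is $\sigma(x,c)\in p$ with $\varphi\land\sigma$ NIP, and a further $\chi(x,d)\in p$ with $\sigma\land\chi$ NIP. The Morley sequence is then used \emph{inductively}: one builds tuples $c_n \equiv_A^{Ls} c$ so that $\sigma(a_i,c_n)$ alternates on $i<n$, moving between even and odd positions via Lascar-strong automorphisms over $Aa_{<n}$ and exploiting Lemma~\ref{lem:indlas2}. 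The contradiction arrives at the formula level against $\sigma(x,z)\land\chi(x,d)$ being NIP. None of this inductive, formula-local machinery appears in your sketch, and without it I do not see how your ``careful compactness argument'' could be made to work; the compactness has to be applied to an increasing family of correct finite configurations, and producing those is exactly the content of the paper's Claim.

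In short: you need to go local (pick $\sigma$ and $\chi$ from $p$ using the NIP characterization by formulas) and go inductive (build the alternating pattern step-by-step along $I$, moving parameters by Lascar-strong automorphisms), rather than trying to transport the IP witness globally.
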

\begin{proof} Assume towards a contradiction that $p\restriction AI$ has IP as witnessed by $\varphi(x,y)$, \textit{i.e.} there is some $AI$-indiscernible sequence $J=(e_i)_{i<\omega}$ and $f$ realizing $p\restriction AI$ such that $\models\varphi(f,e_i)^{(i\,\mbox{even})}$ for all $i<\omega$. 
    
Since $p$ is NIP, by Remark \ref{rem:pglobnip} applied to $\varphi(x,y)$ there is a formula $\sigma(x,z):=\alpha_{\varphi}(x,z)\in\mathcal{L}$ with $\sigma(x,c)\in p$, such that $\varphi(x,y) \land \sigma(x,c)$ is NIP. Applying Remark \ref{rem:pglobnip} to $\sigma(x,z)$ we find a formula $\chi(x,u):=\alpha_{\sigma}(x,u)\in\mathcal{L}$ with $\chi(x,d)\in p$, such that $\sigma(x,z) \land \chi(x,d)$ is NIP.


 We may assume that $\models\chi(a_i,d)$ for all $i<\omega$. Indeed, let $J = (b_{i})_{i<\omega}$ be a Morley sequence generated by $p$ over $AId$. Since $p$ is non-forking over $A$ it is Lascar-invariant over $A$ by Lemma \ref{lem:lascar}, thus by Lemma \ref{lem:indlas2}, $I+J$ is $A$-indiscernible. Hence $I\equiv^{Ls}_A J$. By applying a Lascar-strong automorphism over $A$, there is some $d'$ such that $d'I\equiv^{Ls}_A dJ$. Hence, $\chi(a_i,d')$ holds for all $i<\omega$, $\chi(x,d')\in p$ (by Lascar-invariance) and $\sigma(x,z) \land \chi(x,d')$ is NIP, so replacing $d$ with $d'$ we are done.


\begin{cla} For every $n<\omega$ there is $c_n\equiv^{Ls}_{A} c$ such that for all $i<n$, $\models\sigma(a_i,c_n)^{(i\,\mbox{even})}$.\end{cla}
\begin{clmproof}
The proof is by induction on $n$. The case $n=0$ is trivial, so assume we have $\Set{c_i}{i\leq n}$ and we are going to find $c_{n+1}$.

 In case $n$ is even, let $a\models p\restriction AIc_{\leq n}$. It follows that $\models\sigma(a,c_n)$ because $\sigma(x,c_n)\in p$ as $p$ is Lascar-invariant over $A$ and $c_n\equiv^{Ls}_{A} c$. By Lemma \ref{lem:indlas2}, $I+a$ is $A$-indiscernible, hence $a\equiv^{Ls}_{Aa_{<n}}a_n$. By applying a Lascar-strong automorphism, we find $c_{n+1}$ such that $c_{n+1}\equiv^{Ls}_{Aa_{<n}}c_n$ (so also over $A$) and $\models\sigma(a_n,c_{n+1})$ as we want. Note that we still have that $\models\sigma(a_i,c_{n+1})^{(i\,\mbox{even})}$ for $i<n$ by the induction hypothesis.
 
 Assume $n$ is odd. As $a_n \equiv^{Ls}_{Aa_{<n}} f$ (by Lemma \ref{lem:indlas2}), by applying a Lascar-strong automorphism over $Aa_{<n}$ we can find $J'=(e_i')_{i<\omega}$ such that $J'a_n\equiv^{Ls}_{Aa_{<n}}Jf$. 
 Applying Lemma \ref{lem:lascarseq} (which we can since $J'$ is indiscernible over $Aa_{<n}$), there is $J''\equiv^{Ls}_{Aa_{<n}}J'$ indiscernible over $Aa_{<n}c_n$. 
 By applying a Lascar-strong automorphism, there is $c_{n+1}$ satisfying $c_{n+1}J'\equiv^{Ls}_{Aa_{<n}}c_nJ''$ and therefore $J'$ is indiscernible over $Aa_{<n}c_{n+1}$. It follows that $\models\lnot\sigma(a_n,c_{n+1})$. 
 This is because (1) $\sigma(x,c_{n+1})\land \varphi(x,y)$ is NIP, (2) $J'$ is $c_{n+1}$-indiscernible and (3) $\models\varphi(a_n,e'_i)^{(i\,\mbox{even})}$ for all $i<\omega$ by the choice of $J'$. 
 Since $c_{n+1}\equiv^{Ls}_{Aa_{<n}}c_n$ we are done by the induction hypothesis. \end{clmproof}


 By compactness there is $c_{\omega}$ satisfying $\models\sigma(a_i,c_{\omega})^{(i\,\mbox{even})}$ for all $i<\omega$. However, since $I$ is indiscernible and $\models \chi(a_i,d)$ for all $i<\omega$, we get a contradiction to the fact that $\chi(x,d) \land \sigma(x,z)$ is NIP (see Remark \ref{rem:pglobnip}).\end{proof}

The difference between the next corollary and Theorem \ref{the:result1} is that in Theorem \ref{the:result1} the sequence $I$ was of order type $\omega$.
 
\begin{cor} \label{cor:main positive NIP result}
    Let $p\in S(\mathfrak{C})$ be an NIP global type which does not fork over $A$ and let $I$ be an infinite Morley sequence generated by $p$ over $A$, then $p\restriction AI$ is NIP.
\end{cor}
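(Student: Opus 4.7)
The plan is to deduce the corollary directly from Theorem \ref{the:result1} by passing to an $\omega$-Morley subsequence of $I$. Write $I = (a_i)_{i\in X}$ for some infinite linear order $X$ (in the standard setup, an infinite ordinal). First I would pick an ascending chain $i_0 < i_1 < \cdots$ in $X$ and set $I_0 := (a_{i_n})_{n<\omega}$. Then I would verify that $I_0$ is itself a Morley sequence generated by $p$ over $A$ of order type $\omega$: $A$-indiscernibility is inherited as a subsequence of an $A$-indiscernible sequence, the generation property $a_{i_n} \models p\restriction A a_{i_0}\ldots a_{i_{n-1}}$ follows because this is a restriction of $p\restriction A a_{<i_n}$ which $a_{i_n}$ already realizes, and non-forking over $A$ passes to restrictions of the parameter set. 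Theorem \ref{the:result1} then gives that $p\restriction AI_0$ is NIP.

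For the main reduction, suppose toward a contradiction that $p\restriction AI$ has IP. By Lemma \ref{lem:pnip}(3), this is witnessed by a formula $\varphi(x,y)$, an $AI$-indiscernible sequence $(e_j)_{j<\omega}$, and some $f \models p\restriction AI$ with $\models\varphi(f,e_j)^{(j\,\text{even})}$ for all $j<\omega$. Since $AI_0 \subseteq AI$, the realization $f$ also satisfies $p\restriction AI_0$ and $(e_j)_{j<\omega}$ is in particular $AI_0$-indiscernible, so the same triple $(\varphi, (e_j), f)$ exhibits IP for $p\restriction AI_0$ --- contradicting what was just established.

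The only mild subtlety I anticipate is extracting an ascending $\omega$-chain from $X$, which is automatic in the standard well-ordered setting (cf.\ Fact \ref{fac:sequences generated by invariant types} and Remark \ref{rem:exmor}) and is really where the assumption of infiniteness enters. In the pathological case where $X$ contains no such chain, one can instead prepend an $\omega$-Morley sequence to $I$ using Lemma \ref{lem:lema} and carry out essentially the same argument; in either case no ideas beyond those of Theorem \ref{the:result1} are required.
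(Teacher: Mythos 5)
Your main route is a genuinely different and, in the case it covers, simpler argument than the paper's. You extract an ascending $\omega$-chain $I_0 \subseteq I$, observe that $I_0$ is again a Morley sequence generated by $p$ over $A$ (indiscernibility, generation and non-forking all pass to subsequences, as you correctly verify), apply Theorem~\ref{the:result1} to get $p\restriction AI_0$ NIP, and conclude because $p\restriction AI$ extends $p\restriction AI_0$ and extensions of NIP types are NIP (your detour through Lemma~\ref{lem:pnip} is just a re-derivation of this; and you should cite clause (4) rather than (3), since your witness has the realization $f$ of the type on the left and the indiscernible sequence on the right). The paper instead takes a \emph{fresh} Morley sequence $J$ of order type $\omega$ generated by $p$ over $AI$, applies Theorem~\ref{the:result1} to $J$, and then uses the Lascar-invariance of $p$ together with Lemma~\ref{lem:indlas2} (that $I+J$ is $A$-indiscernible) to move the tuple $c \in AJ$ appearing in the witness $\alpha_\varphi(x,c)$ to some $c' \equiv^{Ls}_A c$ lying inside $AI$. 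When $I$ contains an ascending $\omega$-chain, your argument avoids the Lascar-strong-type manoeuvre entirely, which is a real simplification.

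However, your argument does not cover all infinite $I$, and the fallback you sketch does not close the gap. An infinite linear order need not contain an ascending $\omega$-chain — e.g.\ a reversed infinite ordinal — and Morley sequences of such order types do arise (for instance, the decreasing sequence at $+\infty$ in DLO, reindexed by $\omega^*$, is a Morley sequence in the paper's sense); the paper's treatment of Theorem~\ref{the:preserving dp-rank} explicitly handles ``$I$ any infinite sequence (not necessarily of order type $\omega$)'', so this case is in scope. In that situation, prepending an $\omega$-Morley sequence $J$ so that $J+I$ is a Morley sequence (the right tool for this is Lemma~\ref{lem:expanding Morely sequence} with $I_1=\emptyset$, not Lemma~\ref{lem:lema}, which appends after a single point) does not help: any ascending $\omega$-chain in $J+I$ lies in $J$ up to finitely many terms, so applying Theorem~\ref{the:result1} to it gives that $p\restriction AJ_0$ is NIP for some $J_0\subseteq J$ — and $p\restriction AI$ is \emph{not} an extension of $p\restriction AJ_0$, so the extension argument is unavailable. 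Closing the gap requires exactly the step you left out: moving the parameter of the NIP-witnessing formula from $AJ_0$ into $AI$ via Lascar-invariance of $p$ and indiscernibility of $J+I$, which is precisely what the paper's proof does. So the proposal is correct and cleaner on the (common) case of well-ordered $I$, but incomplete in general.
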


\begin{proof}
    Let $J=(a_i)_{i<\omega}$ be a Morley sequence generated by $p$ over $AI$. By Theorem \ref{the:result1}, we know that $p \restriction AJ$ is NIP. Let $\varphi(x,y)$ be a formula. By Remark \ref{rem:pglobnip}, there is a formula $\alpha_{\varphi}(x,z)$ and a tuple $c \in AJ$ such that $\alpha_{\varphi}(x,c)\in p$ and $\alpha_{\varphi}(x,c) \land \varphi(x,y)$ is NIP. As $I+J$ is indiscernible over $A$ (by Lemma \ref{lem:indlas2}), ``moving'' $c$ into $AI$ we find some $c' \equiv^{Ls}_A c$ such that $c'\in AI$. Thus  $\alpha_{\varphi}(x,c') \land \varphi(x,y)$ is NIP and $\alpha_{\varphi}(x,c') \in p$ (by Lascar-invariance). 
    Since $\varphi$ was arbitrary, this proves that $p \restriction AI$ is NIP by Remark \ref{rem:pglobnip}.
\end{proof}

\begin{cor}\label{cor:result2}  Let $p\in S(B)$ be an NIP type which does not fork over $A\subseteq B$. If there is a Morley sequence $I\subseteq B$ generated by $p$ over $A$, then $p\restriction AI$ is NIP. \end{cor}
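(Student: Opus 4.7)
The statement to prove is essentially the ``local'' version (i.e.\ for types over sets rather than global types) of Corollary \ref{cor:main positive NIP result}. So the natural plan is to lift $p$ to a global non-forking extension $\tilde{p}\in S(\mathfrak{C})$ and reduce to the already-proved global case.

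\textbf{Plan.} First, use the hypothesis that $p$ does not fork over $A$ to choose a global extension $\tilde{p} \in S(\mathfrak{C})$ of $p$ that does not fork over $A$ (this is standard: the set of formulas over $\mathfrak{C}$ that divide over $A$ has the required closure properties, so any type non-forking over $A$ extends to a global one). Since $p$ is NIP and NIP is preserved under extensions (as already noted after the definition of NIP types, because realizations of the extension realize the smaller type), $\tilde{p}$ is also NIP.

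Second, verify that $I$ remains a Morley sequence generated by $\tilde{p}$ over $A$. This is immediate: being Morley over $A$ (i.e.\ $A$-indiscernible and $A$-independent) is an intrinsic property of the sequence, independent of which type ``generated'' it; and since $p \subseteq \tilde{p}$, the condition $a_i \models p \restriction A a_{<i}$ upgrades automatically to $a_i \models \tilde{p} \restriction A a_{<i}$ for each $i$. So we can apply Corollary \ref{cor:main positive NIP result} to conclude that $\tilde{p} \restriction AI$ is NIP.

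Finally, observe that $AI \subseteq B$, so for every formula $\varphi(x) \in \mathcal{L}(AI)$ we have $\varphi \in \tilde{p}$ iff $\varphi \in p$; thus $p \restriction AI = \tilde{p} \restriction AI$, and the latter is NIP. No hard obstacle is expected here; the only thing to double-check is that extending to a global non-forking type and restricting back genuinely preserves the type on $AI$, which follows tautologically from $AI \subseteq B$.
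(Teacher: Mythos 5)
Your proof is correct and is exactly the paper's proof, which simply says ``By Corollary \ref{cor:main positive NIP result}, taking a global non-forking extension of $p$.'' You have spelled out the routine verifications (that the global extension is still NIP, that $I$ remains a Morley sequence for it, and that the restriction to $AI\subseteq B$ is unchanged), all of which are accurate.
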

\begin{proof} By Corollary \ref{cor:main positive NIP result}, taking a global non-forking extension of $p$. \end{proof} 

Next we generalize Corollary \ref{cor:main positive NIP result} for dp-rank. 

\begin{theo} \label{the:preserving dp-rank} Let $p\in S(B)$ be a type which does not fork over $A\subseteq B$. If there is a Morley sequence $I\subseteq B$ generated by $p$ over $A$, then $\dpr(p\restriction AI) = \dpr(p)$.
\end{theo}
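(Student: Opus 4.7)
Since $p \supseteq p\restriction AI$, the inequality $\dpr(p) \leq \dpr(p\restriction AI)$ is immediate from the monotonicity of dp-rank noted after Definition \ref{def:dp-rank}. For the reverse inequality: if $p\restriction AI$ has IP, then by the contrapositive of Corollary \ref{cor:result2} so does $p$, and both dp-ranks are $\infty$ by Remark \ref{rem:dp-rank infinite -> IP}. So I may assume $p\restriction AI$ (and hence $p$) is NIP, fix a global $A$-non-forking extension $\tilde p$ of $p$ (NIP and Lascar-invariant over $A$ by Lemma \ref{lem:lascar}), and note that $\tilde p\restriction AI = p\restriction AI$ and $\dpr(p) \geq \dpr(\tilde p)$ (since $\tilde p \supseteq p$). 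It therefore suffices to prove the global version $\dpr(\tilde p) \geq \dpr(\tilde p\restriction AI)$.

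Suppose $\dpr(\tilde p\restriction AI) \geq \kappa$. By Fact \ref{fac:dp rank witnessing} obtain $A' \supseteq AI$, mutually $A'$-indiscernible infinite sequences $(J_\alpha)_{\alpha<\kappa}$ of realizations of $\tilde p\restriction AI$, and $d \models \tilde p\restriction AI$ with no $J_\alpha$ being $A'd$-indiscernible. For each $\alpha<\kappa$ fix a formula $\phi_\alpha(z,\bar y_\alpha)$ and same-order-type finite tuples $\bar e_\alpha, \bar e'_\alpha$ from $J_\alpha$ witnessing this: $\phi_\alpha(d,\bar e_\alpha) \wedge \neg\phi_\alpha(d,\bar e'_\alpha)$. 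Applying Remark \ref{rem:pglobnip} twice to $\tilde p$ and each $\phi_\alpha$ (viewed as a formula in the variable $z$), obtain formulas $\sigma_\alpha(z,w_\alpha)$ and $\chi_\alpha(z,u_\alpha)$ with $\sigma_\alpha(z,c_\alpha), \chi_\alpha(z,d_\alpha) \in \tilde p$ such that both $\phi_\alpha\wedge\sigma_\alpha$ and $\sigma_\alpha\wedge\chi_\alpha$ are NIP in $z$.

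The main construction adapts the proof of Theorem \ref{the:result1} simultaneously across the $\kappa$ rows. First, take a single Morley sequence $J$ of $\tilde p$ over $AI \cup \{d_\alpha : \alpha<\kappa\}$; since $I+J$ is $A$-indiscernible by Lemma \ref{lem:indlas2}, a single Lascar-strong automorphism over $A$ sending $J$ to $I$ allows us to arrange that $\chi_\alpha(a_i,d_\alpha)$ holds for every $a_i \in I$ and every $\alpha<\kappa$ simultaneously. Then, for each $\eta:\kappa\to\omega$, the inductive construction from the claim in Theorem \ref{the:result1}, run in parallel across $\alpha$ and indexed by $n$, produces parameters $c^\eta_\alpha \equiv^{Ls}_A c_\alpha$ realizing the alternating pattern ``$\sigma_\alpha(a_i,c^\eta_\alpha)$ iff $i = \eta(\alpha)$'' along $I$; the NIP of each $\sigma_\alpha\wedge\chi_\alpha$ together with the uniform validity of $\chi_\alpha$ on $I$ prevents any blow-up. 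Compactness across $\alpha$, $n$, and $\eta$ then yields an ict-pattern of depth $\kappa$ realized in $\tilde p$, built from the $\phi_\alpha$'s and appropriate tuples from the $(J_\alpha)$'s (extracted if necessary via the moreover part of Lemma \ref{lem:lascarseq}), giving $\dpr(\tilde p)\geq\kappa$ by Remark \ref{rem:ict pattern}.

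The main obstacle will be coordinating the $\kappa$ per-row Lascar-strong constructions of Theorem \ref{the:result1} into a simultaneous argument while preserving mutual indiscernibility throughout; the moreover clauses of Lemmas \ref{lem:lascarseq} and \ref{lem:lema}, applied to the full array of mutually indiscernible sequences, are precisely designed to enable this uniform treatment across all $\alpha<\kappa$.
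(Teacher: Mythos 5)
Your opening reductions are correct and match the paper: monotonicity gives $\dpr(p)\leq\dpr(p\restriction AI)$, Corollary \ref{cor:result2} handles the IP case, and passing to a global $A$-Lascar-invariant extension $\tilde p$ is the right move.

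The core of the argument, however, has a genuine gap. The paper's proof is a \emph{proof by contradiction}: it assumes $\dpr(p)<\kappa$, and this hypothesis is used essentially in a compactness step --- the partial type $\Gamma(x,(z_\alpha)_{\alpha<\kappa})$ union $p(x)$ is inconsistent \emph{because} $\dpr(p)<\kappa$, and this inconsistency extracts a \emph{single} formula $\sigma(x,c)\in p$ together with only \emph{finitely many} pairs $(\varphi_k,I_k)_{k<l}$ satisfying (*). Only then does one apply Remark \ref{rem:pglobnip} (once) to $\sigma$ to get $\chi(x,d)$, and run the claim of Theorem \ref{the:result1} to build the \emph{alternating} pattern $\models\sigma(a_i,c_\omega)^{(i\,\mathrm{even})}$, which contradicts NIP of $\chi(x,d)\wedge\sigma(x,z)$. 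Your plan is instead a \emph{direct} argument: you apply Remark \ref{rem:pglobnip} separately to each $\phi_\alpha$, producing $\kappa$-many pairs $(\sigma_\alpha,\chi_\alpha)$, and propose to run ``the inductive construction of Theorem \ref{the:result1} in parallel across $\alpha$'' to produce a one-hot pattern ``$\sigma_\alpha(a_i,c^\eta_\alpha)$ iff $i=\eta(\alpha)$'' for each $\eta:\kappa\to\omega$. But the claim in Theorem \ref{the:result1} produces an alternating even/odd pattern for a single $\sigma$, and its proof is engineered precisely to reach a contradiction with NIP of $\sigma\wedge\chi$; it does not deliver arbitrary prescribed patterns, and you do not explain how to modify the odd-case automorphism argument to pick out index $\eta(\alpha)$ rather than alternate. (You even call the pattern ``alternating'' and then write it one-hot.) Moreover, even granting the $c^\eta_\alpha$'s, you then claim the ict-pattern for $\tilde p$ is ``built from the $\phi_\alpha$'s and tuples from the $J_\alpha$'s,'' but your construction produced parameters $c^\eta_\alpha$ paired with $a_i\in I$ via $\sigma_\alpha$ --- not realizations $b_\eta\models\tilde p$ paired with tuples from the $J_\alpha$'s via $\phi_\alpha$. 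The roles of type-variable and parameter are not lined up, and no translation is given. You explicitly flag ``the main obstacle'' at the end but do not resolve it.

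Two smaller omissions: you invoke Fact \ref{fac:dp rank witnessing}, which the paper does not need here and which changes the base to a larger $A'$ without explanation of why that is harmless; and you do not treat the case of $I$ of general order type, which the paper handles at the end using Lemma \ref{lem:expanding Morely sequence} and the cut-and-splice argument of Corollary \ref{cor:main positive NIP result}. The essential missing idea is the $\Gamma\cup p$ inconsistency argument: it is exactly what collapses the $\kappa$ rows to one formula $\sigma(x,c)$ and finitely many sequences, making the Theorem \ref{the:result1}-style induction applicable at all.
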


Before the proof, note that By Remark \ref{rem:dp-rank infinite -> IP}, this theorem implies Corollary \ref{cor:main positive NIP result}. Even though Corollary \ref{cor:main positive NIP result} will not be used in the proof, we thought it is better to include both proofs since the proof of Theorem \ref{the:preserving dp-rank} is basically the same as that of Corollary \ref{cor:main positive NIP result} with some extra complications. 

\begin{proof}[Proof of Theorem \ref{the:preserving dp-rank}]
    By taking a global non-forking extension, we may assume that $p$ is global. First we prove it in the case when $I = (a_i)_{i<\omega}$. 

    Suppose that $\dpr(p \restriction AI) \geq \kappa$ as witnessed by $f \models p \restriction AI$ and $(I_{\alpha})_{\alpha<\kappa}$, while $\dpr(p) < \kappa$. 
    Since $I_\alpha$ is not indiscernible over $AIf$, there is a formula $\varphi_\alpha(x,y_\alpha)$ over $AI$ and two increasing sequences $e_{\alpha,0},e_{\alpha,1}$ witnessing this: $\models \varphi_\alpha(f,e_{\alpha,0})\land \neg \varphi_\alpha(f,e_{\alpha,1})$.
    Let $\Gamma(x,(z_\alpha)_{\alpha<\kappa})$ be the (global) partial type in infinitely many variables saying that $(z_\alpha)_{\alpha < \kappa}$ is a sequence of $\mathfrak{C}$-mutually indiscernible sequences of the same type as $(I_\alpha)_{\alpha <\kappa}$ over $\emptyset$ such that each $z_\alpha$ is not indiscernible over $AIx$ as witnessed by $\varphi_\alpha$ and $z_{\alpha,0},z_{\alpha,1}$ as above (so $z_{\alpha,0},z_{\alpha,1}$ correspond to the same location that $e_{\alpha,0},e_{\alpha,1}$ has). 
    As $\dpr(p) < \kappa$, $\Gamma \cup p(x)$ is inconsistent, which means that there is some formula $\sigma(x,c)$ in $p$, and finitely many sequences and formulas which without loss of generality are $(\varphi_k,I_k)_{k<l}$ such that (*) we cannot find $f' \models \sigma(x,c)$ and $c$-mutually indiscernible sequences $(J_k)_{k<l} \equiv (I_k)_{k<l}$ such that each $J_k$ is not indiscernible over $f'AI$ as witnessed by $\varphi_k$ and the appropriate tuples. Since there are only finitely many formulas, they use only finitely many parameters from $I$, so by incorporating those parameters into $A$ and re-enumerating $I$, we can assume that all of those formulas are over $A$.
    It follows that if $c' \equiv_A c$ then (*) is true with $c$ replaced by $c'$ as well.     

    Note that if $p$ has IP, there is nothing to prove since the same is true for any restriction, so we may assume that $p$ is NIP, in which case, by Remark \ref{rem:pglobnip}, there exists a formula $\chi(x,u):=\alpha_{\sigma}(x,u)$  and some tuple $d$ such that $\chi(x,d) \in p$ and $\chi(x,d) \land \sigma(x,u)$ is NIP, as in the proof of Theorem \ref{the:result1}. As in that proof, we may assume that for every $i<\omega$, $\chi(a_i,d)$ holds. 

    Now the proof proceeds similarly to the proof of Theorem \ref{the:result1}. Namely, we prove the same claim as in there: by induction on $n<\omega$, we find $c_n$ such that $c_n\equiv^{Ls}_{A} c$ and for all $i<n$, $\models\sigma(a_i,c_n)^{(i\,\mbox{even})}$.

    The proof of the claim is similar, the even case is exactly the same, and in the odd case, letting $J = (I_k)_{k<l}$, we can repeat the same proof, this time using the ``moreover'' part of Lemma \ref{lem:lascarseq}. 

    Finally, if $I$ is any infinite sequence (not necessarily of order type $\omega$), then we apply the same proof of Corollary \ref{cor:main positive NIP result} with some slight complications. Start by forgetting the notation from the argument above. 
    
    As $I$ is infinite, we can write $I = I_1 + I_2$ such that either $I_1$ has no end or $I_2$ has no beginning (one of them could be empty). 
    By compactness and applying an automorphism, we can find a sequence $J=(a_i)_{i<\omega}$ such that $I' = I_1 + J + I_2$ is $A$-indiscernible. By Lemma \ref{lem:expanding Morely sequence}, it follows that $I'$ is a Morley sequence generated by $p$ over $A$. 
    By the previous case, it is enough to show that $\dpr(p \restriction AI) = \dpr(p \restriction AJI)$. 
    
    Suppose that $\dpr(p \restriction AJI) < \kappa$ while $\dpr(p \restriction AI) \geq \kappa$ as witnessed by a sequence $(I_\alpha)_{\alpha<\kappa}$ of $AI$-mutually indiscernible sequences, a realization $d \models p \restriction AI$, formulas $\varphi(x,y_\alpha)$ and tuples as above. Applying the same compactness argument as above, we find some $\sigma(x,c) \in p\restriction AIJ$ and finitely many sequences and formulas which without loss of generality are $(\varphi_k,I_k)_{k<l}$ such that (*) from above holds. 
    Let $A_0 = AI_*$ where $I_* \subseteq I$ is some finite subset, containing all parameters appearing in the formulas $(\varphi_k)_{k<l}$.
    Note that if $c' \equiv_{A_0} c$ then (*) holds for $c'$.

    By indiscernibility and choice of $I_1$ and $I_2$, ``moving'' $c$ into $AI$ we can find some $c' \equiv^{Ls}_{A_0} c$ such that $c'\in AI$. So $\sigma(x,c') \in p \restriction AI$ and (*) holds for $c'$. But then the array $(I_\alpha)_{\alpha<\kappa}$ and $d$ contradicts (*) for $c'$. 
\end{proof}


\subsection{Some applications} \label{sec:applications}

As promised in the introduction, we give some applications. 

In the first one we prove the local version of the following fact, see \cite[Proposition 2.11]{HP}: in an NIP theory if $p$ is a global NIP type then $p$ is Lascar-invariant over $A$ if and only if $p$ is Kim-Pillay invariant over $A$. 
Trying to mimic the proof from \cite{HP} we realized that we need more than just $p$ being NIP, and that Theorem \ref{the:result1} is exactly what we need to complete the argument. 

\begin{cor} \label{cor:Lascar=KP} Let $N \models T$ be Lascar complete over $A$ and $p\in S(N)$ an NIP type. Then $p$ does not Lascar-split over $A$ if and only if $p$ does not KP-splits over $A$. \end{cor}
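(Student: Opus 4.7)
The direction $(\Leftarrow)$ is immediate: since every Lascar-strong type refines the corresponding Kim--Pillay strong type, if $b \equiv^{Ls}_A c$ then $b \equiv^{KP}_A c$, so any pair witnessing a Lascar-splitting of $p$ also witnesses a KP-splitting. Hence non-KP-splitting implies non-Lascar-splitting.

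For the nontrivial direction $(\Rightarrow)$, assume $p$ is NIP and does not Lascar-split over $A$. The plan is to reduce to a local analogue of \cite[Proposition 2.11]{HP}. First, because $N$ is Lascar-complete over $A$, Lemma \ref{lem:lascar}(3) produces a unique global type $\tilde p$ extending $p$ which does not Lascar-split (equivalently, is Lascar-invariant) over $A$. Second, $\tilde p$ is automatically NIP: if $(a_i)_{i<\omega}$, $(b_s)_{s \subseteq \omega}$ and $\varphi(x,y)$ were to witness IP of $\tilde p$ in the sense of Lemma \ref{lem:pnip}(1), each $a_i$ would realise $p$ as well, yielding IP of $p$ and contradicting the hypothesis. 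It therefore suffices to show the following local version of HP: any global NIP type which is $A$-Lascar-invariant is $A$-KP-invariant. Restricting to $N$ then forces $p$ to be non-KP-splitting.

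We mimic HP's argument. Suppose, toward a contradiction, that there exist $b, c$ with $b \equiv^{KP}_A c$ and $\varphi(x, b), \neg\varphi(x, c) \in \tilde p$; Lascar-invariance forces $b \not\equiv^{Ls}_A c$. By Remark \ref{rem:pglobnip} applied to $\tilde p$, pick $\alpha(x, d) \in \tilde p$ such that $\psi(x; y, z) := \varphi(x, y) \wedge \alpha(x, z)$ is NIP, and let $I = (a_i)_{i<\omega}$ be a Morley sequence of $\tilde p$ over $A b c d$. By Theorem \ref{the:result1}, $\tilde p \restriction AI$ is still NIP; this is precisely the new ingredient replacing the global NIP hypothesis on $T$ in the original HP argument, and it is what allows us to absorb $I$ into the base without losing NIP control. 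Exploiting $b \equiv^{KP}_A c$, which forbids separation of $b$ and $c$ by any bounded $A$-type-definable equivalence relation, together with Lemma \ref{lem:lascarseq} (which lets us extract indiscernible sequences over an arbitrary base while preserving the Lascar-strong type over $A$), one constructs an $A$-indiscernible parameter sequence together with a realisation of $\tilde p$ that witnesses IP of $\psi$, contradicting its NIP character.

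The main obstacle is this final combinatorial construction. Unlike Lascar-equivalence, KP-equivalence of $b$ and $c$ does not by itself produce an $A$-indiscernible sequence alternating $b$ and $c$ (such a sequence would certify $b \equiv^{Ls}_A c$); the challenge is to turn the bounded-type-definable collapse that KP delivers into a genuine alternating indiscernible pattern reaching a realisation of $\tilde p$. Theorem \ref{the:result1} is precisely what allows this to be done: because $\tilde p \restriction AI$ remains NIP, we can work over the enlarged base containing the Morley sequence and invoke NIP-style compactness/Ramsey arguments exactly where \cite{HP} uses NIP of the full theory, thereby completing the adaptation.
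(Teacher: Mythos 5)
Your reduction to showing that the global Lascar-invariant extension $\tilde p$ is $A$-KP-invariant is fine, and you have correctly located Theorem \ref{the:result1} as the new technical input. But the heart of the proof is exactly the step you leave open, and the approach you sketch there does not close the gap. You propose to take $b \equiv^{KP}_A c$ with $\varphi(x,b),\neg\varphi(x,c)\in\tilde p$, absorb a Morley sequence $I$ into the base (Theorem \ref{the:result1} keeps $\tilde p\restriction AI$ NIP), and then somehow manufacture an $A$-indiscernible alternating pattern that contradicts NIP. As you yourself observe, KP-equivalence of $b$ and $c$ does not supply such an alternating indiscernible sequence — if it did, $b$ and $c$ would already be Lascar-equivalent — so the plan as stated cannot be carried out, and ``Theorem \ref{the:result1} allows this to be done'' is an assertion, not an argument.

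The idea that actually makes the proof work is different and is not in your sketch. For a global type $q$ that is Lascar-invariant over $A$, any two realizations of $q\restriction A$ have Lascar distance at most $2$ over $A$ (they can be joined by a common realization of $q$ over both); hence $\equiv^{Ls}_A$, restricted to the type-definable set of realizations of $q\restriction A$, is itself type-definable and bounded, and therefore coincides with $\equiv^{KP}_A$ there. One then runs the Newelski/Casanovas-style argument (\cite[Proposition 5 and Lemmas 5--6]{Cas14}): to see $\tau(\tilde p)=\tilde p$ for a KP-strong $\tau$ over $A$, it suffices (by the adapted Lemma 6) to check $a\equiv^{Ls}_A\tau(a)$ for $a\models \tilde p^{(n)}\restriction A$, and this is immediate from the previous observation applied to $q=\tilde p^{(n)}$ since $a\equiv^{KP}_A\tau(a)$. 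The place where Theorem \ref{the:result1} enters is inside the proof of the adapted Lemma 6 via Lemma 5: one needs $\tilde p\restriction AI$ to be NIP for $I\models \tilde p^{(\omega)}\restriction A$ in order to conclude that two $A$-Lascar-invariant global types agreeing over $AI$ are equal. Without the observation that $\equiv^{Ls}_A$ becomes type-definable (and hence collapses to $\equiv^{KP}_A$) on realizations of a Lascar-invariant type, you cannot convert the KP-hypothesis into the Lascar-type input that the NIP machinery needs, which is precisely the obstruction you ran into.
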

\begin{proof} 
    The direction from right to left is always true since having the same Lascar-strong type implies having the same Kim-Pillay strong type.
    Assume $p$ does not Lascar-split over $A$. Since $N$ is Lascar-complete, we can extend $p$ to a global type which is Lascar-invariant over $A$ (Lemma \ref{lem:lascar}), and this type will still be NIP. Hence we may assume that $p$ is global. We follow the presentation in \cite{Cas14}, namely Proposition 5 there. We summarize the adaptations of the auxiliary results, which appear in \cite{Cas14}, the proofs are the same as there. 
    
    We assume that $\tau$ is a KP-strong automorphism over $A$, and we wish to show that $\tau(p)=p$. For this we want to use the following lemma, which is Lemma 6 from \cite{Cas14} adapted.

    $\bullet$ Let $p$ be a global NIP type, Lascar-invariant over $A$ and let $\tau\in Aut(\mathfrak{C}/A)$, if for each $n<\omega$ and $a\models p^{(n)}\restriction A$ we have $a\equiv_A^{Ls}\tau a$, then $\tau(p)=p$.
 
    Lemma 6 there follows from Lemma 5 (the key step there) which becomes:

    $\bullet$ Let $p_1, p_2$ be global types, Lascar-invariant over $A$ and let $I\models p^{(\omega)}_1\restriction A$ be such that $p_1\restriction AI=p_2\restriction AI$ is NIP, then $p_1=p_2$.
    
    When we use Lemma 5 in the proof of Lemma 6, applying Theorem \ref{the:result1} we precisely get that $p \restriction AI$ is NIP and the proof goes through.
    
    Once we have Lemma 6, we need to show that for any  $a\models p^{(n)}\restriction A$, $\tau(a) \equiv_A^{Ls} a$. We apply Theorem 4 from there which remains the same:

    $\bullet$ If $q\in S(\mathfrak C)$ is Lascar-invariant over $A$, then for every $a,b\models q$ we have $a\equiv_A^{Ls}b$ if and only if $d_A(a,b)\leq 2$. 

    In particular, this is true for $q = p^{(n)}$. Finally, since $a \equiv_A^{KP} \tau(a)$, we are done.
    
    (Note that we use the fact that having the same KP-strong type over $A$ refines any $A$-type-definable equivalence relation on an $A$-definable type, see \cite[Proposition 15.25]{cassanovasBook}, so that since $\mathord{\equiv^{Ls}_A}$ is type-definable on realizations of $q \restriction A$, it equals $\equiv_A^{KP}$ there.)
\end{proof} 


Next we show that if $p$ has the property that every (some) Morley sequence generated by $p$ over $A$ is totally indiscernible, then the answer to Question \ref{question} is positive, and moreover, the dp-rank is preserved.

We first show the following (this is probably well-known, but we give a proof). 
\begin{prop} \label{pro:totally indiscernible over}
    Suppose that $p(x) \in S(A)$ is an NIP type and that $I$ is an infinite $A$-indiscernible sequence of realizations of $p$. Then, if $I$ is totally indiscernible over $\emptyset$ then it is totally indiscernible over $A$. 
\end{prop}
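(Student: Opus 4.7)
The plan is to argue by contrapositive: suppose $I=(a_\xi)_{\xi\in X}$ is an $A$-order-indiscernible sequence of realizations of the NIP type $p$ which is totally indiscernible over $\emptyset$ but \emph{not} over $A$; I will derive IP for $p$, contradicting the hypothesis.

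First, I would extract a binary ``swap'' witness. Since $S_n$ is generated by adjacent transpositions, the failure of total indiscernibility over $A$ yields a formula $\Phi(x,y,\bar{z},\bar{u})\in\mathcal{L}$, parameters $\bar{c}\in A$, an increasing tuple $a_{i_1}<\dots<a_{i_n}$ in $I$ and a position $k<n$ such that swapping the $k$-th and $(k+1)$-th arguments of $\Phi(a_{i_1},\dots,a_{i_n},\bar{c})$ changes truth. Setting $\bar{b}:=(a_{i_1},\dots,a_{i_{k-1}},a_{i_{k+2}},\dots,a_{i_n})$, the crucial point is that because $i_k$ and $i_{k+1}$ are adjacent in the increasing enumeration they lie in the \emph{same} gap $(i_{k-1},i_{k+2})$ of the indices of $\bar{b}$ in $X$; hence $A$-order-indiscernibility delivers $\Phi(a_\xi,a_\eta,\bar{b},\bar{c})\wedge\neg\Phi(a_\eta,a_\xi,\bar{b},\bar{c})$ uniformly for all $\xi<\eta$ in that gap. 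Using the Standard Lemma and compactness---together with the observation that the $\emptyset$-EM-type of $I$ is symmetric (being totally indiscernible over $\emptyset$), so that any realization of it is again totally indiscernible over $\emptyset$---I may extend $I$ so that this gap becomes an infinite sub-interval $L$ with infinitely many elements on each side of any chosen point, while preserving $A$-indiscernibility, total indiscernibility over $\emptyset$, and realization of $p$ by every element.

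Next, I would code arbitrary finite subsets of $\omega$ using set-bijections of $I$. Fix $\xi_0\in L$ and a countable sequence $(\xi_i)_{i<\omega}$ of distinct elements of $L\setminus\{\xi_0\}$. For each finite $s\subseteq\omega$ choose a set-bijection $\rho_s$ of $X$ with $\rho_s(\xi_0)=\xi_0$, $\rho_s^{-1}(\xi_i)\in L$, and $\rho_s^{-1}(\xi_i)<\xi_0$ iff $i\in s$---possible by the two-sided infiniteness of $L$ at $\xi_0$. Total indiscernibility of $I$ over $\emptyset$ makes $a_\xi\mapsto a_{\rho_s(\xi)}$ a $\emptyset$-elementary map on $I$, which extends by strong homogeneity of $\mathfrak{C}$ to a $\emptyset$-automorphism $\sigma_s\in\mathrm{Aut}(\mathfrak{C})$. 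Setting $d_s:=(a_{\xi_0},\sigma_s(\bar{b}),\sigma_s(\bar{c}))$ and applying $\sigma_s^{-1}$ inside the formula yields
\[
\Phi(a_{\xi_i},a_{\xi_0},\sigma_s(\bar{b}),\sigma_s(\bar{c}))\;\Leftrightarrow\;\Phi(a_{\rho_s^{-1}(\xi_i)},a_{\xi_0},\bar{b},\bar{c}),
\]
which by the first step holds iff $\rho_s^{-1}(\xi_i)<\xi_0$, iff $i\in s$. Therefore the parameter-free formula $\Psi(x;y_0,\bar{v},\bar{w}):=\Phi(x,y_0,\bar{v},\bar{w})\in\mathcal{L}$, the sequence $(a_{\xi_i})_{i<\omega}$ of realizations of $p$, and the parameter tuples $(d_s)$ witness IP for $p$---contradicting NIP of $p$.

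The hard part will be the first step: noticing that reducing to an \emph{adjacent} transposition automatically places the swap pair inside a common gap of the context (so that $A$-order-indiscernibility gives a uniform asymmetry on a whole gap), and checking that the Standard Lemma extension simultaneously preserves $A$-indiscernibility, total indiscernibility over $\emptyset$ (via symmetry of the $\emptyset$-EM-type), and the property that every element realizes $p$. Once that is set up, the rest is a clean coding argument: the $\emptyset$-automorphisms of $\mathfrak{C}$ supplied by total indiscernibility over $\emptyset$ let us realize every finite subset of $\omega$ as a Boolean pattern against $(a_{\xi_i})$, which is precisely IP for $p$.
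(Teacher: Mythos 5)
Your proof is correct, and it takes a genuinely different route from the paper's. The paper's proof is block-based: it groups $I$ into $n$-blocks, alternates the original block with the permuted one to form an $\emptyset$-indiscernible sequence $K$, then restricts to even positions to get an $A$-indiscernible sequence $K'$ of realizations of the $n$-fold power of $p$; it then invokes Fact~\ref{fac:tupnip} (the type of a tuple of realizations of an NIP type is NIP, a nontrivial consequence of sub-additivity of dp-rank from \cite{KOU}) to get a contradiction via Lemma~\ref{lem:pnip}. Your argument instead reduces to an adjacent transposition (which, as you note, places the swap pair in a single gap of the context, so $A$-order-indiscernibility propagates the asymmetry uniformly across the gap), expands the gap by compactness, and then uses $\emptyset$-automorphisms supplied by total $\emptyset$-indiscernibility to code arbitrary finite subsets of $\omega$ into the parameter side, producing IP for the $1$-type $p$ directly. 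This avoids Fact~\ref{fac:tupnip} entirely and is thus more self-contained, at the price of a somewhat longer setup. Two small points worth spelling out if you were to write this up: (i) the reduction to adjacent transpositions needs the observation that permuting both tuples of a type-equality preserves it, so that walking along a word $\sigma=\tau_1\cdots\tau_m$ in adjacent transpositions and finding where the sequence of types $q_{\tau_{j}\cdots\tau_m}$ first changes yields an adjacent transposition moving the type of an \emph{increasing} tuple; and (ii) one should note that if $I$ has repeated entries it is constant (by indiscernibility), so the map $a_\xi\mapsto a_{\rho_s(\xi)}$ is well-defined, and that the final step (getting IP for all, not just finite, $s\subseteq\omega$) is a routine compactness step.
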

\begin{proof}
    By compactness, we may assume that $I = (a_i)_{i<\omega}$ \textit{i.e.} has order-type $\omega$. 

    Some notation: for any $n$-tuple $c = (c_i)_{i<n}$ and for any permutation $\sigma$ of $n$, let $c^\sigma$ be the tuple $(c_{\sigma(i)})_{i<n}$. 
    
    Suppose that $I$ is not totally indiscernible over $A$. Then for some $n<\omega$, some permutation $\sigma$ of $n$, some tuple $c\in A$, and some formula $\theta(z,c)$ we have that $\models \theta(a_{<n},c) \land \neg \theta(a_{<n}^\sigma,c)$. By indiscernibility over $A$, we have that the same is true when we replace $a_{<n}$ by $(a_i)_{kn \leq i < (k+1)n}$ for any $k<\omega$. Thus we can produce an $\emptyset$-indiscernible sequence $K = (d_k)_{k<\omega}$ such that $\models \theta(d_k,c)^{(k\,\mbox{even})}$ for all $k<\omega$, namely $d_k = (a_i)_{kn \leq i < (k+1)n}$ for $k$ even and $d_k = ((a_i)_{kn \leq i < (k+1)n})^\sigma$ for $k$ odd (it is $\emptyset$-indiscernible because $I$ is totally indiscernible over $\emptyset$).

    Now consider $K' = (d'_k)_{k<\omega}$ where $d'_k = d_{2k}$. Then $K'$ is $A$-indiscernible and $K' \equiv_\emptyset K$. Thus for some $c'$, we have that $c'K' \equiv_\emptyset cK$. In particular, $\models \theta(d_k',c')^{(k\,\mbox{even})}$ for all $k<\omega$. 

    By Fact \ref{fac:tupnip}, $\tp(d_0'/A)$ is NIP. Together we have a contradiction. 
\end{proof}

\begin{cor} \label{cor:totally indiscernible over} 
    Suppose that $p$ is a global NIP type which does not fork over $A$, and that $I$ is an infinite Morley sequence generated by $p$ over $A$. Then if $I$ is totally indiscernible (over $\emptyset$) then it is totally indiscernible over $A$.
\end{cor}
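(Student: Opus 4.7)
The plan is to reduce to Proposition \ref{pro:totally indiscernible over}, but with an enlarged base. The direct application fails because $p \restriction A$ itself need not be NIP (this is precisely Question \ref{question}, which is known to fail in general). However, Corollary \ref{cor:main positive NIP result} tells us that $p \restriction AJ$ \emph{is} NIP as soon as $J$ is an infinite Morley sequence of $p$ over $A$. So the strategy is to prepend such a $J$ to $I$ and then apply the proposition with base $AJ$.

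First I would reduce to the case where $I$ has order type $\omega$: total indiscernibility over $A$ is a finitary property, so if every countable subsequence of $I$ is totally indiscernible over $A$, then so is $I$, and any countable subsequence of a Morley sequence totally indiscernible over $\emptyset$ inherits both properties. Then, to construct $J = (b_j)_{j<\omega}$ with $J + I$ a Morley sequence of $p$ over $A$, I would realize $p^{(\omega+\omega)} \restriction A$ in the monster as $(c_i)_{i<\omega+\omega}$ (this type is complete by Fact \ref{fac:sequences generated by invariant types}), observe that its tail $(c_i)_{\omega \leq i < \omega + \omega}$ realizes the same complete type $p^{(\omega)}\restriction A$ as $I$, and apply an $A$-automorphism moving the tail onto $I$, letting $J$ be the image of the head. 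Alternatively, one may invoke Lemma \ref{lem:expanding Morely sequence} with $I_1 = \emptyset$ and $I_2 = I$.

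Now $J$ is itself an infinite Morley sequence of $p$ over $A$, so by Corollary \ref{cor:main positive NIP result} the type $p \restriction AJ$ is NIP. By the Morley structure of $J + I$, each $a_i \in I$ satisfies $p \restriction A J a_{<i}$, and in particular $a_i \models p \restriction AJ$, so $I$ is a sequence of realizations of $p \restriction AJ$. Moreover, $A$-indiscernibility of $J + I$ yields $AJ$-indiscernibility of $I$, because for any two increasing tuples $a, a'$ from $I$ of equal length and any finite tuple from $J$, the types agree over $A$ by $A$-indiscernibility of $J+I$, and these agreements assemble into $a \equiv_{AJ} a'$.

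With these verifications in place, the hypotheses of Proposition \ref{pro:totally indiscernible over} hold with $A$ replaced by $AJ$ and $p$ replaced by $p \restriction AJ$: the base type is NIP, $I$ is an infinite $AJ$-indiscernible sequence of realizations, and by hypothesis $I$ is totally indiscernible over $\emptyset$. The proposition then gives that $I$ is totally indiscernible over $AJ$, hence \emph{a fortiori} over $A$. The only conceptual obstacle is noticing that one must enlarge the base so as to fit the NIP hypothesis of Proposition \ref{pro:totally indiscernible over}; the rest is bookkeeping.
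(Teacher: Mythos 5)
Your proof is correct and takes a genuinely different route from the paper's. You prepend a Morley sequence $J$ to $I$ so that $J+I$ is a Morley sequence generated by $p$ over $A$, then invoke Proposition \ref{pro:totally indiscernible over} directly on $I$ over the enlarged base $AJ$ (using Corollary \ref{cor:main positive NIP result} to see that $p\restriction AJ$ is NIP). The paper goes the other way: it appends a Morley sequence $J$ of order type $\omega$ generated by $p$ over $AI$, notes via Lemma \ref{lem:indlas2} that $I+J$ is $A$-indiscernible so $I$ and $J$ have the same EM-type over $A$, transfers total $\emptyset$-indiscernibility from $I$ to $J$, proves $J$ is totally indiscernible over $AI$, and transfers back to $I$ via the EM-type. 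Your route is slightly more direct since it avoids the double EM-transfer and works uniformly for $I$ of any infinite order type. Two small remarks. First, the reduction to order type $\omega$ is not quite right as stated (a countable subsequence need not have order type $\omega$ — think of $I$ of type $\omega^*$), but it is also unnecessary: the construction of $J$ via Lemma \ref{lem:expanding Morely sequence} works for arbitrary $I$. Second, your primary construction of $J$ by realizing $p^{(\omega+\omega)}\restriction A$ and applying an $A$-automorphism is a little delicate: since $p$ is only Lascar-invariant over $A$ (not $A$-invariant), the automorphism may move $p$, so the resulting $J+I$ is an $A$-indiscernible sequence with the correct type over $A$ but is not immediately seen to be \emph{generated by} $p$ over $A$; one still needs Lemma \ref{lem:expanding Morely sequence} (or Lemma \ref{lem:indlas2}) to close that gap, so that lemma is not really an ``alternative'' but the essential ingredient.
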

\begin{proof} 
    Suppose that $p$, $A$ and $I$ is as above and that $I$ is totally indiscernible over $\emptyset$. let $J = (a_i)_{i<\omega}$ be a Morley sequence generated by $p$ over $AI$. Then $I+J$ is $A$-indiscernible by Lemma \ref{lem:indlas2}, so that they have the same EM-type over $A$ and in particular $J$ is totally indiscernible over $\emptyset$. By the same reason it is enough to show that $J$ is totally indiscernible over $A$. But by Proposition \ref{pro:totally indiscernible over} and Corollary \ref{cor:main positive NIP result}, $J$ is totally indiscernible over $AI$, so we are done.
\end{proof}

\begin{cor}\label{cor:niptotind} Let $p \in S(\mathfrak{C})$ be an NIP global type which does not fork over $A$ and such that (some) every infinite Morley sequence $I$ generated by $p$ over $A$ is totally indiscernible, then $p\restriction A$ is NIP and moreover $\dpr(p) = \dpr(p\restriction A)$.\end{cor}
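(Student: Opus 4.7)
Begin by choosing an infinite Morley sequence $I$ generated by $p$ over $A$, which exists by Remark \ref{rem:exmor}. By the hypothesis together with Corollary \ref{cor:totally indiscernible over}, $I$ is totally indiscernible over $A$. Corollary \ref{cor:main positive NIP result} then gives that $p\restriction AI$ is NIP, and Theorem \ref{the:preserving dp-rank} gives $\dpr(p\restriction AI)=\dpr(p)$. Since $p\restriction A\subseteq p\restriction AI$, monotonicity yields $\dpr(p\restriction AI)\leq \dpr(p\restriction A)$, so the missing direction is
\[
\dpr(p\restriction A)\leq \dpr(p\restriction AI).
\]
This single inequality simultaneously yields the NIP preservation (since $\dpr(p\restriction AI)<\infty$) and the dp-rank equality, so the rest of the proof focuses on it.

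To establish this, suppose $\dpr(p\restriction A)\geq \kappa$ is witnessed by $A$-mutually indiscernible sequences $(K_\alpha)_{\alpha<\kappa}$ and $b\models p\restriction A$ with no $K_\alpha$ being $Ab$-indiscernible. Fix a sufficiently saturated model $M\supseteq A\cup\{b\}\cup\bigcup_\alpha K_\alpha$ and let $a\models p\restriction M$. Applying the ``moreover'' part of Lemma \ref{lem:lema} with $J=(K_\alpha)$ produces a Morley sequence $I'=(c_i)_{i<\omega}$ of $p$ over $A$ such that $a+I'$ is Morley over $A$ and the $(K_\alpha)$ remain mutually indiscernible over $AI'$. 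By hypothesis $a+I'$ is totally indiscernible over $A$; the symmetry this provides lets us view $a$ as an interior point of a Morley sequence, forcing $a\models p\restriction AI'$.

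The remaining task is to transfer the witness of non-indiscernibility from $b$ to a realization of $p\restriction AI'$. The idea is to apply a Lascar-strong automorphism over $A$ sending $b\mapsto a$, which transports $(K_\alpha)$ to some $(K'_\alpha)$ of the same Lascar-strong type over $A$, preserving the non-indiscernibility of each $K'_\alpha$ over $Aa$. Since this automorphism need not fix $I'$, one then uses the ``moreover'' part of Lemma \ref{lem:lascarseq} to re-extract $AI'$-mutually indiscernible sequences $(K''_\alpha)$ of the same Lascar-strong type over $A$, and a further Lascar-strong automorphism over $A$ to align the data. The upshot is a configuration of $AI'$-mutually indiscernible sequences, together with a realization of $p\restriction AI'$ (a Lascar-equivalent image of $a$) witnessing the failure of indiscernibility, which yields $\dpr(p\restriction AI)\geq\kappa$.

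The main technical obstacle is justifying the existence of a Lascar-strong automorphism sending $b$ to $a$: both realize $p\restriction A$, but in general $b\equiv_A a$ does not entail $b\equiv_A^{Ls} a$. This is precisely where the total indiscernibility hypothesis is crucial, playing the role of a generic-stability property of $p$ over $A$ that forces realizations of $p\restriction A$ to lie in a single Lascar-strong type over $A$. Coordinating this transfer with the extraction step so that the $AI'$-mutual indiscernibility is preserved alongside the realization of $p\restriction AI'$ is the delicate part of the argument.
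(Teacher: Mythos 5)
Your high-level strategy matches the paper's: get a contradiction (or the reverse dp-rank inequality) by producing a Morley sequence $I'$ with $a + I'$ Morley over $A$, using total indiscernibility via Proposition \ref{pro:totally indiscernible over} to place $a$ inside $p\restriction AI'$, and then invoking Corollary \ref{cor:main positive NIP result} / Theorem \ref{the:preserving dp-rank}. However, your execution reverses the order of operations in a way that creates a genuine gap. You fix $a \models p\restriction M$ and run Lemma \ref{lem:lema} with the \emph{original} sequences $(K_\alpha)$, and only afterwards try to transport the witness $b$ (for which $(K_\alpha)$ actually fails indiscernibility) onto $a$ by an automorphism $\sigma$ over $A$. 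This moves $(K_\alpha)$ to $(K'_\alpha) = \sigma(K_\alpha)$, which need not be $AI'$-mutually indiscernible, so you re-extract $(K''_\alpha) \equiv^{Ls}_A (K'_\alpha)$ and apply a second Lascar-strong automorphism $\tau$ over $A$ sending $K'$ to $K''$. The gap is at this last step: $\tau$ fixes $A$ but in general moves $I'$, so $\tau(a) \models p\restriction A\tau(I')$, \emph{not} $p\restriction AI'$. Lemma \ref{lem:lascarseq} only gives $(K''_\alpha) \equiv^{Ls}_A (K'_\alpha)$ together with $AI'$-mutual indiscernibility, and there is no $AI'$-Lascar-equivalence, so you cannot choose $\tau$ fixing $I'$.

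You also misidentify the ``main technical obstacle.'' You do not need a Lascar-strong automorphism $b\mapsto a$ at all: any $a, b\models p\restriction A$ are $\equiv_A$, and a plain $A$-automorphism suffices. The paper performs exactly this transport \emph{before} invoking Lemma \ref{lem:lema}: replace $(b,(K_\alpha))$ with $(a', (K'_\alpha))$ where $a'\models p\restriction M$ and $(K'_\alpha)a' \equiv_A (K_\alpha)b$; then apply Lemma \ref{lem:lema} (``moreover'' part) to $a'$ and $(K'_\alpha)$ to get $I'$ with $a' + I'$ Morley over $A$ and $(K'_\alpha)$ $AI'$-mutually indiscernible, so $a'\models p\restriction AI'$ by total indiscernibility, and the sequences still fail $Aa'$-indiscernibility. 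No re-extraction and no second automorphism are needed. Your argument can be repaired by reordering to match this, at which point the appeal to total indiscernibility forcing a unique Lascar-strong type over $A$ (which you do not prove and which is not needed) can be dropped.
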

\begin{proof} 
    Note that all infinite Morley sequences generated by $p$ over $A$ have the same EM-type over $A$ (by Lemma \ref{lem:indlas2}: given two such sequences, generate new one over their union), so if one is totally indiscernible so is any other.

    Assume towards a contradiction that $p\restriction A$ has IP, so there are $\varphi(x,y)\in\mathcal{L}$, $J=(b_i)_{i<\omega}$ indiscernible over $A$ and $a\models p\restriction A$ such that $\models\varphi(a,b_i)^{(i\,\mbox{even})}$ for all $i<\omega$.

    We may assume that $a\models p\restriction M$ for some model $M\supseteq A$. Otherwise replace $a$ by some $a' \models p \restriction M$, and replace $J$ be some $J'$ such that $J'a' \equiv_A Ja$.

    By Lemma \ref{lem:lema} we can find $I=(a_i)_{i<\omega}$ such that $a+I$ is a Morley sequence generated by $p$ over $A$ and $J$ is $AI$-indiscernible. By hypothesis $a+I$ is totally indiscernible so by Proposition \ref{pro:totally indiscernible over} it is totally indiscernible over $A$. But this yields $a\models p\restriction AI$ (let $\psi(x,a_{<i},c) \in p \restriction AI$ with $c \in A$, then for any $j>i$, $\models \psi(a_j,a_{<i},c)$, and by total indiscernibility $\models \psi(a,a_{<i},c)$).

    So we have $a\models p\restriction AI$, an $AI$-indiscernible sequence $J=(b_i)_{i<\omega}$ and $\varphi(x,y)$ such that for all $i<\omega$, $\models\varphi(a,b_i)^{(i\,\mbox{even})}$, which contradicts Theorem \ref{the:result1}.

    The ``moreover'' part (about the dp-rank) is proved exactly the same way, contradicting Theorem \ref{the:preserving dp-rank} using the ``moreover'' part of Lemma \ref{lem:lema}.
\end{proof}

By Fact \ref{fac:generically stable -> totally indiscernible} we get:
\label{cor:generically stable}\begin{cor}  Let $p$ be NIP and generically stable over $A$ then $p\restriction A$ is NIP and of the same dp-rank. \end{cor}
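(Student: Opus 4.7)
The plan is to invoke Corollary \ref{cor:niptotind} directly, using Fact \ref{fac:generically stable -> totally indiscernible} to verify its total-indiscernibility hypothesis. Since $p$ is generically stable over $A$, by definition $p$ is a global $A$-invariant type, so in particular $p$ does not split, hence does not fork, over $A$. Together with the NIP assumption, this already places us in the setting of Corollary \ref{cor:niptotind}.

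Next, Fact \ref{fac:generically stable -> totally indiscernible} asserts that every Morley sequence $(a_i)_{i<\omega}$ generated by $p$ over $A$ is totally indiscernible over $A$; a fortiori it is totally indiscernible over $\emptyset$. By compactness (or simply by extending and restricting via Lemma \ref{lem:indlas2}) the same holds for Morley sequences of any infinite order type. So some (equivalently, every) infinite Morley sequence generated by $p$ over $A$ is totally indiscernible, which is exactly the hypothesis needed.

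Therefore Corollary \ref{cor:niptotind} applies and yields both that $p \restriction A$ is NIP and that $\dpr(p) = \dpr(p \restriction A)$. There is no obstacle: the content has already been done in Corollary \ref{cor:niptotind} and in Theorem \ref{the:preserving dp-rank}, and Fact \ref{fac:generically stable -> totally indiscernible} is the clean bridge that turns generic stability into the total-indiscernibility hypothesis required there.
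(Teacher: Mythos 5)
Your proof is correct and takes essentially the same approach as the paper: the paper's entire proof is the single line "By Fact \ref{fac:generically stable -> totally indiscernible} we get:", which is precisely the bridge you spell out—generic stability gives total indiscernibility of Morley sequences, placing you in the hypotheses of Corollary \ref{cor:niptotind}. Your extra remark about arbitrary order types is not strictly needed (the corollary already records that "some" suffices, since all Morley sequences generated by $p$ over $A$ share the same EM-type over $A$), but it does no harm.
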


\subsection{Preservation of stability}
\label{sec:preservation of stability}
We recover the main result of \cite{Gen} about the preservation of stability. 

Given $p\in S(\mathfrak{C})$ as in Theorem \ref{the:result1} but stable, by repeating the same proof using OP and IP we can show that $p\restriction AI$ is stable and moreover that $I$ is totally indiscernible. Assume $p\restriction AI$ has OP as witnessed by the formula $\varphi(x,y)$. As $p$ is stable, we use Remark \ref{rem:pglobstable} to get $\sigma(x,c)\in p$ such that $\sigma(x,c) \land \varphi(x,y)$ is stable. Then we use the fact that $p$ is NIP and Remark \ref{rem:pglobnip} to get $\chi(x,d)\in p$ such that $\chi(x,d) \land \sigma(x,z)$ is NIP. Now we proceed as in the proof of Theorem \ref{the:result1} (same claim) towards the contradiction with the choice of $\chi$, this time using that $p\restriction AI$ has OP as witnessed by $\varphi$. This gives us that $p \restriction AI$ is stable. Then, if we take a Morley sequence $J$ generated by $p$ over $AI$, we get that $J$ is totally indiscernible by stability. As in the proof of Corollary \ref{cor:niptotind}, we get that $I$ (and any other Morley sequence generated by $p$ over $A$) is also totally indiscernible. Replacing IP by OP in that proof and taking a global non-forking extension, we finally get:
\begin{cor}\label{cor:stable case, first proof} Let $p \in S(B)$ be a stable type which does not fork over $A \subseteq B$ then $p\restriction A$ is stable.\end{cor}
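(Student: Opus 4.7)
The plan is to follow precisely the roadmap laid out in the paragraph immediately preceding the corollary statement, which adapts the whole three-step structure (Theorem \ref{the:result1} $\to$ Proposition \ref{pro:totally indiscernible over}/Corollary \ref{cor:totally indiscernible over} $\to$ Corollary \ref{cor:niptotind}) from the IP setting to the OP setting. First I would pass to a global non-forking extension $p \in S(\mathfrak{C})$ of the given type; since any extension of a stable type is stable (as noted right after the definition of stable types in the preliminaries), this global extension is still stable and still non-forking over $A$. From now on I would work with this global type.

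Next I would prove the stable analogue of Theorem \ref{the:result1}: if $I = (a_i)_{i<\omega}$ is a Morley sequence generated by $p$ over $A$, then $p\restriction AI$ is stable. Assuming for contradiction that $\varphi(x,y)$ witnesses OP for $p\restriction AI$ (so there are an $AI$-indiscernible $(e_i)_{i \in \mathbb{Z}}$ and $f \models p\restriction AI$ with $\models \varphi(f,e_i)^{(i \geq 0)}$ via Lemma \ref{lem:opform}), I would apply Remark \ref{rem:pglobstable} to $p$ and $\varphi$ to extract $\sigma(x,c) := \alpha_{\varphi}(x,c) \in p$ so that $\sigma(x,c) \land \varphi(x,y)$ is stable; and then, since $p$ is in particular NIP, apply Remark \ref{rem:pglobnip} to $\sigma$ to extract $\chi(x,d) := \alpha_{\sigma}(x,d) \in p$ with $\chi(x,d) \land \sigma(x,z)$ NIP. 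As in the proof of Theorem \ref{the:result1}, I can assume $\models \chi(a_i,d)$ for all $i<\omega$ by passing to a Lascar-strong conjugate via Lemma \ref{lem:indlas2}. The inductive claim now constructs $c_n \equiv^{Ls}_A c$ with $\models \sigma(a_i,c_n)^{(i<n)}$: the even step is unchanged, and in the odd step one uses Lemma \ref{lem:lascarseq} together with the fact that $\chi(x,d) \land \sigma(x,z)$ is NIP to force $\models \sigma(a_n,c_{n+1})$ from the OP witness. Compactness and indiscernibility of $I$ then contradict the stability of $\sigma(x,c_\omega) \land \varphi(x,y)$ (by Lemma \ref{lem:opform} and Remark \ref{rem:pglobstable}).

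Once $p\restriction AI$ is stable, a Morley sequence $J$ generated by $p$ over $AI$ is totally indiscernible over $AI$ (the standard fact that Morley sequences for stable types are totally indiscernible, using $acl^{eq}$-definability from Fact \ref{fac:stableacl} and Lemma \ref{lem:lascar}(6)). By Lemma \ref{lem:indlas2}, $I + J$ is $A$-indiscernible and hence $I$ and $J$ share the same EM-type over $A$, so $I$ itself is totally indiscernible. At this point I would rerun the proof of Corollary \ref{cor:niptotind} with OP in place of IP: assume towards contradiction that $p\restriction A$ has OP witnessed by $\varphi$, an $A$-indiscernible $(b_i)_{i \in \mathbb{Z}}$ and $a \models p \restriction A$ with $\models \varphi(a,b_i)^{(i\geq 0)}$; replace $a$ by a realization of $p\restriction M$ for a model $M \supseteq A$; apply Lemma \ref{lem:lema} to obtain $I = (a_i)_{i<\omega}$ with $a + I$ Morley over $A$ and $(b_i)_{i \in \mathbb{Z}}$ still $AI$-indiscernible; total indiscernibility of $a+I$ over $A$ then upgrades $a \models p\restriction A$ to $a \models p\restriction AI$, contradicting stability of $p\restriction AI$.

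The main obstacle will be the odd-step NIP-argument in the adapted Theorem \ref{the:result1}: in the IP proof, the NIP formula $\chi \land \sigma$ is contradicted by alternating behaviour of $\varphi(a_n, e'_i)$, but OP only provides a one-sided witness $\models \varphi(f,e_i)^{(i \geq 0)}$, so I must be careful to extract the right $AI$-indiscernible sequence (shifting indices via Lemma \ref{lem:opform}) so that the same NIP contradiction still goes through. Everything else is a mechanical IP-to-OP substitution, provided one remembers that Remarks \ref{rem:pglobstable} and \ref{rem:pglobnip} are both available because a stable type is automatically NIP.
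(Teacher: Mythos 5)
Your overall plan --- pass to a global non-forking extension, prove an OP-version of Theorem \ref{the:result1} showing $p\restriction AI$ is stable, obtain total indiscernibility of Morley sequences, and then rerun Corollary \ref{cor:niptotind} with OP in place of IP --- matches the paper's route, and your stages one, three and four are fine. The error is in stage two: you have swapped the roles of $\sigma\wedge\varphi$ and $\chi\wedge\sigma$, and the claim as you state it cannot produce a contradiction. The inductive claim must be \emph{the same} as in Theorem \ref{the:result1}: find $c_n\equiv^{Ls}_A c$ with $\models\sigma(a_i,c_n)^{(i\,\mathrm{even})}$ for $i<n$. Your ``$\models\sigma(a_i,c_n)^{(i<n)}$'' just says $\sigma(a_i,c_n)$ holds for all $i<n$, so the resulting $c_\omega$ carries no alternating pattern and contradicts nothing.

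Concretely, the odd step must force $\neg\sigma(a_n,c_{n+1})$, not $\sigma(a_n,c_{n+1})$ (the latter is the even step, obtained from Lascar-invariance of $p$), and the tool there is the \emph{stability} of $\sigma(x,c_{n+1})\wedge\varphi(x,y)$, not the NIP of $\chi\wedge\sigma$: after conjugating the OP witness to a $c_{n+1}$-indiscernible $J'=(e'_i)_{i\in\mathbb{Z}}$ with $\models\varphi(a_n,e'_i)^{(i\geq 0)}$, if $\sigma(a_n,c_{n+1})$ also held then $a_n$ would witness OP for $\sigma\wedge\varphi$ via Lemma \ref{lem:opform}. Dually, the final compactness contradiction is with the \emph{NIP} of $\chi(x,d)\wedge\sigma(x,z)$, not with the stability of $\sigma\wedge\varphi$: $c_\omega$ satisfying $\models\sigma(a_i,c_\omega)^{(i\,\mathrm{even})}$, together with $\models\chi(a_i,d)$ for all $i$ and $d$-indiscernibility of $I$, is an IP pattern against $\chi\wedge\sigma$. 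Your closing paragraph repeats the same inversion: it says that in Theorem \ref{the:result1} the alternation of $\varphi(a_n,e'_i)$ contradicts NIP of $\chi\wedge\sigma$, when in fact it contradicts NIP of $\sigma\wedge\varphi$ (it is the compactness step at the end that contradicts $\chi\wedge\sigma$). Once the two formulas' roles are put right --- stability of $\sigma\wedge\varphi$ in the odd step, NIP of $\chi\wedge\sigma$ in the final compactness step --- the rest of your outline goes through exactly as the paper intends.
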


We give a more direct proof. 

\begin{theo}\label{the:resultstab} Let $p$ be a global stable type which is $A$-invariant, then $p \restriction A$ is stable.\end{theo}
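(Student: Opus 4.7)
The plan is to use Remark~\ref{rem:pglobstable} in both directions. Since $p$ is stable, that remark yields, for every $\varphi(x,y)\in\mathcal{L}$, a formula $\alpha(x,z)\in\mathcal{L}$ and a tuple $c\in\mathfrak{C}$ with $\alpha(x,c)\in p$ and $\alpha(x,z)\land\varphi(x,y)$ stable. Conversely, to show that $p\restriction A$ is stable it suffices (by the same remark, applied to the complete type $p\restriction A$) to manufacture, for every $\varphi$, some $\beta(x)\in p\restriction A$ with parameters honestly in $A$, such that $\beta\land\varphi$ is stable.

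The key input is the $A$-definability of $p$, which follows from stability together with $A$-invariance: because the $\varphi$-definition of a stable type is unique up to equivalence, the canonical parameter of $d_p\alpha(z)$ in $\mathfrak{C}^{eq}$ is fixed by every element of $\mathrm{Aut}(\mathfrak{C}/A)$, hence lies in $dcl^{eq}(A)$, and consequently $d_p\alpha(z)$ can be written as $d'(z,\bar a)$ for some $\mathcal{L}$-formula $d'$ and some tuple $\bar a\in A$. I then set
\[
\beta(x,\bar a)\ :=\ \forall z\,\bigl(d'(z,\bar a)\to\alpha(x,z)\bigr),
\]
which is an $\mathcal{L}(A)$-formula. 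Every realization of $p$ satisfies $\beta(\cdot,\bar a)$: if $\models d'(z,\bar a)$ then $\alpha(x,z)\in p$ by the defining property of $d_p\alpha$, whence $\models\alpha(a,z)$ for every $a\models p$. So $\beta(x,\bar a)\in p\restriction A$.

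To see that $\beta\land\varphi$ is stable, I argue by contradiction: if it has OP, witnessed by sequences $(a_i)_{i<\omega}$ and $(b_j)_{j<\omega}$ with $\models(\beta(a_i,\bar a)\land\varphi(a_i,b_j))^{(i<j)}$, then in particular $\beta(a_i,\bar a)$ holds for every $i$. Since the original $c$ satisfies $d'(c,\bar a)$ (because $\alpha(x,c)\in p$ means $\models d_p\alpha(c)$), this forces $\models\alpha(a_i,c)$ for every $i$. Together with $\models\varphi(a_i,b_j)^{(i<j)}$, the sequence $(a_i)_{i<\omega}$ against the parameters $(c,b_j)_{j<\omega}$ witnesses OP for $\alpha(x,z)\land\varphi(x,y)$, contradicting the choice of $\alpha$. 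Then Remark~\ref{rem:pglobstable} applied to $p\restriction A$ concludes the proof.

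The main obstacle is the descent in the second paragraph, i.e.\ getting $d_p\alpha$ to live strictly over $A$ rather than over $dcl^{eq}(A)$. A clean way to bypass the imaginaries bookkeeping is to run the whole argument with $acl^{eq}(A)$ in place of $A$, where $d_p\alpha$ certainly has its parameters, to conclude that $p\restriction acl^{eq}(A)$ is stable, and then invoke Fact~\ref{fac:stableacl} to transfer stability back to $p\restriction A$.
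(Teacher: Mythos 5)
Your proposal is a different approach from the paper (which runs an inductive construction on Morley sequences and Lascar-automorphisms), but it contains a genuine gap that I believe cannot be repaired without essentially changing the argument.

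The gap is in the claim that $\beta(x,\bar a):=\forall z\bigl(d'(z,\bar a)\to\alpha(x,z)\bigr)$ belongs to $p$. Your justification says: if $\models d'(z,\bar a)$ then $\alpha(x,z)\in p$, so $\models\alpha(a,z)$ for any $a\models p$. But this only covers $z\in\mathfrak{C}$, whereas the universal quantifier in $\beta$ ranges over the model in which the realization $a$ lives, which is a proper elementary extension of $\mathfrak{C}$. In particular $z=a$ itself is a valid instance, and for such $z$ the phrase ``$\alpha(x,z)\in p$'' is not even meaningful. This is not a cosmetic issue; the claim can actually fail. Consider $T$ the theory of a single equivalence relation $E$ with infinitely many infinite classes, $p$ the generic global type ($\neg E(x,a)\wedge x\neq a$ for all $a\in\mathfrak{C}$), which is $\emptyset$-invariant and stable. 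Take $\alpha(x,z)=\neg E(x,z)$: then $\alpha(x,c)\in p$ for any $c$, $\alpha\wedge\varphi$ is stable for every $\varphi$ (the theory is stable), and the $\alpha$-definition is $d'(z)=\top$. So your $\beta(x)=\forall z\,\neg E(x,z)$, which is inconsistent because of reflexivity --- hence $\beta\notin p$. (Of course in this example one \emph{could} instead pick $\alpha_\varphi=\top$ and nothing goes wrong, but your argument nowhere restricts the choice of $\alpha_\varphi$ coming from Remark~\ref{rem:pglobstable}, and for a genuinely unstable ambient theory the trivial choice is unavailable.)

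The rest of your argument is sound given the gap: your OP-transfer step only uses $\beta(a_i,\bar a)\Rightarrow\alpha(a_i,c)$, which is correct since $c\in\mathfrak{C}$, and the $acl^{eq}$/$\mathrm{Fact}~\ref{fac:stableacl}$ bookkeeping at the end is fine. But without a valid way to manufacture a formula of $p\restriction A$ implying $\alpha(x,c)$, the proof does not close. The paper avoids this difficulty entirely: it never tries to descend the definition to $A$ syntactically. Instead it assumes $p\restriction A$ has OP, builds an indiscernible sequence of realizations of $p\restriction Ad$ using heirs and $A$-invariance, and uses a delicate induction (alternately moving parameters with Ramsey-extracted automorphisms and using stability of $\sigma(x,c)\wedge\varphi$) to contradict the stability of $\chi(x,d)\wedge\sigma(x,z)$; the ``universal quantifier over the ambient model'' never appears.
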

\begin{proof} Assume towards a contradiction that $p \restriction A$ is unstable. By Fact \ref{fac:pstable} there is a formula $\varphi(x,y)$, an $A$-indiscernible sequence $J = (a_i)_{i\in\mathbb{Z}}$ and $b\models p \restriction A$ such that $\models\varphi(b,a_i)^{(i\geq 0)}$. Since $p$ is stable, by Remark \ref{rem:pglobstable} applied to $\varphi(x,y)$, there is a formula $\sigma(x,z):=\alpha_{\varphi}(x,z)\in\mathcal{L}$ such that $\sigma(x,c)\in p$ for some tuple $c$ and $\sigma(x,c) \land \varphi(x,y)$ is stable. Again apply Remark \ref{rem:pglobstable} to $\sigma(x,z)$ to get $\chi(x,u):=\alpha_{\sigma}\in\mathcal{L}$ so that for some $d$, $\chi(x,d)\in p$ and $\chi(x,d) \land \sigma(x,z)$ is stable. 

Without loss of generality, we may assume that $b \models p \restriction Ad$. Indeed, let $b' \models p \restriction Ad$. As $b' \equiv _A b$, there is some automorphism $\tau$ sending $b'$ to $b$ fixing $A$. Let $d' = \tau(d)$, so that $b \models \tau(p \restriction Ad) = p \restriction Ad'$,  $\chi(x,d') \in p$ by invariance and still $\chi(x,d') \land \sigma(x,z)$ is stable. 

\begin{cla} For every $k<\omega$ there are $I_k=(a_{-k},\dots,a_0,\dots,a_k)$, and $c_k\equiv_A c$ such that: \begin{enumerate}
\item $I_k$ is a Morley sequence generated by $p$ over $Ad$. 
\item For all $-k \leq n \leq k$, $\models\sigma(a_n,c_k)$ iff $n \geq 0$.
\end{enumerate}  \end{cla}
\begin{clmproof}
The proof is by induction on $k$. For $k=0$ let $c_0 = c$ and $a_0 \models p \restriction Acd$. Assume we have $I_k$ and $c_k$ satisfying the properties above. We will find $c_{k+1}$, $a_{-(k+1)}$ and $a_{k+1}$ so that $c_{k+1}$ and $I_{k+1} = a_{-(k+1)} + I_k + a_{k+1}$ will work.

 Start with $a_{-(k+1)}$. Let $I'_k$ realize $p^{(2k+1)} \restriction AJbd$. Note that by $A$-invariance of $p$, $J$ is indiscernible over $AI'_k$. Since $I_k\equiv_{Ad}I'_k $, there are $b'$ and $J' = (a'_i)_{i\in\mathbb{Z}}$ such that $b'J'I_k\equiv_{Ad}bJI'_k $, so that $I_k\models p^{(2k+1)} \restriction AJ'b'd$. Therefore we have that $b'+I_k$ realizes $p^{(2(k+1))} \restriction Ad$.

By Ramsey and compactness, let $J''$ be indiscernible over $AI_kc_k$ realizing $EM(J'/AI_kc_k)$. Since $J'$ is indiscernible over $AI_k$, in particular $J'\equiv_{AI_k}J''$. Let $c_{k+1}$ be such that $J'c_{k+1}\equiv_{AI_k}J''c_k$ so that $J'$ is indiscernible over $AI_kc_{k+1}$. Note that $\models\neg\sigma(b',c_{k+1})$. This is because (1) $\sigma(x,c_{k+1}) \land \varphi(x,y)$ is stable, (2) $J'$ is $c_{k+1}$-indiscernible and (3)  $\models\varphi(b',a_i')^{(i\geq 0)}$. Thus, letting $a_{-(k+1)}=b'$, we conclude by induction, since $c_{k+1} \equiv_{AI_k} c_k$.



Finally, let $a_{k+1}\models p \restriction AI_ka_{-(k+1)}c_{k+1}d$. It is clear that $I_{k+1} = a_{-(k+1)} + I_k + a_{k+1}$ realizes $p^{(2(k+1)+1)} \restriction Ad$. We also have $\models\sigma(a_{k+1},c_{k+1})$ since $\sigma(x,c)\in p$ and $p$ is $A$-invariant. \end{clmproof}

 Now, by compactness there is an $Ad$-indiscernible sequence $I=(a_i)_{i\in\mathbb{Z}}$ of realizations of $p \restriction Ad$ and $c_{\omega}\equiv_A c$  such that $\models\sigma(a_n,c_\omega)^{(n\geq 0)}$ for all $n \in \mathbb{Z}$ which is a contradiction with the choice of $\chi(x,d)$ (since $\chi(a_n,d)$ holds for all $n \in \mathbb{Z}$). \end{proof}

\begin{cor}\label{cor:resultstab2} Let $p\in S(B)$ be a stable type which does not fork over $A\subseteq B$, then $p\restriction A$ is stable.\end{cor}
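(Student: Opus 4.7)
The plan is to reduce Corollary \ref{cor:resultstab2} to Theorem \ref{the:resultstab} by passing to a global non-forking extension and then dealing with the mismatch between ``$A$-invariant'' (what the theorem needs) and ``non-forking over $A$'' (what we have), bridging that gap via $acl^{eq}(A)$.

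First I would extend $p$ to a global type $\tilde{p} \in S(\mathfrak{C})$ which does not fork over $A$; such an extension exists by standard facts about non-forking. Since $p$ is stable and any extension of a stable type is stable (noted right after the definition of a stable type), $\tilde{p}$ is itself a global stable type. In particular $\tilde{p}$ is NIP, so by Lemma \ref{lem:lascar}(6) the fact that $\tilde{p}$ does not fork over $A$ is equivalent to $\tilde{p}$ being Lascar-invariant over $A$, and by the last clause of Lemma \ref{lem:lascar}(6) (the stable case) this is further equivalent to $\tilde{p}$ being $acl^{eq}(A)$-invariant.

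Now I would apply Theorem \ref{the:resultstab} with the parameter set $acl^{eq}(A)$ in place of $A$: $\tilde{p}$ is a global stable type which is $acl^{eq}(A)$-invariant, so $\tilde{p} \restriction acl^{eq}(A)$ is stable. Since $\tilde{p}$ extends $p \in S(B)$ and $A \subseteq B$, the restriction $\tilde{p} \restriction acl^{eq}(A)$ is an extension of $\tilde{p} \restriction A = p \restriction A$ to $acl^{eq}(A)$. By Fact \ref{fac:stableacl}, a type over $A$ is stable iff some extension to $acl^{eq}(A)$ is stable, so we conclude that $p \restriction A$ is stable.

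The only conceptual point requiring care is the gap between $A$-invariance (which Theorem \ref{the:resultstab} hypothesizes) and non-forking over $A$ (which is all we are given); this is precisely why we enlarge the base to $acl^{eq}(A)$ before invoking the theorem and then recover the statement over $A$ through Fact \ref{fac:stableacl}. No further delicate work is needed.
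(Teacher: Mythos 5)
Your proof is correct and is essentially identical to the paper's argument: both pass to a global non-forking extension, invoke Lemma \ref{lem:lascar}(6) to upgrade to $acl^{eq}(A)$-invariance, apply Theorem \ref{the:resultstab} over $acl^{eq}(A)$, and descend via Fact \ref{fac:stableacl}. The only difference is that you spell out in more detail the chain of equivalences in Lemma \ref{lem:lascar}(6), which the paper compresses into a single citation.
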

\begin{proof} Let $q$ be a global non-forking extension of $p$. By Lemma \ref{lem:lascar} (6), $q$ is $acl^{eq}(A)$-invariant. By Theorem \ref{the:resultstab}, $q\restriction acl^{eq}(A)$ is stable. Finally, by Fact \ref{fac:stableacl} it follows that $p\restriction A$ is stable. \end{proof}

\section{Examples} 
\label{sec:counterexample}

In this section we will introduce a general way of constructing a tree whose family of open cones of any center is a model of some theory. This will be used to produce two examples, one showing that the answer to Question \ref{question} may be negative (using the random graph as the base theory), and another showing that the dp-rank of a non-forking extension of a type may decrease, even in an NIP theory (using the theory of two independent equivalence relations as the base theory). 

We start with some preliminaries on Fra\"iss\'e theory and trees. 

\subsection{Fra\"iss\'e theory}

We assume the reader is familiar with Fra\"iss\'e constructions, QE (quantifier elimination), model companions, and model completions. However, we recall the following results as facts. See for instance sections 7.1 and 7.4 of \cite{Ho} and section 3.5 of \cite{C-K}.

Recall that given a class of structures $\mathcal{K}$, we say that $\mathcal{K}$ is the \emph{age} of a structure $M$ if $\mathcal{K}$ is the class of structures which are isomorphic to a finitely generated substructure of $M$. We say that $\mathcal{K}$ has HP, the \emph{Hereditary Property}, if for every $A\in\mathcal{K}$ and every substructure $B\subseteq A$, $B\in\mathcal{K}$. $\mathcal{K}$ has JEP, the \emph{Joint Embedding Property}, if for every pair $A, B\in\mathcal{K}$ there is $C\in\mathcal{K}$ such that both $A$ and $B$ embed in $C$. Finally, $\mathcal{K}$ has AP, the \emph{ Amalgamation Property} if for every $A, B, C\in\mathcal{K}$ with $A$ embedding into $B$ and $C$, there is an \emph{amalgam} $D\in\mathcal{K}$ such that $B$ and $C$ embed in $D$ over $A$. More precisely, given embeddings $f_1:A\to B$ and $f_2:A\to C$  there is a structure $D$ in the class $\mathcal{K}$ and embeddings $g_1:B\to D$ and $g_2: C\to D$ such that $g_1\circ f_1=g_2\circ f_2$. If $\mathcal{K}$ has HP, JEP and AP then $\mathcal{K}$ is a \emph{Fra\"iss\'e class}.

Recall also that an $\mathcal{L}$-structure $M$ \emph{ultrahomogeneous} if every isomorphism between two finitely generated substructures extends to an automorphism. 
\begin{rk} \label{rem:weakly homogeneous} By \cite[Lemma 7.1.4]{Ho}, this happens iff for every $A\subseteq M$ and every embedding $f:A \to B$, there is an embedding $g:B\to M$ such that $g \circ f$ is the identity. \end{rk}

\begin{fact} \phantomsection \label{fac:building} \begin{enumerate}

\item{(Fra\"iss\'e's Theorem)} Let $\mathcal{L}$ be a countable language and let $\mathcal{K}$ be an essentially countable\footnote{Having at most countably many structures up to isomorphism.} class of finitely generated $\mathcal{L}$-structures satisfying HP, JEP and AP. Then there is a countable, ultrahomogeneous $\mathcal{L}$-structure $M^*$ unique up to isomorphism such that $\mathcal{K}$ is the age of $M^*$.
 \cite[Thm. 7.1.2]{Ho}.
 $M^*$ is called \emph{the Fra\"iss\'e limit} of $\mathcal{K}$. 
\item Let $\mathcal{L}$ be a finite language and $\mathcal{K}$ an essentially countable uniformly locally finite set of finitely generated $\mathcal{L}$-structures satisfying HP, JEP and AP. Let $M^*$ the Fra\"iss\'e limit and $T=Th(M^*)$, then $T$ is $\omega$-categorical and has QE. \cite[Thm. 7.4.1]{Ho}
\item When $T$ is a countable universal $\mathcal{L}$-theory where $\mathcal{L}$ is finite,  $\mathcal{K}$ is the class of finitely generated models of $T$ and  $\mathcal{K}$ is uniformly locally finite, then the theory $Th(M^*)$ is the model completion of $T$ \cite[Fact 2.1, Remark 2.2]{KaplanSimon}.

 \end{enumerate}\end{fact}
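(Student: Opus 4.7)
The plan is to prove each of the three parts in sequence using classical Fra\"iss\'e-theoretic arguments; since these are standard results with references given, I will only sketch the main ideas and key constructions.

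For part (1), I would construct $M^*$ as the union of an increasing countable chain $M_0 \subseteq M_1 \subseteq \dots$ in $\mathcal{K}$, organised by a bookkeeping of tasks of two kinds: (a) ensuring every isomorphism type in $\mathcal{K}$ appears as a finitely generated substructure of $M^*$, handled via JEP; and (b) ensuring that every embedding $f\colon A \to B$ from a finitely generated $A \subseteq M_n$ into some $B \in \mathcal{K}$ is realised inside $M^*$, handled via AP applied to amalgamate $B$ with $M_n$ over $A$. Since $\mathcal{L}$ is countable and $\mathcal{K}$ is essentially countable, these tasks can be enumerated in one countable list and handled one per stage, all the while remaining in $\mathcal{K}$ thanks to HP. The resulting $M^*$ is then ultrahomogeneous by the criterion of Remark \ref{rem:weakly homogeneous}, and its age is $\mathcal{K}$ by construction together with HP. Uniqueness is a standard back-and-forth: given two such $M^*, N^*$, enumerate both and alternately extend a partial isomorphism using ultrahomogeneity on each side.

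For part (2), the additional hypotheses (finite language and uniform local finiteness) imply that for each $n$ there are only finitely many isomorphism types of $n$-generated $\mathcal{L}$-structures in $\mathcal{K}$. By ultrahomogeneity of $M^*$, two $n$-tuples lie in the same $\mathrm{Aut}(M^*)$-orbit exactly when the finitely generated substructures they generate are isomorphic via a map matching them up; in a finite language, this is determined by the quantifier-free type of the tuple. Hence there are only finitely many complete $n$-types over $\emptyset$ realised in $M^*$, and by Ryll--Nardzewski $T = \mathrm{Th}(M^*)$ is $\omega$-categorical. The same orbit analysis shows every $\emptyset$-definable set is a finite union of quantifier-free definable sets, yielding QE.

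For part (3), I would verify that $T^* := \mathrm{Th}(M^*)$ satisfies the defining properties of the model completion of $T$. Since $T$ is universal and $\mathcal{K}$ is precisely the class of finitely generated models of $T$, every finitely generated substructure of $M^*$ is a model of $T$, so $M^* \models T$ and hence $T \subseteq T^*$. Model completeness of $T^*$ is then immediate from the QE established in (2). To see that every model of $T$ embeds into some model of $T^*$, one embeds its finitely generated pieces into $M^*$ using the universal property and stitches them together via a chain argument inside a sufficiently saturated elementary extension of $M^*$. The step requiring most care, and the one I would check against \cite{KaplanSimon}, is amalgamation of arbitrary models of $T$ over common substructures (needed to conclude \emph{model completion}, not merely model companion); this follows from AP for $\mathcal{K}$ by passing to direct limits, using that every model of $T$ is a directed union of its finitely generated substructures.
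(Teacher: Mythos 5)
The paper does not prove Fact~\ref{fac:building}; it cites it directly to Hodges~\cite[Thms.~7.1.2, 7.4.1]{Ho} and to \cite[Fact~2.1, Remark~2.2]{KaplanSimon}. Your sketch is therefore not competing with a proof in the paper, but it is a faithful reconstruction of the standard arguments behind these citations, and all three parts are essentially correct.

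One small point on the logic in part (2): you move from \emph{finitely many complete $n$-types realised in $M^*$} to $\omega$-categoricity via Ryll--Nardzewski, but Ryll--Nardzewski counts the $n$-types of the \emph{theory}, and without further argument one cannot yet rule out types of $\mathrm{Th}(M^*)$ not realised in $M^*$. The clean order is the reverse of yours: first observe that, by uniform local finiteness and the finite language, for each $n$ there are only finitely many quantifier-free $n$-types, each isolated by a single quantifier-free formula $\varphi_i$; by ultrahomogeneity each $\varphi_i$ carves out a single $\mathrm{Aut}(M^*)$-orbit in $M^*$, so every formula $\psi(x)$ is decided on each $\varphi_i$ in $M^*$ and hence, in $\mathrm{Th}(M^*)$, is equivalent to a disjunction of some $\varphi_i$'s. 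This gives QE first; the finiteness of qf-types then bounds all complete $n$-types of the theory, and Ryll--Nardzewski applies. You do state the QE step, so all the ingredients are there, but the order should be flipped. Part (3) is fine: you correctly identify that the extra input needed to upgrade ``model companion'' to ``model completion'' is AP for arbitrary models of $T$, and that this follows from AP for the class $\mathcal{K}$ of finitely generated models by a direct-limit argument, using that $T$ universal makes every finitely generated substructure of a model of $T$ again a model.
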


The following remark can be very helpful in simplifying amalgamation arguments. 

\begin{rk}\label{rem:lemaap} Let $\mathcal{K}$ be a class of finitely generated structures. To show that $\mathcal{K}$ has AP, it is enough to check that given $A, B, C\in\mathcal{K}$ such that $A = B \cap C$ and $B=\langle Aa\rangle$ ($|a|=1$) there is an amalgam $D\in\mathcal{K}$. 

Similarly, it is enough to check this in case when both $B,C$ are generated over $A$ by one element. 
\end{rk}
\begin{proof} 
    We are given $A,B,C$ and $f_1,f_2$ as in the definition and we must find $D$ and $g_1,g_2$. Standard techniques allows us to assume that $f_1 = f_2$ is the identify map and that $A = B \cap C$. 

We will prove by induction on $n$ that if $A,B,C$ are as above and  $B=\langle Aa_0,\dots a_{n-1}\rangle$ then there is $D \in \mathcal{K}$ containing $B,C$. 

For $n=0$ there is nothing to prove. 
Suppose that the result is true for $n$, and suppose that $A,B,C$ are as above with  $B=\langle Aa_0,\dots a_{n}\rangle$. Let $B' = \langle Aa_0,\dots a_{n-1}\rangle$, and let $D' \in \mathcal{K}$ be an amalgam of $B',C$, as witnessed by embeddings $f_B' : B' \to D'$ and $f_C: C \to D'$ (such that $f_B' \restriction A = f_C \restriction A$). We may assume that $f_B' = id$ and by perhaps changing $D'$, we may assume that $D' \cap BC = B' \cap BC = B'$.
Let $A^* = B'$, $B^*=B$ and $C^* = D'$. Then we have that $A^* = B^* \cap C^*$ and $B^* = \langle A^*a_n\rangle$. By assumption, there is some amalgam $D \in \mathcal{K}$ and embeddings $g_{B^*}: B^* \to D$, $g_{C^*}: C^* \to D$ such that $g_{B^*} \restriction A^* = g_{C^*} \restriction A^*$. Finally, $g_{B^*}: B \to D$ and $g_{C^*}\circ f_C : C \to D$ are embeddings showing that $D$ is the amalgam of $B,C$ over $A$. 

The last assertion is proved similarly: since we only need to check the case where $C$ is generated by one element over $A$, we induct on the number of generators of $B$ (noting that if $C$ is generated by one element over $A$, letting $D''$ be the substructure of $D'$ generated by $f_C[C]B'$, $D''$ is generated by one element over $A^*$). 
\end{proof}

\begin{defi} \label{def:SAP} A Fra\"iss\'e class $\mathcal{K}$ has the Strong Amalgamation Property (SAP) if in the definition of AP we ask that $Im(g_1) \cap Im(g_2) = Im(g_2\circ f_2)$ (which is $= Im(g_1 \circ f_1)$). We also ask that in JEP, the images of $A$, $B$ are disjoint\footnote{The requirement about JEP was not stated in \cite{Ho}, but it seems to be just an omission, in light of \cite[Theorem 7.1.8]{Ho}.}. 
\end{defi}

SAP allows us to assume that in the context of Remark \ref{rem:lemaap}, the amalgam $D$ contains $B \cup C$. 

\subsection{Trees}\label{sec:trees}

Let $\mathcal{L}_{Tr}=\{\leq, \wedge\}$ and consider the theory $Tr$ of meet-trees, \textit{i.e.} $\leq$ is a partial order such that for every $x$ the set $\{y\leq x\}$ is linearly ordered and for any $x, y$, $\{z\leq x,y\}$ has a greatest element, namely, $x\wedge y$. Since in this paper we have no other type of trees, we will use the term \emph{trees} for meet-trees. 

Given a point $c$ in a tree, we say that $C(c)=\{b\geq c\}$ is the \emph{closed cone of center} $c$. Also consider the following relation defined in the closed cone $C(c)\setminus\{c\}$, $a\mathrel{E}_c b$\,\, if and only if $a\wedge b>c$. Note that $\mathrel{E}_c$ is an equivalence relation. An $E_c$-class is called an \emph{open cone of center} $c$.  
\begin{fact}\label{fac:dt} $Tr$ has a model completion, namely, the theory of dense trees, DT which is determined by saying that: $\leq$ defines a tree where $\wedge$ is the meet operator; for every $x$ the set $\{y\leq x\}$ is a  dense linear order with a maximal element but no minimal one; and for every $x$ there are infinitely many open cones of center $x$. DT is complete, $\omega$-categorical and has QE. It is the theory of the Fra\"iss\'e limit of the class of finite trees. Moreover it is NIP, in fact dp-minimal. (See Section 2.3.1 in \cite{Sim} and Proposition 4.7 in \cite{Sim11}) \end{fact}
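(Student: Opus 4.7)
The plan is to establish everything through Fra\"iss\'e theory applied to the class $\mathcal{K}$ of finite meet-trees in the language $\mathcal{L}_{Tr}$. First I would verify that $\mathcal{K}$ is a Fra\"iss\'e class. HP is immediate because an $\mathcal{L}_{Tr}$-substructure of a meet-tree is $\wedge$-closed and hence a meet-tree. JEP follows by adjoining to two finite trees a new common root strictly below everything. For AP, by Remark \ref{rem:lemaap} it suffices to amalgamate $B = \langle A, b \rangle$ and $C = \langle A, c \rangle$ over $A$; one sets $b \wedge c$ to be the largest element $d \in A$ such that $d \leq b \wedge a$ and $d \leq c \wedge a$ for all $a \in A$ — equivalently, the common initial segment of $b$ and $c$ within $A$ — and checks that the resulting structure $D = B \cup C$ (or $D = B \cup C \cup \{b \wedge c\}$ if this meet is not already in $A$) is a meet-tree. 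Moreover $\mathcal{K}$ is essentially countable, and uniformly locally finite since the $\wedge$-closure of an $n$-element subset of a meet-tree has size at most $2n-1$.

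Next I would apply Fact \ref{fac:building}: part (1) yields a countable ultrahomogeneous Fra\"iss\'e limit $M^*$; part (2) gives that $DT := \operatorname{Th}(M^*)$ is $\omega$-categorical and has QE, which immediately delivers completeness; and since $Tr$ is universal in $\mathcal{L}_{Tr}$ with $\mathcal{K}$ as its class of finitely generated models, part (3) gives that $DT$ is the model completion of $Tr$. To check that the stated axioms (density of $\{y \leq x\}$ with maximum but no minimum, and infinitely many open cones at each point) axiomatize $DT$, I would verify that $M^*$ satisfies them: given $a < b$ in $M^*$, amalgamating a two-element chain at $a < b$ with a new point strictly between them produces the required density, and analogously one amalgamates any finite number of new open cones at a given center by Remark \ref{rem:weakly homogeneous}. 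Conversely any countable model of the stated axioms is easily shown by a back-and-forth to be ultrahomogeneous with age $\mathcal{K}$, hence isomorphic to $M^*$, so the axioms pin down $DT$.

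For NIP and dp-minimality, I would use QE to reduce to analysing quantifier-free $1$-types. Over any parameter set $A$ (which we may assume $\wedge$-closed), the quantifier-free type of an element $b$ is encoded by the pair consisting of the branching point $c_b := \max\{a \wedge b : a \in A\}$ (which lies in $A$) together with either an $E_{c_b}$-class of $A$ containing $b$ or the indication that $b$ lies in a fresh open cone above $c_b$, plus the position of $b$ within the relevant linearly ordered chain above $c_b$. Given two $A$-mutually indiscernible infinite sequences $I_0, I_1$ and any $b$, one shows that at least one of the $I_j$ remains indiscernible over $Ab$: the potentially refining data coming from $b$ is essentially one-dimensional (a single ``cut'' in the tree relative to the parameters), and mutual indiscernibility of $I_0, I_1$ forces this refinement to be trivial on one of the two sequences. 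This gives $\operatorname{dpr}(DT) < 2$, and NIP follows.

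The main obstacle will be the dp-minimality argument: setting up the right bookkeeping of the ``branching-point'' invariants along two mutually indiscernible sequences so as to rigorously extract the dichotomy. Everything else — the Fra\"iss\'e verification, QE, model-completion status, completeness, $\omega$-categoricity — is routine once the amalgamation case analysis is pinned down, and indeed the result is ultimately cited from \cite{Sim} and \cite[Proposition~4.7]{Sim11}.
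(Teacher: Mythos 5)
Your overall strategy — verify the Fraïssé-class properties, invoke Fact~\ref{fac:building} for the limit, QE, $\omega$-categoricity and model-completion status, then attack NIP and dp-minimality via type counting and mutually indiscernible sequences — is the right blueprint, and since the paper treats the whole statement as a cited fact, any worked argument already goes beyond what the paper presents.

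However, your amalgamation step contains a genuine error. You define $b\wedge c$ as (the top of) the common initial segment of $b$ and $c$ inside $A$, which always produces an element of $A$; but the correct meet may be forced to lie outside $A$. Take $A=\{a_1<a_2\}$, $B=\{a_1,b_1,a_2,b\}$ with $a_1<b_1<a_2$, $b>b_1$, $b$ incomparable to $a_2$, and $C=\{a_1,c_1,a_2,c\}$ the symmetric structure over $A$. Your recipe yields $b\wedge c=a_1$. But in any amalgam $D$, both $b_1$ and $c_1$ lie in the chain $\{x\leq a_2\}$ and must therefore be comparable, say $b_1\leq c_1$; then $b\wedge c\geq b_1\wedge c_1=b_1>a_1$, a contradiction. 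The missing idea is the reduction the paper itself performs in Case~II of the proof of Proposition~\ref{pro:amalgamation of T*}: when the branching point $b_1:=\max\{b\wedge a:a\in A\}$ is not already in $A$, first amalgamate the intermediate tree $\langle Ab_1\rangle$ with $C$ (an instance of the easy case where the new point sits below some element of $A$), and only afterwards adjoin $b$ over a base that already contains $b_1$. Remark~\ref{rem:lemaap} does not exempt you from this: $\langle Ab\rangle$ can have two new points by Remark~\ref{rem:treef}, so "generated over $A$ by one element" is not the same as "$A$ plus one point." Finally, the dp-minimality sketch is, as you acknowledge, a heuristic rather than a proof; the genuine case analysis of indiscernible sequences in a dense tree (monotone chain, fan, comb) is what \cite[Proposition~4.7]{Sim11} supplies, and it is fine to defer to it, but you should present that as an explicit appeal rather than suggest the "one-dimensional cut" argument is self-contained.
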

\begin{rk} \phantomsection \label{rem:treef} \begin{enumerate}
\item If $A$ is a finite tree and $b$ is a new point, 
 $B=\langle Ab \rangle$ has at most $|A|+2$ points, which are those in $A$, $b$ and possibly a new point $c=\max\Set{b\wedge a}{a\in A}$. 
\item The cardinality of a tree generated by $n$ points is bounded by $2n$.\end{enumerate}\end{rk}  
\begin{proof} (2) follows by induction on (1). 

For (1) note the following two observations: the first is that $\land$ is associative and commutative, and the second is that if $a \land b > a \land c$ then $a \land c = b\land c$.  We leave the details as an exercise to the reader.  \end{proof}

\subsection{A general construction} \label{sec:a general construction}

Suppose that $\Ll$ is a finite relational language, and let $T_{\forall}$ be a universal $\Ll$-theory such that the class of finite models of $T_{\forall}$ is a Fr\"iss\'e class of with SAP. We will construct a new theory whose models are  trees with a generic model of $T_{\forall}$ on the open cones. 

For every $n$-place relation symbol $R$ in $\Ll$, let $R^*$ be an $n+1$-relation symbol, and let $\Ll^*$ be the language $\Ll_{Tr} \cup \Set{R^*}{R \in \Ll}$. Let $T_{\forall}^*$ be the following universal theory. It consists of the axioms $Tr$ for trees, and, for every $n$-place relation symbol $R \in \Ll$, the axioms 
\begin{itemize}
    \item[(\texttt{WD})] $\forall x,y_0,\dots,y_{n-1},y'_0,\dots, y'_{n-1} (R^*(x,y_0, \dots,y_{n-1}) \mathop{\&} \bigwedge_{i<n} x<y_i\wedge y_i') \longrightarrow R^*(x,y'_0, \dots,y'_{n-1})$.
    \item[(\texttt{OC})] $\forall x,y_0,\dots,y_{n-1} R^*(x,y_0, \dots,y_{n-1}) \longrightarrow \bigwedge_{i<n} x < y_i$.
\end{itemize}
(\texttt{WD} stands for Well-Defined and \texttt{OC} for Open Cone.)

We want that every model of $T_{\forall}^*$ will carry a model of $T_{\forall}$ on the open cones of $x$, this is ensured by first adding an axiom describing equality:
\begin{itemize}
    \item[(\texttt{EQ})] $\forall x,y,y' (\mathord{=}^*(x,y,y') \longleftrightarrow x < y \wedge y')$.
\end{itemize} 

Finally, for every quantifier-free formula $\psi(y)$ in $\Ll$ ($y$ a tuple of variables), let $\psi^*(x,y)$ be like $\psi$ but with every instance of a relation symbol $R(y)$ replaced by $R^*(x,y)$ (of course, only the appropriate parts of $y$ appear, in the order coming from $\psi$). As $T_{\forall}$ is universal, every axiom in $T_{\forall}$ has the form $\forall y \psi(y)$. For every such axiom, add to $T_{\forall}^*$ the axiom:
\begin{itemize}
    \item[(\texttt{T})$_\psi$] $\forall x,y (x <y \longrightarrow \psi^*(x,y))$.
\end{itemize} 
(Here $x<y$ means that  $x$ is smaller then all variable in $y$.)

\begin{lem} \label{lem:interpretation} Suppose that $M$ is a model of all axioms of $T_{\forall}^*$ except the axiom scheme (\texttt{T}) and $c\in M$. Then:
    \begin{enumerate}
    \item On the set of open cones of center $c$, if nonempty, we can define an $\Ll$-structure $M_c = \Set{a/E_c}{a\in M,a>c}$ by setting $R^{M_c}(a_0/E_c,\dots,a_{n-1}/E_c)$ iff $R^C(c,a_0,\dots,a_{n-1})$ for any $n$-place symbol $R \in \Ll$ and any $c< a_0,\dots,a_{n-1} \in M$. This gives an interpretation of $M_c$ in $M$ (with parameters).
    \item For any quantifier-free $\Ll$-formula $\psi(y)$ and any tuple $f \in M$ such that $c<f$ (\textit{i.e.} $c$ is smaller than any element from $f$), $M_c \models \psi(f/E_c)$ iff $M\models \psi^*(c,f)$.
    \item For all $c \in M$, $M_c \models T_{\forall}$ iff $M \models \forall y (c<y \to \psi^*(c,y))$ for any axiom $\forall y \psi \in T_{\forall}$.
    \item $M \models T_{\forall}^*$ iff $M_c \models T_{\forall}$ for all $c \in M$.
    \end{enumerate}
\end{lem}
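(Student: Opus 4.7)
The proof of this lemma is essentially a bookkeeping exercise, unpacking how the axioms (\texttt{WD}), (\texttt{OC}), (\texttt{EQ}), and (\texttt{T})$_\psi$ encode, on each open cone of a fixed center, a model of $T_\forall$. I would prove the four parts in order, with (2) doing the technical work and (3), (4) following by simple unraveling.

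For (1), the plan is to check that the proposed definition $R^{M_c}(a_0/E_c,\dots,a_{n-1}/E_c) :\Leftrightarrow R^{\ast,M}(c,a_0,\dots,a_{n-1})$ does not depend on representatives. Suppose $a_i E_c a_i'$ for each $i<n$; then $c < a_i \wedge a_i'$ in $M$, so (\texttt{WD}) applied twice (once in each direction) gives $R^{\ast,M}(c,a_0,\dots,a_{n-1}) \Leftrightarrow R^{\ast,M}(c,a'_0,\dots,a'_{n-1})$. The universe of $M_c$ is the quotient of the definable set $\{a : c < a\}$ by the $c$-definable equivalence relation $E_c$, and each $R^{M_c}$ is defined by the $\Ll^\ast$-formula $R^\ast(c,y_0,\dots,y_{n-1})$, so this is a $c$-parameter interpretation.

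For (2), I would induct on the structure of the quantifier-free $\Ll$-formula $\psi(y)$. The atomic case for a relation symbol $R$ is immediate from the definition of $R^{M_c}$ in (1). The atomic equality case $y_i = y_j$ is where (\texttt{EQ}) enters: either $f_i=f_j$ and both sides trivially hold, or $f_i \neq f_j$, in which case $M_c \models f_i/E_c = f_j/E_c$ iff $f_i \wedge f_j > c$ iff $M \models =^\ast(c,f_i,f_j)$ by (\texttt{EQ}). Boolean combinations lift immediately since $\psi \mapsto \psi^\ast$ commutes with $\wedge,\vee,\neg$ and since $c < f$ continues to hold for every subtuple of $f$.

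Part (3) is now a direct application of (2): if $\forall y\,\psi(y) \in T_\forall$, then $M_c \models \forall y\,\psi(y)$ iff $M_c \models \psi(g)$ for every tuple $g$ from $M_c$, iff $M_c \models \psi(f/E_c)$ for every tuple $f$ in $M$ with $c<f$, iff (by (2)) $M \models \psi^\ast(c,f)$ for every such $f$, iff $M \models \forall y\,(c<y \to \psi^\ast(c,y))$ (with the degenerate case of empty $M_c$ handled trivially on both sides). Finally, for (4), the hypothesis of the lemma says $M$ already satisfies every axiom of $T_\forall^\ast$ except possibly the scheme (\texttt{T}), so $M \models T_\forall^\ast$ is equivalent to $M$ satisfying every instance (\texttt{T})$_\psi$, which by (3) is equivalent to $M_c \models T_\forall$ for every $c \in M$. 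No step presents a real obstacle; the only small points of care are verifying that (\texttt{WD}) gives well-definedness in both directions and that (\texttt{EQ}) correctly encodes equality in the atomic step of (2).
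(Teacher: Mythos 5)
Your proposal is correct and takes essentially the same route as the paper, which dismisses the lemma with "(1) should be clear. (4) follows from (3) which follow from (2), and (2) is proved by induction." You simply flesh out the induction (using (\texttt{WD}) for well-definedness and (\texttt{EQ}) for the equality atom) exactly as intended.
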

\begin{proof}
    (1)  should be clear. (4) follows from (3) which follow from (2), and (2) is proved by induction. 
\end{proof}

\begin{rk} \label{rem:natural embedding} Note that if $M \subseteq N \models T_{\forall}^*$, then the map $a/E_c \mapsto a/E_c$ from $M_c$ to $N_c$ is an embedding of $\Ll$-structures.
\end{rk}

Let $\Kk$ be the class of finite models of $T_{\forall}^*$.

\begin{prop}\label{pro:amalgamation of T*} $\Kk$ is a uniformly locally finite Fra\"iss\'e class.
\end{prop}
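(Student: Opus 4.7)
The plan is to verify uniform local finiteness, HP, JEP, and AP. Uniform local finiteness is immediate from Remark \ref{rem:treef}(2): since $\wedge$ is the only function symbol of $\Ll^*$, an $\Ll^*$-structure generated by $n$ elements has underlying tree generated by those $n$ elements, hence has size at most $2n$. HP is immediate because $T_{\forall}^*$ is universal, so any $\Ll^*$-substructure of a finite model of $T_{\forall}^*$ is itself a model.

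For AP I plan to apply Remark \ref{rem:lemaap} to reduce to the case $A = B \cap C$ with $B = \langle Aa \rangle$ and $C = \langle Ac \rangle$ single-element extensions, and then build the amalgam $D$ in two stages. First, amalgamate the underlying tree structures of $B$ and $C$ over $A$: the class of finite meet-trees has SAP (standard; one defines $b \wedge c'$ for $b \in B \setminus A, c' \in C \setminus A$ as the meet in $A$ of the maximal elements of $A$ below $b$ and below $c'$, with a new root added in degenerate cases such as $A = \emptyset$). This yields a finite tree $D_0$ containing $B$ and $C$ disjointly over $A$, with possibly a few additional tree points. Second, extend the $R^*$ relations to $D_0$: by Lemma \ref{lem:interpretation}(4) it suffices, for each $d \in D_0$, to equip the set of open cones $(D_0)_d$ with an $\Ll$-structure making it a model of $T_{\forall}$ and restricting correctly to $B_d$ (when $d \in B$) and $C_d$ (when $d \in C$).

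The crux is that for $d \in A$, the open cones of $d$ in the tree-amalgam $D_0$ biject naturally with the strong-amalgam set $B_d \cup_{A_d} C_d$, so that the SAP hypothesis on $T_{\forall}$ provides a model of $T_{\forall}$ on $(D_0)_d$ extending both $B_d$ and $C_d$. For $d \in B \setminus A$ (resp.\ $C \setminus A$), the set $(D_0)_d$ contains $B_d$ (resp.\ $C_d$) together with at most a few extra cones coming from new tree points; an $\Ll$-structure making $(D_0)_d$ a model of $T_{\forall}$ is filled in using JEP of $T_{\forall}$, and the same idea handles entirely new $d \in D_0 \setminus (A \cup B \cup C)$. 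Lemma \ref{lem:interpretation}(4) then certifies $D_0 \in \Kk$, giving the desired amalgam. JEP follows from AP over the empty structure, since the empty tree is vacuously in $\Kk$. The main obstacle I anticipate is verifying the natural bijection in the crux, which requires inspecting the strong tree amalgam carefully; beyond that point, the coordination between the two stages is essentially forced by SAP for both trees and $T_\forall$, and Lemma \ref{lem:interpretation} does the bookkeeping.
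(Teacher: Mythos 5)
Your decomposition of the proof differs from the paper's in a way worth noting. You reduce both $B$ and $C$ to one-element extensions of $A$, form the tree amalgam $D_0$ first, and then equip each cone set $(D_0)_d$ with an $\Ll$-structure via SAP/JEP for $T_\forall$, letting Lemma~\ref{lem:interpretation}(4) certify membership in $\Kk$. The paper instead reduces only $B$ to a one-element extension, keeps $C$ arbitrary, and constructs $D = Cb$ directly, splitting into a case where the tree order forces everything (Case I: $b\leq a$ for some $a\in A$) and a case where $T_\forall$-SAP is invoked only at the single branch point $b_1$ (Case II). Your framing is arguably more conceptual; both ultimately use the same ingredients (tree amalgam, $T_\forall$-SAP at the branching node, Lemma~\ref{lem:interpretation} for bookkeeping).

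However, the ``crux'' you flag is a genuine unverified gap, and there is a subtlety you may underestimate. The bijection $(D_0)_d \cong B_d \sqcup_{A_d} C_d$ for $d\in A$ does \emph{not} follow merely from the tree amalgam being strong; it requires that for $b\in B\setminus A$, $c'\in C\setminus A$ with $b,c'>d$ and $b\wedge_{D_0} c'>d$, there is some $a\in A$ with $a>d$ and $a \mathrel{E_d} b$, $a \mathrel{E_d} c'$. A generic tree SAP amalgam could fail this (e.g.\ if it were to introduce a new meet point $e=b\wedge c'>d$, merging the cones of $b$ and $c'$ without an $A$-witness). Your suggested formula $b\wedge c' = (\max A_{\leq b})\wedge(\max A_{\leq c'})$ does guarantee the property, but it is incomplete in the degenerate case where $b$ or $c'$ lies below all of $A$ (then $\max A_{\leq b}$ does not exist and the two chains below $\min A$ must be merged by hand), and the resulting chain-merge must still be checked to preserve the cone bijection. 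Relatedly, for $d\in B\setminus A$ your claim that $(D_0)_d$ is $B_d$ plus ``a few extra cones handled by JEP'' is not quite right as a description of the obligation: you must show $(D_0)_d$ is \emph{exactly} $B_d$, since otherwise $B\hookrightarrow D_0$ would not be an $\Ll^*$-embedding. (It is in fact exactly $B_d$: elements of $C$ above $d$ occur only when $d<\min A$, and such elements are $E_d$-equivalent to $\min A$, hence create no new cones.) Once these verifications are supplied — and the observation that, since $\Ll$ is relational and $T_\forall$ universal, the $T_\forall$-SAP amalgam may be taken to live on exactly the set $B_d\cup_{A_d} C_d$ — your argument closes. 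As written, though, the load-bearing step is asserted rather than proved.
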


\begin{proof}
    Remark \ref{rem:treef} gives uniformly locally finiteness as $T_{\forall}^*$ has no new function symbols not already in $Tr$. HP is trivial since $T_{\forall}^*$ is a universal.

    In the following arguments, the reader is encouraged to draw pictures. 
    
    For JEP, given disjoint $A, B\in \Kk$, construct $C$ by adding a new point $c$ which is below the roots of $A$ and $B$ with the natural induced tree structure. Let $E \models T_{\forall}$ of size 2, and write $E = \{a^*,b^*\}$. Suppose that $R$ is an $n$-place relation symbol from $\Ll$. For any $a \in A$, ${R^*}^C(a,-) = {R^*}^A(a,-)$ and similarly for $b\in B$, ${R^*}^C(b,-) = {R^*}^B(b,-)$. For any $n$-tuple $f \in AB$, let $f^*$ be the $n$-tuple from $E$ constructed from $f$ by replacing every element from $A$ by $a^*$ and every element from $B$ by $b^*$. Define ${R^*}^C(c,f)$ iff $R^E(f^*)$. Note that the same property holds when $R$ is $\mathord{=}$ (where we do not have a choice in the definition by (\texttt{EQ})). By Lemma \ref{lem:interpretation}, since $C_c =E \models T_\forall$, it follows that $C \models T_{\forall}^*$.

    Next we prove AP. By Remark \ref{rem:lemaap} it is enough to consider the case $A,B,C\in \Kk$ with $A = B \cap C$ such that $B=\langle Ab\rangle$. We may assume that $b \notin A$. We will define the amalgam $D$. 

    \underline{Case I}: for some $a\in A$, $b < a$. 

    Let the universe of the $D$ be $Cb$. Let $a_0 = \max\Set{a' \in A}{a'<b}$ (might be $-\infty$) and $a_1 = \min\Set{a' \in A}{a'>b}$. Define the order on $D$ by putting $b$ in the bottom of the interval $(a_0,a_1)$ (so for any $c \in C$ in that interval, $b<c$). (The choice of where to put $b$ in that interval is not important.) Now there is a unique way to define the meet operation, and this defines a meet-tree structure on $D$. 

    Suppose $R \in \Ll$ is an $n$-place relation symbol. The axioms imply that there a unique way to define ${R^*}^D$. Start with defining ${R^*}^D(b,-)$, so let $f \in C$ be an $n$-tuple such that $b < f$ (\textit{i.e.} $b$ is smaller than any element in $f$). It cannot be that $c \wedge a = b$ for any $c \in f$ (as otherwise $b \in C$), so it must be that $c \wedge a > b$, in other words $c \mathrel{E}_b a$. Define ${R^*}^D(b,f)$ iff ${R^*}^B(b,a^n)$ (which we must by (\texttt{WD})). (Here, $a^n$ is the $n$-tuple $(a,\dots,a)$).)

    Now we define ${R^*}(c,-)$ for any $c \in C$. Let $f \in D$ be any $n$-tuple such that $c<f$. Let $f'$ be $f$ replacing each occurrence of $b$ by $a$ (if none exists then $f'=f$), and define ${R^*}^D(c,f)$ iff ${R^*}^C(c,f')$. Again, we have no choice in the matter since if $c<b$, then $b \wedge a = b > c$, so $b \mathrel{E}_c a$.

    Note that in the case that $R$ is $\mathord{=}$ these definitions coincide with the one given by (\texttt{EQ}).

    It is easy to see that under this definition, $B,C \subseteq D$. Now prove by induction that for any quantifier-free formula $\psi$ in $\Ll$, and any $b <f \in C$, $D \models \psi^*(b,f)$ iff $B \models \psi^*(b,a^n)$ and that for any $c \in C$ and $c<f$, $D\models \psi^*(c,f)$ iff $C \models \psi^*(c,f')$, so that $D \models T_{\forall}^*$. 

    \underline{Case II}: not Case I.

    By Remark \ref{rem:treef}, we know that $B = A\cup\{b,b_1\}$ where $b_1=\max\Set{b\wedge a}{a\in A}$. It follows that $b_1 < b$. Let $B' = Ab_1$ (with the induced structure from $B$). Note that $B' \models T_{\forall}^*$, $B' \cap C = A$ and that $b_1 \leq a$ for some $a \in A$. So either $b_1 \in A$ or we are in Case I. Either way, we can put a structure on $C' = Cb_1$ so that it is a model of $T_{\forall}^*$ and $B',C \subseteq C'$. Letting $\bar{A} = B'$, $\bar{B} = B$ and $\bar{C} = C'$, we have that $\bar{A} = \bar{B} \cap \bar{C}$, $\bar{B} = \bar{A} b$ and $b_1'=\max\Set{b\wedge a}{a\in \bar{A}}= b_1$. It is enough to amalgamate $\bar{B}$ and $\bar{C}$ over $\bar{A}$, so this allows us to assume that $b_1 \in A$. 

    Let $D = Cb$, and put a tree structure on $D$ so that $b>b_1$, there is no element from $C$ in between, and $b$ starts a new branch from $b_1$, \textit{i.e.} for every $c \in C$, if $c \not < b_1$, $c \wedge b = c \wedge b_1$. 
    
    Let $R$ be an $n$-place relation symbol from $\Ll$, and we want to define ${R^*}^D$. The only freedom we have is in defining ${R^*}^D(b_1,-)$. Indeed, for any $c \in C$, $c \neq b_1$, and any $n$-tuple $c<f$ from $D$, letting $f'$ be like $f$ but replacing every occurrence of $b$ by $b_1$, by (\texttt{WD}) we must define ${R^*}^D(c,f)$ iff ${R^*}^C(c,f')$ (because $b_1 = b \wedge b_1 > c$ if $b > c$). This definition works also in case when $R$ is $\mathord{=}$ where we do not have a choice by (\texttt{EQ}). Note that for any quantifier-free formula $\psi(y) \in \Ll$, and any tuple $f > c$, $D \models \psi^*(c,f)$ iff $C \models \psi^*(c,f')$.

    Also, ${R^*}^D(b,-)$ must be empty.  

    Next we define ${R^*}^D(b_1,-)$. Let $A^*$ be the $\Ll$-structure $A_{b_1}$ (see Lemma \ref{lem:interpretation}). Let $B^* = B_{b_1}$ (adding one more open cone to $A_{b_1}$, by choice of $b_1$) and $C^* = C_{b_1}$. By Remark \ref{rem:natural embedding}, $A^*$ naturally embeds into $B^*,C^*$ which embed in $D^* = D_{b_1}$ (so far, just as sets). Replace $A^*$, $B^*$ and $C^*$ with their images (with the induced structures), so that we have $A^* = B^* \cap C^*$, $A^* \subseteq B^*,C^*$ as $\Ll$-structures and $D^* = A^* \cup B^*$. As we assumed SAP for the class of finite $T_\forall$-models, and as $A^*,B^*, C^*$ are models of $T_\forall$, we can put an $\Ll$-structure on $D^*$ in such a way that $B^*,C^* \subseteq D^* \models T_\forall$ (note that $A^*$ might be empty, in which case we use JEP as in Definition \ref{def:SAP}). Let $f \in D$ be an $n$-tuple such that $b_1 <f$, and let $f^* \in D^*$ be like $f$ but with every element $c \in C$ replaced by $c/E_{b_1} \in D^*$ and every occurrence of $b$ replaced by $b/E_{b_1} \in D^*$. Define ${R^*}^D(b_1,f)$ iff $R^{D^*}(f^*)$. 


    Note that by (\texttt{EQ}), we do not have a choice in defining $\mathord{=}^*$, but as $D^* = D_{b_1}$ (which we got by the SAP assumption), it follows that using the definition above or the one given by (\texttt{EQ}) amounts to the same thing. 

    Note also that by Lemma \ref{lem:interpretation}, any formula $\psi(y) \in \Ll$, and any tuple $f > b_1$, $D \models \psi^*(b_1,f)$ for any axiom $\forall y \psi(y) \in T_{\forall}$ (since $D^* \models T_\forall$).
    
    It is now easy to see that $B,C \subseteq D$ and that $D\models T_{\forall}^*$ as required. 
\end{proof}

By the last lemma, $\Kk$ satisfies the hypothesis of Fra\"iss\'e's theorem (Fact \ref{fac:building}). 

\begin{defi} Let $T^*$ be the theory of the limit.  \end{defi}

Note that by Fact \ref{fac:building}, we have:
\begin{cor} $T^*$ has QE, it is $\omega$-categorical, and is precisely the model completion of $T_{\forall}^*$.\end{cor}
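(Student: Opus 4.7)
The plan is to observe that this corollary is essentially a direct consequence of Proposition \ref{pro:amalgamation of T*} together with the classical results recorded in Fact \ref{fac:building}. All the real work has already been done: we have established that $\Kk$, the class of finite models of $T_{\forall}^*$, is a uniformly locally finite Fra\"iss\'e class in the finite relational language $\Ll^*$.

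First I would note that $\Ll^*$ is finite: the language $\Ll_{Tr} = \{\leq, \wedge\}$ is finite and by hypothesis $\Ll$ is finite, so adding one relation symbol $R^*$ per $R \in \Ll$ keeps $\Ll^*$ finite. Next, because $T_{\forall}^*$ is universal and $\Ll^*$ has one function symbol ($\wedge$), the finite models of $T_{\forall}^*$ coincide with the finitely generated ones, and by Remark \ref{rem:treef} (applied to the meet-tree reduct, which controls the cardinality of generated substructures) the class is uniformly locally finite. Uniform local finiteness together with finiteness of $\Ll^*$ guarantees that $\Kk$ is essentially countable. Thus the hypotheses of Fact \ref{fac:building}(1) are satisfied, yielding a countable ultrahomogeneous Fra\"iss\'e limit $M^*$ whose age is $\Kk$; by construction $T^* = \mathrm{Th}(M^*)$.

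Then I would invoke Fact \ref{fac:building}(2) directly to conclude that $T^*$ is $\omega$-categorical and has quantifier elimination. Finally, to identify $T^*$ as the model completion of $T_{\forall}^*$, I would apply Fact \ref{fac:building}(3): $T_{\forall}^*$ is a universal $\Ll^*$-theory with $\Ll^*$ finite, and we have just observed that $\Kk$ is the class of finitely generated models of $T_{\forall}^*$ and is uniformly locally finite. Hence $\mathrm{Th}(M^*) = T^*$ is the model completion of $T_{\forall}^*$.

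There is essentially no obstacle here, since every hypothesis has already been checked: the only subtlety worth spelling out in the proof is that ``finite models'' and ``finitely generated models'' of $T_{\forall}^*$ coincide (a consequence of uniform local finiteness from Remark \ref{rem:treef}), so that the class to which we apply Fact \ref{fac:building}(3) matches the class $\Kk$ for which we proved AP, HP and JEP in Proposition \ref{pro:amalgamation of T*}.
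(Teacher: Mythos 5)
Your proposal is correct and is essentially identical to the paper's proof, which is simply the one-line observation ``Note that by Fact \ref{fac:building}, we have:'' preceding the corollary; you have merely spelled out the bookkeeping (finiteness of $\Ll^*$, essential countability, the identification of finite with finitely generated models via Remark \ref{rem:treef}) that the paper leaves implicit.
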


\begin{lem} \label{lem:types in T*} Suppose that $A$ is some finite substructure contained in a model $M \models T^*_\forall$, and $b\in M$. Let $b_1 = \max\Set{b \wedge a}{a \in A}$. In particular, $b_1 \leq a$ for some $a\in A$. Then the quantifier-free type $\tp_{\qf}(b/A)$ is determined by knowing $a$ and the following: 
    \begin{itemize}
    \item Whether or not $b>b_1$.
    \item Whether or not $b_1 \in A$, and if yes, then which element in $A$.
    \item The order type of $b_1$ in $A_{\leq a}$ (\textit{i.e.} knowing the smallest interval containing it).
    \item For each $n$-place relation symbol $R \in \Ll$, knowing whether or not $R^*(b_1,a^n)$ (where $a^n$ is the $n$-tuple $(a,\dots,a)$).
    \item In case $b>b_1$, the quantifier-free type of $b/{E_{b_1}}$ over $\Set{c/E_{b_1}}{c\in A, c>b_1}$ in the structure $M_{b_1}$ (which is a model of $T_\forall$ by Lemma \ref{lem:interpretation}).
    \end{itemize}
\end{lem}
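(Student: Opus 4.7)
Here is my plan. Since $\tp_{\qf}(b/A)$ is the atomic diagram of the substructure $\langle Ab\rangle$ over $A$, it suffices to show that the listed data determines $\langle Ab\rangle$ up to isomorphism over $A$. By Remark \ref{rem:treef}(1), the underlying set of $\langle Ab\rangle$ is contained in $A \cup \{b, b_1\}$, so I only need to pin down the tree-order and the interpretation of each $R^*$ on at most two new elements.

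The tree-order is routine: knowing $a$, whether $b_1 \in A$ (and if so which element), and the order type of $b_1$ in the chain $A_{\leq a}$ fixes the position of $b_1$ relative to $A$; then knowing whether $b = b_1$ or $b > b_1$ (in which case $b$ opens a new open cone above $b_1$, disjoint from all $A$-cones above $b_1$) determines $\leq$ and $\wedge$ on $A \cup \{b, b_1\}$.

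For the $R^*$-relations, I would proceed case-by-case on the base point $d$ in $R^{*}(d, c_1, \dots, c_n)$, using axiom (\texttt{OC}) to discard cases where $d$ is not strictly below every $c_i$, and (\texttt{WD}) to shift arguments into $A$. If $d = b$, then by maximality of $b_1$ nothing in $\langle Ab\rangle$ lies strictly above $b$, so the relation is false. If $d \in A \setminus \{b_1\}$, then any occurrence of $b_1$ or $b$ among the $c_i$ can be replaced with $a$ via (\texttt{WD}) (since $a \wedge b_1 = b_1$ and $a \wedge b = b_1$ are both $> d$ whenever $d < b_1$; otherwise the relation is false), reducing to relations inside $A$. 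The main case is $d = b_1$: by Lemma \ref{lem:interpretation}(2) this amounts to the quantifier-free type of the $E_{b_1}$-classes of $c_1, \dots, c_n$ in $M_{b_1}$, which is captured by the last bullet of the lemma together with the single value $R^*(b_1, a^n)$ from the fifth bullet.

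The subtlety to check, and the reason the single datum $R^*(b_1, a^n)$ suffices (rather than a value for every $A$-cone above $b_1$) when $b_1 \notin A$, is the following ``cone uniqueness'' observation: if $b_1 \notin A$ and $c \in A$ lies above $b_1$, then $c \wedge a \geq b_1$, and since $A$ is closed under $\wedge$, equality would force $b_1 \in A$; hence $c \wedge a > b_1$, so $c$ is $E_{b_1}$-equivalent to $a$. Thus the only $A$-cone above $b_1$ is $a/E_{b_1}$, and (\texttt{WD}) collapses every $R^*(b_1, c_1, \dots, c_n)$ with $c_i \in A$ to $R^*(b_1, a^n)$. Once this is in place, the rest of the verification reduces to mechanical applications of (\texttt{WD}), (\texttt{OC}) and Lemma \ref{lem:interpretation}.
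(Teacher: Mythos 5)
Your proof is correct and follows the same route as the paper's: reduce to determining the structure of $\langle Ab\rangle = A\cup\{b_1,b\}$ via Remark \ref{rem:treef}, use bullets 1--3 for the tree data, then case-analysis on the base $d$ in $R^*(d,-)$ using (\texttt{OC}), (\texttt{WD}) and Lemma \ref{lem:interpretation}; the ``cone uniqueness'' observation you isolate is exactly the parenthetical ``otherwise $b\in A$'' in the paper's argument. Two small slips worth tidying: the claim that nothing in $\langle Ab\rangle$ lies strictly above $b$ only holds when $b>b_1$ (if $b=b_1$ then $a$ itself is above $b$, and that subcase should be folded into your $d=b_1$ analysis, which there uses only the fourth bullet since the fifth does not apply); and the datum $R^*(b_1,a^n)$ is the fourth bullet of the statement, not the fifth.
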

\begin{proof}
    It is an exercise that the first three bullets determine the quantifier-free type of $b$ over $A$ in the tree language $\{\leq,\wedge\}$. Also, by Remark \ref{rem:treef}, the tree generated by $Ab$ is $Ab_1b$, so for knowing the isomorphism type of $b$ over $A$ (\textit{i.e.} the quantifier-free type), it is enough to determine what is $R^*(c,-)$ for any $c \in Ab_1b$. 
    
    In case that $b_1 = b$, the extra structure is already determined as in Case I of the proof of amalgamation above by the fourth bullet, but we elaborate. If $b \in A$ there is nothing to do so assume not. For every $n$-place relation symbol $R \in \Ll$ and any $c \in A$, and any $n$-tuple $f >c$, let $f'$ be like $f$ but replacing every occurrence of $b$ by $a$, so that by (\texttt{WD}), $R^*(c,f)$ holds iff $R^*(c,f')$ holds, but this is known. Now, for $f>b$, note that for every $c \in f$, $c \wedge a >b$ (otherwise $b\in A$), so $R^*(b,f)$ holds iff $R^*(b,a^n)$ which is again known by the fourth bullet.

    In case where $b_1 < b$, by the previous case we already determined $\tp_{\qf}(b_1/A)$. Let $R$ be as above, and $c \in A \setminus \{b_1\}$. Then for any $n$-tuple $f>c$, $R^*(c,f)$ holds iff $R^*(c,f')$ holds, where $f'$ is like $f$ but replacing every occurrence of $b$ with $b_1$ (note that if $c<b$ then $c<b_1$). Also, $R^*(b,f)$ never holds for $f \in Ab_1b$ by (\texttt{OC}), so we are left to determine $R^*(b_1,-)$. Suppose that $b_1<f$. Then $R^*(b_1,f)$ holds iff $M_{b_1} \models R(f/E_{b_1})$ (where $f/E_{b_1}$ is the tuple of appropriate classes) by Lemma \ref{lem:interpretation}, so we are done by the last point.
\end{proof} 

Let $T$ be the model completion of $T_\forall$. Then we have:

\begin{prop} \label{prop: models of T on open cones} 
    Suppose that $M \models T^*$. Then for any $c \in M$, $M_c \models T$ (see Lemma \ref{lem:interpretation}). 

    In fact, letting $T_{oc}$ say that the set of open cones of any center is a model of $T$, we have that $T^* = DT + T_{oc} + T_0$ where $T_0$ is the following axiom scheme: 

    $\bullet$ For all $x<y$ and for each isomorphism type of a model $A$ of $T_\forall$ such that $|A| = 1$, there is some $x<z<y$ such that $\{y/E_{z}\} \cong A$. (By this we mean that the open cone of $y$ of center $z$, with the induced $\Ll$-structure, is isomorphic to $A$.)
\end{prop}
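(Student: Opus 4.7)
The plan is to split the proposition into two parts: first the statement $M_c \models T$ for any $c \in M$, and then the axiomatization $T^* = DT + T_{oc} + T_0$.

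For the first part, I will use that $T$ is the model completion of $T_\forall$ (Fact \ref{fac:building}(3)), hence axiomatized by $T_\forall$ together with the one-point extension scheme: for every $A_0 \subseteq B_0$ finite models of $T_\forall$ with $|B_0 \setminus A_0|=1$, any embedding $A_0 \hookrightarrow N \models T$ extends to an embedding of $B_0$. Lemma \ref{lem:interpretation}(4) already gives $M_c \models T_\forall$, so only the extension scheme is left. Given $A_0 \hookrightarrow M_c$ with image classes $a_i/E_c$ and a one-point extension $B_0 = A_0 \cup \{b_0\}$, I would build, exactly as in Case II of the amalgamation in Proposition \ref{pro:amalgamation of T*}, a one-point extension in $\Kk$ of $\langle c, a_1, \dots, a_n\rangle$ in which a new element $b > c$ sits in a fresh $E_c$-class realizing the isomorphism type of $b_0$ over $A_0$. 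Since $M \models T^*$ is the model completion of $T_\forall^*$, this q.f.\ type is realized in $M$, giving the required class in $M_c$.

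The forward direction $T^* \vdash DT + T_{oc} + T_0$ of the axiomatization is routine: $DT$ holds because any finite tree lies in $\Kk$ and hence embeds into any model of $T^*$, yielding density and infinitely many open cones at each point; $T_{oc}$ is exactly the first claim; and $T_0$ follows by producing in $\Kk$ the one-point tree extension adjoining a fresh meet point between $x$ and $y$ with the prescribed one-element cone type for $y/E_z$, then invoking the extension property.

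For the reverse direction, I would argue by $\omega$-categoricity: since $\Ll^*$ is finite relational and $\Kk$ is uniformly locally finite (Proposition \ref{pro:amalgamation of T*}), $T^*$ is $\omega$-categorical and complete, so it suffices to show $DT + T_{oc} + T_0$ is $\omega$-categorical via a back-and-forth on two countable models. Given a partial isomorphism $f : A \to A'$ on a finite substructure and a new $b \in M$, set $b_1 := \max\{b \wedge a : a \in A\}$ and read off the q.f.\ type of $b$ over $A$ from Lemma \ref{lem:types in T*}. If $b_1 \in A$, the task is to find $b' \in N$ in a specific $E_{f(b_1)}$-class of $N_{f(b_1)}$ realizing a prescribed q.f.\ $\Ll$-type over a finite subset; this is done by QE of $T$ applied to $N_{f(b_1)} \models T$ (using $T_{oc}$), combined with $DT$ to get the exact tree position within that cone. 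If $b_1 = b \notin A$, note crucially that since $A$ is $\wedge$-closed, all elements of $A$ above $b_1$ must lie in a single $E_{b_1}$-class (else their meet would force $b_1 \in A$), so the required $\Ll$-structure in $M_{b_1}$ on the $A$-side is a one-element $T_\forall$-model; then $T_0$ applied in $N$ to a pair $x' < y'$ chosen from $A'$ so as to fix the correct tree interval yields $b' \in N$ with the prescribed one-element cone type.

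The main subtlety lies in the last case: one must carefully choose the witnesses $x', y' \in A'$ so that the $b'$ produced by $T_0$ sits in the correct interval relative to all of $A'$, and then verify via (\texttt{WD}) that the one-cone prescription given by $T_0$ propagates correctly to all other elements of $A'$ above $b'$ (which lie in the same $E_{b'}$-class by $\wedge$-closure). Once this coordination is done, symmetric back-and-forth steps complete the isomorphism, establishing $\omega$-categoricity of $DT + T_{oc} + T_0$ and hence $T^* = DT + T_{oc} + T_0$.
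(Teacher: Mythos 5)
Your proposal is correct and takes essentially the same overall strategy as the paper: prove $M_c \models T$ by Fra\"iss\'e-theoretic considerations, and establish $T^* = DT + T_{oc} + T_0$ by verifying $T^* \vdash DT + T_{oc} + T_0$ and then showing $DT + T_{oc} + T_0$ is $\omega$-categorical via a back-and-forth built on Lemma \ref{lem:types in T*}. The paper's proof of the second part is exactly this and even highlights the same critical sub-case ($b = b_1 \notin A$, a new meet point sandwiched between two existing ones) as the place where $T_0$ is indispensable, so your back-and-forth analysis is on target.

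For the first part you deviate slightly: the paper reduces to the countable model $M$, uses ultrahomogeneity of the Fra\"iss\'e limit and Remark \ref{rem:weakly homogeneous} to show that $M_c$ is itself ultrahomogeneous with the correct age, and concludes that $M_c$ is the Fra\"iss\'e limit of finite $T_\forall$-models; you instead verify the one-point extension scheme for $M_c$ directly, pulling back each required extension to a one-point extension of $\langle c,a_1,\dots,a_n\rangle$ in $\Kk$ and invoking existential closure (equivalently, the extension property) of $M$. Both routes are standard and equivalent; yours is a bit more direct and avoids the reduction to the countable model.

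Two small things worth tightening if you were to write this up. First, in the back-and-forth you treat the cases $b_1 \in A$ and $b = b_1 \notin A$; the remaining case $b_1 < b$ with $b_1 \notin A$ should be explicitly dispatched by adding $b_1$ first (noting $\langle A b_1\rangle = A \cup \{b_1\}$) and then $b$, reducing to the two cases you handle -- this is implicit in your writeup but not stated. Second, your justification that $T^* \vdash DT$ (``any finite tree lies in $\Kk$ and hence embeds into any model of $T^*$'') gives the age, not density; what you actually want is the one-point extension property of $M$ applied to adjoining a point in the interval $(x,y)$ or a fresh cone at $x$. The conclusion is of course correct and easy, so this is only a matter of wording.
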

\begin{proof}
    It is enough to prove the first part for the countable model $M$, which is ultrahomogeneous. By Lemma \ref{lem:interpretation} and Remark \ref{rem:weakly homogeneous} it follows that $M_c$ is also ultrahomogeneous and that the age of $M_c$ is precisely the class of finite $T_{\forall}$-models which implies that $M_c$ is the unique countable model of $T$ by Fact \ref{fac:building}.

    For the second part, note that $T^* \vdash DT$ (By \cite[Lemma 2.12]{Ramsey} or by checking the axioms given in Fact \ref{fac:dt}). It is also easy to see that $T^* \vdash T_0$. The equality follows from a straightforward back-and-forth using Lemma \ref{lem:types in T*} showing that $DT +T_{oc}+ T_0$ is $\omega$-categorical. 
    To see why we need $T_0$, consider the following situation. We have constructed some partial isomorphism $f:A \to B$ where $A \subseteq M$, $B \subseteq N$ are finite and $M,N \models T^*$, and we want to add one point $a \in M$ to the domain of $f$. Suppose that $a''< a < a'$ for some $a',a'' \in A$ which are minimal and maximal respectively, and let $b'=f(a')$, $b''=f(a'')$. For any point $b''<b < b'$, any $c\leq b''$ and any relation $R \in \Ll$, $N \models R^*(c,\ldots,b,\ldots)$ iff $N \models R^*(c,\ldots,b',\ldots)$, and the same is true in $M$. Thus, to extend $f$, we need to find some $b''<b<b'$ such that $M \models R^*(a,e)$ iff $N \models R^*(b,f(e))$ for any tuple of the appropriate length $a<e \in A$. Note that $M \models R^*(a,e)$ iff $M\models R^*(a,(a')^n)$. Let $C$ be the $\Ll$-structure induced by $M_a$ on $\{a'/E_{a}\}$. By $T_0$ we can find some $b'' < b <b'$ such that for all $R \in \Ll$, $N \models R^*(b,(b')^n)$ iff $C \models R((a'/E_{a})^n)$ iff $M \models R^*(a,(a')^n)$, so we are done. 
\end{proof}

\begin{cor} \label{cor:T* NIP} $T^*$ is NIP iff $T$ is NIP. Moreover, when $T$ is NIP, $T^*$ has finite dp-rank.\end{cor}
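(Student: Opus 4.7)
\emph{Plan.} I would separate the two directions of the equivalence, and derive the finite dp-rank claim from the same analysis.

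The direction $T^*\text{ NIP}\Rightarrow T\text{ NIP}$ is immediate from interpretability. Fix any $c$ in a model $M\models T^*$; by Proposition \ref{prop: models of T on open cones}, $M_c\models T$, and by Lemma \ref{lem:interpretation}(2), for every quantifier-free $\Ll$-formula $\psi(\bar y)$ and every tuple $\bar f$ above $c$, $M_c\models\psi(\bar f/E_c)$ iff $M\models \psi^*(c,\bar f)$. Thus the $\Ll$-structure on $M_c$ is interpretable in $M$ with parameter $c$, and any IP configuration in $T$ witnessed by some $R\in\Ll$ pulls back verbatim to IP in $T^*$ witnessed by $R^*(c,\bar y)$.

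For the direction $T\text{ NIP}\Rightarrow T^*\text{ NIP}$, I would use QE in $T^*$ (so every formula is a Boolean combination of tree-atomic formulas and formulas of the form $R^*(z,\bar w)$) together with the fact that NIP is preserved under Boolean combinations, reducing to showing that each such atomic formula is NIP. Tree-atomic formulas are NIP by Fact \ref{fac:dt}, so only $R^*(z,\bar w)$ need analysis. Assume for contradiction it has IP and extract, via Lemma \ref{lem:ipform}(3), a $\emptyset$-indiscernible sequence $(a_i)_{i<\omega}$ together with a tuple $b$ so that $R^*$ alternates on even/odd $i$, where the center $z_i$ and the argument $\bar w_i$ are specific coordinates of $a_ib$. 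By indiscernibility the slot assignment is uniform in $i$: either the center sits in $b$ (a fixed element), or it is a coordinate of $a_i$ moving with $i$; similarly for $\bar w_i$. In the first case, Lemma \ref{lem:interpretation}(2) translates the configuration, after passing to $E_c$-classes, into an IP pattern for $R$ inside $M_c\models T$, contradicting NIP of $T$. In the second case, using (\texttt{OC}) plus the tree shape of $(a_i)$ (standard monotone/fan/comb classification of tree-indiscernibles), I locate a fixed element $c^*<z_i$ lying below all relevant coordinates and use (\texttt{WD}) to rewrite $R^*(z_i,\bar w_i)$ as $R^*(c^*,\bar w'_i)$ with $\bar w'_i$ landing in one open cone of $c^*$; then the same translation produces an IP configuration in $M_{c^*}\models T$, again a contradiction.

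The main obstacle is the second case: when the ``center coordinate'' genuinely moves with $i$, the tree motion has to be absorbed into a fixed ambient point $c^*$ before the pattern can be interpreted as a $T$-statement. Formalizing this requires a careful case analysis of the tree-type of $b$ over the indiscernible sequence $(a_i)$ and a possible Ramsey-compactness extraction to obtain true $\emptyset$-indiscernibility of the classes $a_i/E_{c^*}$ inside $M_{c^*}$; this is standard but delicate.

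\emph{Finite dp-rank.} The same case analysis would be run on ict-patterns via Remark \ref{rem:ict pattern}: a depth-$\kappa$ ict-pattern in $T^*$ splits into a tree component (of depth $\leq 1$ by dp-minimality of $DT$, Fact \ref{fac:dt}) and components living in the $\Ll$-structures on various cones. Since $T$ is $\omega$-categorical (Fact \ref{fac:building}(2)) in a finite relational language, NIP forces $\dpr(T)<\omega$, and combining the pieces yields a finite bound of the shape $\dpr(T^*)\leq 1+\dpr(T)<\omega$.
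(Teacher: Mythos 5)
Your left-to-right direction ($T^*$ NIP $\Rightarrow T$ NIP) is correct and matches the paper. For the converse and the dp-rank bound, however, your route differs from the paper's, and the sketch has a genuine gap.

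The paper does not do a formula-by-formula indiscernibility analysis at all. Instead it counts $1$-types: by Remark~\ref{rem:treef} and QE, a $1$-type over a finite substructure $A$ is described by Lemma~\ref{lem:types in T*}, whose tree-combinatorial data contribute only polynomially many possibilities in $|A|$, and whose $\Ll$-part is a quantifier-free $1$-type over a finite set in the open-cone structure $M_{b_1}\models T$ --- and for the NIP theory $T$ in a finite relational language this is again polynomially bounded by Sauer--Shelah. The Sauer--Shelah lemma then converts the polynomial bound on \emph{all} $1$-types into NIP, and the same bound immediately caps the depth of any ict-pattern, giving finite dp-rank. So the ``moreover'' part comes for free, with no splitting of ict-patterns needed.

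The concrete flaw in your Case II is the proposed use of (\texttt{WD}). That axiom lets you replace the arguments $\bar w$ of $R^*(z,\bar w)$ by any $\bar w'$ with $w_j \mathrel{E_z} w'_j$, but it fixes the center $z$; it gives no relation whatsoever between $R^*(z_i,-)$ and $R^*(c^*,-)$ for $c^* < z_i$. Indeed the construction deliberately puts \emph{independent} copies of $T$-structure on the cones of different centers, so ``pushing the center down'' to a fixed $c^*$ cannot work: $M_{z_i}$ and $M_{c^*}$ are unrelated $\Ll$-structures. The viable move in that case is the opposite one --- keep the center $z_i$, and use (\texttt{WD}) to replace the \emph{arguments} by something moving with $i$ (for instance $z_j^n$ for $j>i$), after which $\emptyset$-indiscernibility of the sequence in $\Ll^*$ forces the truth value to be constant in $i$ rather than alternating. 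This is close in spirit to the argument in Lemma~\ref{lem:indiscernible in tr -> in T*}, which is where the paper actually carries out this kind of term-and-cone bookkeeping (but for a different purpose). You also elide the fact that atomic $\Ll^*$-formulas have genuine $\wedge$-terms, not just raw variables, so ``the center is a coordinate of $a_ib$'' needs the reduction of terms via Remark~\ref{rem:treef} first. Finally, the claim that an ict-pattern ``splits into a tree component of depth $\leq 1$ and cone components'' is not justified as stated: the formulas in an ict-pattern need not decompose along those lines, and dp-minimality of DT only controls tree-reduct indiscernibility, which is a weaker statement. None of this is irreparable, but as written the reduction in Case II would not go through, and the dp-rank bound is asserted rather than proved.
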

\begin{proof}
    If $T$ has IP, then as $T^*$ interprets $T$ with parameters (by Proposition \ref{prop: models of T on open cones}), it follows that $T^*$ has IP. 

    Suppose that $T$ is NIP. By the Sauer-Shelah lemma \cite[Lemma 6.4]{Sim}, it is enough to see that (*) there is some $k<\omega$ such that for any finite set $A \subseteq M \models T^*$ there are at most $|A|^k$ 1-types over $A$. By Remark \ref{rem:treef}, we may assume that $A$ is a substructure and by quantifier elimination it is enough to count quantifier-free types. Given $b$, let $b_1 = \max \Set{a \wedge b}{a \in A}$. By Lemma \ref{lem:types in T*}, it is enough to bound the number of possible quantifier-free types of $b/{E_{b_1}}$ over $\Set{a/E_{b_1}}{a\in A, a>b_1}$ in the structure $M_{b_1}$ (the fourth bullet there adds at most some bounded (in terms of $|\Ll|$) number of types). But since $T$ is NIP and $\Ll$ is finite, the analog of (*) for $T$ holds by Sauer-Shelah. 

    For the ``moreover'' part, note that the previous paragraph gives much more than just NIP: to show NIP it is enough to count $\varphi$-types over finite sets for any $\varphi$, and here we counted the full type. Suppose that the number of 1-types over $A$ is bounded by $|A|^k$. But then it cannot be that there is an ict-pattern (see Remark \ref{rem:ict pattern}) of depth $k+1$, so the dp-rank is $< k+1$. 
\end{proof}

We will mainly work with the following two examples (note that both satisfy the conditions listed in the beginning of this section).
\begin{exa} \label{exa:our examples}
    When $T_\forall$ is the theory of graphs in the language $\{R\}$ of graphs (so that $T$ is the theory of random graphs), denote $T^*$ by DTR. 

    When $T_\forall$ is the theory of two equivalence relations in the language $\{E_1,E_2\}$ (so that $T$ is the theory of two independent equivalence relations with infinitely many classes), denote $T^*$ by DTE$_2$. 
\end{exa}

\begin{cor} \label{cor:DTR has IP while TRE2 is NIP} DTR has IP and DTE$_2$ is NIP.
\end{cor}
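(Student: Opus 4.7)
The plan is to deduce both parts directly from Corollary \ref{cor:T* NIP}, which tells us that $T^*$ is NIP if and only if $T$ is NIP. So it suffices to analyze the base theories $T$ in each of the two cases listed in Example \ref{exa:our examples}.

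For DTR, the relevant $T_\forall$ is the theory of graphs, whose model completion $T$ is the theory of the random graph. The random graph has IP: the edge relation $R(x,y)$ itself has IP, as witnessed by any sequence of vertices realizing every subset of a given infinite family in the obvious manner (a standard back-and-forth argument using that every finite pattern is realized). Therefore by Corollary \ref{cor:T* NIP}, DTR has IP.

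For DTE$_2$, the relevant $T_\forall$ is the theory of two equivalence relations, whose model completion $T$ is the theory of two independent equivalence relations, each with infinitely many infinite classes. This theory is well-known to be stable (it is interdefinable with two cross-cutting equivalence relations, which admits elimination of quantifiers and whose formulas define Boolean combinations of the equivalence classes; no formula has the order property). In particular it is NIP. Therefore by Corollary \ref{cor:T* NIP}, DTE$_2$ is NIP.

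There is no genuine obstacle here — the corollary is a one-line consequence of Corollary \ref{cor:T* NIP}. The only thing to make explicit in the writeup is the classical fact that the random graph has IP and that the theory of two independent equivalence relations is NIP (indeed stable), both of which are standard.
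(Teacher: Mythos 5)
Your proposal is correct and matches the paper's (implicit) argument: the corollary follows immediately by applying Corollary \ref{cor:T* NIP} to the two cases of Example \ref{exa:our examples}, together with the standard facts that the random graph has IP and that the theory of two independent equivalence relations is stable (hence NIP). The paper itself confirms this reading later, in the proof of Corollary \ref{cor:main corollary about examples}, where it invokes the stability of EQ$_2$.
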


The following is a bit technical, but it will be very useful later. Work in a monster model $\Cc \models T^*$. 

\begin{lem} \label{lem:indiscernible in tr -> in T*} Suppose that $I$ is an infinite $\emptyset$-indiscernible of single elements which is increasing or decreasing and without a maximal element (in the tree order), and that $I$ is $A$-indiscernible in the tree language $\{\leq, \wedge\}$. Then $I$ is $A$-indiscernible (in $\Ll^*$). \end{lem}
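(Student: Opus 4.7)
By quantifier elimination for $T^*$ it suffices to show that each atomic $\Ll^*$-formula assigns the same truth value to any two increasing tuples from $I$ of the same length (with parameters from $A$). Atomic formulas in the tree language $\{\leq, \wedge\}$ are handled by hypothesis, so the problem reduces to formulas $R^*(t_1, \dots, t_{m+1})$ with $R \in \Ll$ and each $t_j \in A \cup \{b_{k_1}, \dots, b_{k_n}\}$. I treat the case that $I$ is increasing; the decreasing case is symmetric.

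By axiom (\texttt{OC}) we may assume $t_1 < t_j$ for $j \geq 2$, a condition which is preserved by $A$-indiscernibility in the tree language. The central observation is the following \emph{meet lemma}: if $a, a' \in A$ both satisfy $a > b_i$ and $a' > b_i$ for every $i$, then $a \wedge a' > b_i$ for every $i$ as well. Indeed $a \wedge a' \geq b_i$ for every $i$, and if $a \wedge a' = b_{i_0}$ for some $i_0$ then $A$-indiscernibility in $\{\leq, \wedge\}$ applied to the formula $x = a \wedge a'$ (whose parameter lies in the substructure generated by $A$) forces $b_j = a \wedge a'$ for every $j$, contradicting distinctness of the $b_j$.

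Suppose first that $t_1 = b_{k_s}$. By (\texttt{OC}) and tree-indiscernibility, every $A$-argument must lie above every element of $I$, while every $b$-argument has index greater than $k_s$. The meet lemma, together with the trivial facts that $b_{k_{s'}} \wedge b_{k_{s''}} \in I$ and $a \wedge b_{k_{s'}} = b_{k_{s'}}$ when $a > b_{k_{s'}}$, shows that all the $t_j$ (for $j \geq 2$) lie in a single $E_{b_{k_s}}$-class. Fix any $b_* \in I$ with $b_* > b_{k_s}$, which exists because $I$ has no maximum. By (\texttt{WD}) the truth of $R^*(t_1,\dots,t_{m+1})$ equals that of $R^*(b_{k_s}, b_*, \dots, b_*)$, a formula involving only elements of $I$; its value is then governed by the $\emptyset$-indiscernibility of $I$ in $\Ll^*$ applied to pairs, and is thus independent of $k_s$. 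The case $t_1 = a \in A$ is analogous: a similar analysis shows that the $E_a$-class of each $t_j$ depends only on $a$ and the $A$-data of $t_j$ (not on the specific $b$-indices), so the tuple of classes in $M_a$ is constant across increasing tuples, and $R^*(a, t_2, \dots, t_{m+1})$ takes a fixed value.

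The main obstacle is the meet lemma above, which is the essential bridge between the hypothesis of $A$-indiscernibility in the pure tree language and the need to reduce $R^*$-formulas to formulas involving only elements of $I$, where the $\emptyset$-indiscernibility of $I$ in $\Ll^*$ can then be brought to bear.
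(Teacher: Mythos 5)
Your plan is on the right track — reduce via QE to atomic $R^*$-formulas, use (\texttt{OC}) and tree-indiscernibility to constrain the arguments, and then invoke (\texttt{WD}) to replace the $I$-arguments by a single large element $a_l$ so that $\emptyset$-indiscernibility finishes the job. This is essentially the strategy the paper takes. However, there is a genuine gap in the reduction step, and it is not cosmetic.

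You claim that the problem reduces to formulas $R^*(t_1, \dots, t_{m+1})$ with each $t_j \in A \cup \{b_{k_1},\dots,b_{k_n}\}$. But atomic $\Ll^*$-formulas involve terms in the language $\{\leq,\wedge\}$, and $\wedge$ is a function symbol: the arguments of $R^*$ are terms $t(\bar x, \bar a)$, and their values on an increasing tuple from $I$ need not lie in $A\cup I$. Concretely, if $a\in A$ is incomparable to every element of $I$ (which is one of the three cases allowed by tree-indiscernibility), then $a\wedge b_i$ is a constant element strictly below $I$ that is generally in neither $A$ nor $I$. When such an element lands in the first slot of $R^*$, your case analysis — which treats only $t_1\in I$ and $t_1\in A$ — has no branch to handle it, and the argument does not go through. (When it lands in a later slot, things are more forgiving: either (\texttt{OC}) rules the formula false in the case $t_1\in I$, or the slot is a constant in the case $t_1\in A$; but the first slot is not covered.) The paper resolves exactly this by first assuming $A$ is a substructure, then enlarging $A$ to $D = A\cup\{d\wedge a_0 : d\in A,\ d\wedge a_0<a_0\}$, observing that $I$ is still tree-indiscernible over $D$ and that every term then evaluates either to a fixed position $e_k$ of the increasing tuple or to a fixed element of $D$. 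A separate case (their Case III, with the first slot in $D$ and below $I$) then handles the new elements.

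Relatedly, your "meet lemma" is correct, but it is a very special case of a more useful observation (which the paper makes at the outset): for any $d\in A$, either $d>I$, or $d<a_0$, or $d\wedge a_i$ is constant and lies strictly below $a_0$. This trichotomy is what tells you what the set $D$ has to be; the meet lemma is a consequence of it (when $d=a\wedge a'$ for $a,a'>I$, the middle and third alternatives are ruled out), not the central mechanism. The central mechanism is (\texttt{WD}) applied with a \emph{single} sufficiently large index $l$ shared between the two increasing tuples being compared, so that one never needs any indiscernibility over $A$ in the expanded language — only the given $\emptyset$-indiscernibility of $I$ in $\Ll^*$. Your write-up gestures at this in the $t_1\in I$ case, but the vague sentence "a similar analysis shows that the $E_a$-class of each $t_j$ depends only on $a$ and the $A$-data of $t_j$" in the $t_1\in A$ case needs to be replaced by this concrete (\texttt{WD})-with-shared-$l$ argument, and supplemented with the missing third case.
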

\begin{proof}
    Note that without the assumption of having no maximal element in $I$, the result might not be true. For example, suppose that $P(x)$ is some unary predicate in $\Ll$, and that $I = (a_i)_{i<\omega}$ is a decreasing sequence strictly bounded by some $c$. Then $I$ is $c$-indiscernible in the tree language, but it could be that $P^*(a_0,c)$ has a different truth value than $P^*(a_i,c)$ for $0<i$. 
    
    By maybe reversing the order and taking an infinite increasing subsequence (which exists as $I$ has no maximum), we may assume that $I$ is increasing and has order-type $\omega$ and write $I = (a_i)_{i<\omega}$. Note that as $I$ is $A$-indiscernible in the tree language, it follows that for any $d \in A$, either $d>I$, $d<a_0$ or $d \wedge a_i$ is constant and $= d \wedge a_0 <a_0$ for all $i<\omega$. We may assume that $A$ is a substructure. Let $D =  A\cup \Set{d \wedge a_0}{d\in A,d\wedge a_0 <a_0}$. Then $D$ is a substructure, $I$ is still indiscernible over $D$, and in addition, for any $i<\omega$ and any $d \in D$, either $a_i \wedge d = a_i$ or $a_i \wedge d \in D$.

    It follows that for any term $t(x,y)$ and any  increasing tuple $e \in I$ and any tuple $d \in A$, $t(e,d)$ is either $\in e$, or it $\in D$. In the first case, by indiscernibility, it is always the $k$'th element in the tuple $e$ (regardless of which tuple $e$ we take), which we denote by $e_k$, or it is constant.

    To show indiscernibility, by quantifier elimination it is enough to consider formulas of the form $\varphi(x,y) = R^*(t_0(x,y),t_1(x,y),\dots,t_n(x,y))$ when $R \in \Ll$ is some $n$-place relation and $t_0,\dots,t_n$ are terms. We show that for any increasing tuple $e$ from $I$ and any tuple $d\in D$, $\varphi(e,d)$ has the same truth value. By the previous paragraph, for any such $e$ and $d$, $t_i(e,d) = e_{k_i}$ for some constant $k_i < |e|$ or it is constantly $d_i \in D$ for any $i<n+1$.
    
    Let $s \subseteq n+1$ be the set of indices where $t_i(e,d) \in e$ (it does not depend on $e$). Divide into cases.

    \underline{Case I}: $s = \emptyset$. Then there is nothing to do.

    \underline{Case II}: $0 \in s$. If for some $i \notin s$, $d_i \not > I$, then by indiscernibility $d_i \not > e_{k_0}$ so by (\texttt{OC}), $\varphi(e,d)$ is always false and we are done. The same is true if for some $0<i \in s$, $e_{k_0}\geq e_{k_i}$. So we may assume that $d_i >I$ for all $i \notin s$ and that $e_{k_i} > e_{k_0}$ for all $0<i\in s$. By (\texttt{WD}), $R^*(e_{k_0},t_1(e,d),\dots,t_n(e,d))$ holds iff $R^*(e_{k_0},a_l,a_l,\dots,a_l)$ holds for any large enough $l$ (larger than the index of $e_{k_0}$ in $I$). This is because $t_i(e,d) \wedge a_l > e_{k_0}$ for all $0<i<n+1$. Note that such an $l$ exists since $I$ has no maximal element. Finally, if $e'$ is some other increasing tuple and $l$ is perhaps even larger, then the truth value of $R^*(e'_{k_0},a_l,a_l,\dots,a_l)$ is the same since $I$ is $\Ll^*$-indiscernible (over $\emptyset$).

    \underline{Case III}: $0 \notin s$. Again we may assume that $d_0 <I$ as otherwise by (\texttt{OC}), $\varphi(e,d)$ is always false. For simplicity, assume that $s = \Set{0<i<n+1}{i<m}$ for some $m\leq n+1$. Choose $l$ greater than all indices appearing in $e$. Then by (\texttt{WD}), (*) $R^*(d_0,e_{k_1},\dots,e_{k_m},d_{m+1},\dots,d_n)$ holds iff $R^*(d_0,a_l,\dots,a_l,d_{m+1},\dots,d_n)$ holds. If $e'$ is another increasing tuple, let $l$ be even larger than the indices appearing in $e'$, so (*) also holds for $e'$, and thus they have the same truth value. 
\end{proof}

\subsection{The type \texorpdfstring{$\pi_T$}{pi(T)}} \label{sec: the type}

Suppose that $T_{\forall}$, $T_{\forall}^*$, $T$ and $T^*$ are as above. 

Let $M\models T^*$ be a model with $\mathfrak{C}$ the monster, and $B$ a branch of $M$ (a maximal chain), consider the partial type $\rho_T(x)=\Set{x>b}{b\in B}$. 

\begin{prop}\label{prop:rho determines a complete type} $\rho_T$ determines a unique complete type $q_T(x)$ over $M$.\end{prop}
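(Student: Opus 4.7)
The plan is to reduce, via the quantifier elimination of $T^*$ and Lemma \ref{lem:types in T*}, to showing that for any finite substructure $A\subseteq M$ the data determining $\tp_{\qf}(a/A)$ in that lemma depend only on $A$, $B$, and $M$. Consistency of $\rho_T$ is automatic: since $T^*\vdash$ DT, no branch of $M$ can have a maximum (any element of any open cone above a purported $\max B$ would properly extend $B$, contradicting maximality), so for any finite $B_0\subseteq B$ there is some $b\in B$ above all of $B_0$, whence $\rho_T$ is finitely satisfiable in $M$. Fix a realization $a\models\rho_T$ in $\mathfrak C$ and set $b_1:=\max\{a\wedge a' : a'\in A\}$.

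First I would compute $a\wedge a'$ for each $a'\in A$. If $a'\in B$ then $a>a'$ by $\rho_T$, so $a\wedge a'=a'$. If $a'\notin B$ then maximality of $B$ in $M$ forces $a'$ to be incomparable with some $b\in B$; using $a>b$ (so $a\wedge b=b$) and associativity of $\wedge$ gives $a\wedge a' = (a\wedge a')\wedge b = (a\wedge b)\wedge a' = a'\wedge b$, and an analogous argument shows this value does not depend on the choice of such $b$. In either case $a\wedge a'\in M$ and is determined by $a'$ and $B$ alone, so the same is true of $b_1$. The key claim is then that $b_1\in B$: since $b_1\le a$ and every $b\in B$ satisfies $b<a$, the elements $b_1$ and $b$ both lie in the chain $\{\le a\}$ and are therefore pairwise comparable, so $B\cup\{b_1\}$ is a chain in $M$, and the maximality of $B$ forces $b_1\in B$.

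Because $B$ has no maximum, we may pick $b\in B$ with $b>b_1$. Then $a>b>b_1$ gives $a\mathrel{E}_{b_1}b$, so the open cone of $a$ at $b_1$ coincides with $b/E_{b_1}$, a specific class in $M_{b_1}$ that does not depend on the choice of $b$ (any two such $b$'s meet above $b_1$). Now each of the items in Lemma \ref{lem:types in T*} is determined by $A$, $B$, and $M$: the tree-theoretic data because $b_1\in B\cap M$ is fixed, and the item about the quantifier-free type of $a/E_{b_1}$ in $M_{b_1}$ because $a/E_{b_1}=b/E_{b_1}$ is a named element of $M_{b_1}$. Hence $\tp_{\qf}(a/A)$ is the same for every realization of $\rho_T$, and by QE we conclude that $\rho_T$ determines a unique complete type over $M$. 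The main obstacle is the key claim $b_1\in B$, which hinges essentially on the maximality of $B$ as a chain in $M$.
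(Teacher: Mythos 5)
Your proof is correct, but it takes a slightly different route from the paper. The paper does not invoke Lemma~\ref{lem:types in T*}; it argues directly from quantifier elimination: for $m \in M \setminus B$ there is $b \in B$ with $b \not< m$, giving $\rho_T \vdash x \wedge m = b \wedge m$, so the tree type over $M$ is determined and $Md$ is a substructure for any realization $d$; it then determines the truth of each $R^*(c,f)$ with $c \in Md$ by noting $R^*(d,-)$ is empty by (\texttt{OC}) and, for $c \in M$, replacing every occurrence of $d$ in $f$ with some $b \in B$ above $c$ (which exists since $B$ has no maximum) and appealing to (\texttt{WD}). You instead fix a finite $A \subseteq M$, compute $b_1 = \max\{a \wedge a' : a' \in A\}$, prove $b_1 \in B$ from maximality of the branch, and observe $a/E_{b_1} = b/E_{b_1}$ for $b \in B$ above $b_1$, concluding via the classification of quantifier-free $1$-types in Lemma~\ref{lem:types in T*}. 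The heart of both arguments is the same observation that a realization of $\rho_T$ is $E_{b_1}$-equivalent to an element of $B$; your version is a more modular reduction to the classification lemma, the paper's is a somewhat shorter substitution argument that treats all of $M$ at once rather than one finite $A$ at a time. One step you pass over: the first equality in $a\wedge a' = (a\wedge a')\wedge b = (a\wedge b)\wedge a' = a'\wedge b$ does not follow from associativity alone. You need $a\wedge a' \leq b$, which holds because $a\wedge a'$ and $b$ are both $\leq a$ and hence comparable, and $a\wedge a' \geq b$ would give $a' \geq b$, contradicting the incomparability of $a'$ and $b$.
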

\begin{proof} 
    For every $m \in M \setminus{B}$, there is some $b \in B$ such that $b \not < m$, and it follows that $\rho\vdash x \wedge m = b \wedge m$. This implies that $\rho$ already determines a complete type over $M$ in the language of trees $\{\leq, \wedge\}$ and that for every $d\models \rho$, $Md$ is a substructure. 

    Hence, by QE it is enough to determine formulas of the form $R^*(c,-)$ for $c \in Md$. If $c = d$, $R^*(c,-)$ is empty in $Md$ by (\texttt{OC}). If $c \in M$, then for any tuple $c<f \in Md$, if $f$ does not contain $d$ there is nothing to do and otherwise $R^*(c,f)$ holds iff $R^*(c,f')$ holds where $f'$ is like $f$ with every occurrence of $d$ replaced by $b$ for some $c<b \in B$ (which must exist by choice of $B$).
\end{proof}

\begin{prop} \label{prop:dpr(q_T) >= dpr(x=x)} The dp-rank of $q_T$ is at least $\dpr(T)$ (the dp-rank of the partial type $x=x$ in $T$). In particular, if $T$ has IP, so does $q_T$ (see Remark \ref{rem:dp-rank infinite -> IP}).
\end{prop}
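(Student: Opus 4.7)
The plan is to transfer an ict-pattern from $T$ up to $q_T$ using the fact that $T$ is interpreted on the open cones of each point of $\mathfrak{C}$. By Remark \ref{rem:ict pattern} it suffices, for every $\kappa\leq \dpr(T)$, to exhibit an ict-pattern of depth $\kappa$ in $\Ll^*$ whose witnesses realize $q_T$. The crucial choice is to place the ``center'' parameter strictly \emph{above} the branch $B$, so that realizations of $q_T$ lying above it are free to occupy arbitrary open cones; this is what drives the whole argument.

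I would first fix $c_0 \in \mathfrak{C}$ with $c_0 \models q_T$ (consistent by saturation, since $B$ is a chain so $\rho_T$ is finitely satisfiable). By Proposition \ref{prop: models of T on open cones}, $\mathfrak{C}_{c_0}\models T$, and it inherits enough saturation from the monster $\mathfrak{C}$ to host any ict-pattern of depth $\kappa$. Since $T$ has quantifier elimination (Fact \ref{fac:building}(2)), such a pattern can be chosen with quantifier-free $\Ll$-formulas $(\varphi_\alpha(x,y_\alpha))_{\alpha<\kappa}$, parameters $(e_{i,\alpha}/E_{c_0})_{i<\omega,\alpha<\kappa}$ represented by $e_{i,\alpha}>c_0$ in $\mathfrak{C}$, and witnesses $d_\eta\in\mathfrak{C}_{c_0}$ for each $\eta:\kappa\to\omega$.

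Next I would lift the witnesses and reformulate the pattern in $\Ll^*$. For each $\eta$, pick $f_\eta>c_0$ with $f_\eta/E_{c_0}=d_\eta$; since $c_0>B$ we obtain $f_\eta>B$ by transitivity, so $f_\eta\models q_T$. Set $a_{i,\alpha}:=(c_0,e_{i,\alpha})$ and $\psi_\alpha(x;z,y_\alpha):=\varphi_\alpha^*(z;x,y_\alpha)$, where $\varphi_\alpha^*$ is obtained from $\varphi_\alpha$ by replacing each relation symbol $R$ with $R^*(z,\cdot)$ as in the definition of $T^*$. By Lemma \ref{lem:interpretation}(2), $\psi_\alpha(f_\eta;a_{i,\alpha})$ holds iff $\mathfrak{C}_{c_0}\models\varphi_\alpha(d_\eta,e_{i,\alpha}/E_{c_0})$, iff $\eta(\alpha)=i$. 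Hence $(\psi_\alpha,a_{i,\alpha})$ is an ict-pattern of depth $\kappa$ for $q_T$, yielding $\dpr(q_T)\geq\kappa$. The ``in particular'' statement then follows directly from Remark \ref{rem:dp-rank infinite -> IP}.

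The argument is essentially a routine unwinding of the construction of $T^*$, and the only point that must be spotted is why $c_0$ has to be placed above $B$. If instead $c_0\leq b$ for some $b\in B$, then any $d\models q_T$ satisfies $d>b>c_0$ and hence $d\wedge b=b>c_0$, which forces $d/E_{c_0}$ to equal $b/E_{c_0}$; all realizations of $q_T$ would then sit in a single fixed open cone of $c_0$, so no $\eta$-dependence is possible and the transfer fails. Placing $c_0>B$ is precisely what unlocks every open cone of $c_0$ for the witnesses $f_\eta$ and makes the pattern go through.
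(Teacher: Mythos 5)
Your proposal is correct and follows essentially the same route as the paper's proof: fix $c_0 \models q_T$, transfer an ict-pattern of quantifier-free $\Ll$-formulas from the open-cone structure $\mathfrak{C}_{c_0} \models T$ (using QE in $T$) into $\Ll^*$ via Lemma \ref{lem:interpretation}, and observe that the lifted witnesses lie above $B$ and hence realize $q_T$. Your closing remark about why the center must lie above $B$ is correct but automatic once one takes $c_0 \models q_T$, since $q_T$ extends $\rho_T = \{x > b : b \in B\}$; the paper simply makes this choice without comment.
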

\begin{proof}
    We use the equivalent definition of dp-rank using ict-patterns, see Remark \ref{rem:ict pattern}. 

    Assume that $\dpr(T) \geq \kappa$. Let $c \models q_T$. In the $\Ll$-structure $\Cc_c = \Set{a/E}{a>c}$ which is a (saturated) model of $T$ by Proposition \ref{prop: models of T on open cones}, we can find an ict-pattern consisting of quantifier-free formulas $(\varphi_\alpha(x,y_\alpha))_{\alpha<\kappa}$ ($x$ a single variable) and an array of tuples $(a_{i,\alpha})_{i<\omega,\alpha<\kappa}$ witnessing this (by quantifier elimination in $T$). 

    Choose some tuples $d_{i,\alpha}$ such that $c < d_{i,\alpha}$ and $d_{i,\alpha}/E_c = a_{i,\alpha}$. For each $\eta:\kappa \to \omega$, there is some $b_\eta \in \Cc_c$ (an element) such that $\Cc_c\models \varphi(b_\eta,a_{i,\alpha})$ iff $\eta(\alpha) = i$. Let $e_\eta >c$ be such that $e_\eta/E_c = b_\eta$. Then by Lemma \ref{lem:interpretation}, we have that $\Cc\models \varphi_\alpha^*(c,e_\eta,d_{i,\alpha})$ iff $\eta(\alpha) = i$. Since $c < e_\eta$, $e_\eta \models \rho$, so $e_\eta \models q_T$ by Proposition \ref{prop:rho determines a complete type}, so we are done. 
\end{proof}

Let $c\models q_T$ in $\mathfrak{C}$ and $\pi_T(x)=q_T(x)\cup\{x<c\}$. 

\begin{prop} $\pi_T(x)$ does not fork over $M$ and moreover it is finitely satisfiable in $M$ (in fact in $B$).\end{prop}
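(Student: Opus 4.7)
The plan is to establish the stronger statement that $\pi_T$ is finitely satisfiable in $B$, and then deduce non-forking over $M$ via the standard fact that finite satisfiability in a subset of $M$ implies non-dividing (and hence non-forking) over $M$.

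For finite satisfiability, take a finite subset of $\pi_T$; after conjunction it has the form $\psi_1(x)\wedge\dots\wedge\psi_n(x)\wedge x<c$ with each $\psi_i\in q_T$. By Proposition \ref{prop:rho determines a complete type}, $\rho_T$ determines the complete type $q_T$ over $M$, so a routine compactness argument (if $\rho_T\cup\{\neg\psi\}$ were consistent for some $\psi\in q_T$, a realization would give a completion of $\rho_T$ different from $q_T$) yields $\rho_T\vdash q_T$. Thus some finite $\{x>b_1,\dots,x>b_k\}\subseteq\rho_T$ implies $\bigwedge_i\psi_i$. Since $B$ is a chain we may reorder $b_1<\dots<b_k$, and it suffices to find $b\in B$ with $b_k<b<c$; the inequality $b<c$ is automatic for $b\in B$ by definition of $\rho_T$.

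To produce such a $b$, I claim $B$ cannot have a maximum. By Fact \ref{fac:dt}, DT guarantees that every point of $M$ has infinitely many open cones of itself as center, hence has points strictly above it. If $b^*$ were the maximum of $B$, then any $a\in M$ with $a>b^*$ would be above every element of $B$ (since $B\subseteq\{y\leq b^*\}$), so $B\cup\{a\}$ would be a strictly larger chain, contradicting maximality of $B$. Hence $B$ has no maximum, and we can pick $b\in B$ with $b>b_k$, proving finite satisfiability in $B$.

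For the non-forking part, extend $\pi_T$ to a global type $\tilde{p}$ finitely satisfiable in $M$ (this is the standard coheir extension, obtained by extending $\pi_T$ to a maximal consistent collection of formulas over $\mathfrak{C}$ each finitely satisfiable in $M$). Any such $\tilde{p}$ is $M$-invariant, hence does not divide (and thus does not fork) over $M$. Therefore $\pi_T\subseteq\tilde{p}$ does not fork over $M$. The only nontrivial point in the whole argument is the observation that branches of DT are unbounded above, which reduces to the density axioms of DT; everything else is bookkeeping.
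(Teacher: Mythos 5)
Your proof is correct and takes essentially the same route as the paper's: the paper observes that, by the preceding proposition, $\pi_T$ is equivalent to $\rho_T(x)\cup\{x<c\}$, from which finite satisfiability in $B$ (and hence non-forking over $M$) is immediate. You have simply spelled out the details the paper leaves implicit — that $\rho_T\vdash q_T$ by compactness, that $B$ has no maximum because $M\models DT$ and hence every point has elements strictly above it, and the standard coheir-extension argument for non-forking.
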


\begin{proof} 
    This follows from Proposition \ref{prop:rho determines a complete type}, as $\pi_T(x)$ is equivalent to $\rho(x)\cup \{x<c\}$. 
\end{proof}

Next we would like to show that $\pi_T$ is dp-minimal and distal, so we will discuss distal types.

Distal theories were introduced by Simon in \cite{Distal} with the aim of capturing the ``purely unstable'' NIP theories. In \cite[Definition 2.4]{Travis}, Nell defined the notion of a ``distal type''. In addition one can define the dual notion. We give this definition now.

\begin{defi}\label{def:distal and co-distal}
    A partial type $\Sigma(x)$ is \emph{distal} if for every tuple $d$, and every indiscernible sequence $I_1+a+I_2$ of realizations of $\Sigma$, where $I_1$ and $I_2$ are infinite without endpoints, $I_1+I_2$ is $d$-indiscernible iff $I_1+a+I_2$ is $d$-indiscernible. 

    A partial type $\Sigma(x)$ over $A$ is \emph{co-distal over $A$} if for every $d \models \Sigma$ and every $A$-indiscernible sequence $I_1 + a + I_2$, if $I_1 + I_2$ is indiscernible over $Ad$, then $I_1 + a + I_2$ is indiscernible over $Ad$. If we do not specify $A$ we mean that $\Sigma$ is co-distal over its domain. 
\end{defi}

\begin{rk}
    Note that in the definition of a co-distal type, both the type and its domain play a role. If we extend either one, then co-distality is preserved. Indeed, suppose that $\Sigma(x)$ is co-distal over $A$, $A'\supseteq A$ and $\Sigma'(x)$ is a partial type over $A'$ extending $\Sigma$.
    Suppose that $d \models \Sigma'$ and that $I = I_1 +a +I_2$ is $A'$-indiscernible, $J=I_1 +I_2$ is $A'd$-indiscernible. Enlarging the tuples in $I$, we can assume that $A'$ is contained in each element from $I$. Since $I$ is $A$-indiscernible and $J$ is $Ad$-indiscernible, it follows by co-distality of $\Sigma$ that $I$ is $Ad$-indiscernible. But then it is $A'd$-indiscernible as well. 
\end{rk}

\begin{rk}
    If $p(x) \in S(A)$ is co-distal over $A$, then it is \emph{compressible} in the sense of \cite{1604.03841}: for every formula $\varphi(x,y)$ there is a formula $\theta(x,z)$ such that for any finite set $A_0 \subseteq A$ there is some $d \in A$ such that $\theta(x,d) \in p$ and $\theta(x,d) \vdash p \restriction A_0$. This easily follows from \textit{e.g.} the proof that strong honest definitions exist in distal theories in \cite[Theorem 9.21]{Sim}. 
\end{rk}

\begin{rk}
    A theory is distal iff every partial type is iff $x=x$ is for $x$ a single variable (see \cite[Theorem 2.28]{Distal}). 

    Similarly, a theory is distal iff every partial type is co-distal iff $x=x$ is for $x$ a single variable. 
\end{rk}

\begin{prop} \label{pro:distal implies NIP} Every distal or co-distal partial type is NIP.
\end{prop}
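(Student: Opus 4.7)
The plan is to prove the contrapositive: if $\Sigma$ has IP, then $\Sigma$ is neither distal nor co-distal. I describe the distal case in detail and then indicate the dual modifications for the co-distal one.

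Suppose $\Sigma(x)$ is distal and has IP, for contradiction. By Lemma \ref{lem:pnip}(3) there is an $\emptyset$-indiscernible sequence $(a_i)_{i<\omega}$ of realizations of $\Sigma$, a tuple $d$, and a formula $\varphi(x,y)\in\mathcal{L}$ such that $\models\varphi(a_i,d)$ iff $i$ is even. A straightforward compactness argument then produces an $\emptyset$-indiscernible sequence $(a_q)_{q\in\mathbb{Q}}$ of realizations of $\Sigma$ together with $d$ for which $\models\varphi(a_q,d)$ iff $q=0$; finite consistency is witnessed by choosing, in the original sequence, increasing tuples of indices that are even precisely at the slot corresponding to $q=0$ and odd elsewhere. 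Write $I_1=(a_q)_{q<0}$, $a=a_0$, $I_2=(a_q)_{q>0}$. Then $I_1,I_2$ are infinite without endpoints, $I_1+a+I_2$ is $\emptyset$-indiscernible (so $a$ also realizes $\Sigma$), and $\varphi(a,d)$ holds while $\varphi(c,d)$ fails for every $c\in I_1\cup I_2$. A Dedekind-cut argument shows that $(I_1,I_2)$ is mutually indiscernible over $a$.

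Next I invoke the ``moreover'' part of Lemma \ref{lem:lascarseq} with $A=a$ and $B=ad$ to extract $(I_1',I_2')$ mutually indiscernible over $ad$ such that $(I_1',I_2')\equiv^{Ls}_{a}(I_1,I_2)$. Preservation of the type over $a$ gives that $I_1'+a+I_2'$ has the same type over $\emptyset$ as $I_1+a+I_2$, hence is $\emptyset$-indiscernible. The Ramsey step underlying the extraction preserves, on each half, the common value of any formula with parameters in $ad$; in particular the constantly true $\neg\varphi(c,d)$ on $I_1,I_2$ passes to $I_1',I_2'$. Combining: $I_1'+I_2'$ is $d$-indiscernible (by mutual $d$-indiscernibility of $(I_1',I_2')$), $I_1'+a+I_2'$ is $\emptyset$-indiscernible of $\Sigma$-realizations, and $\varphi(a,d)\wedge\neg\varphi(c,d)$ for every $c\in I_1'\cup I_2'$ shows that $I_1'+a+I_2'$ is \emph{not} $d$-indiscernible, contradicting distality.

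The co-distal case is entirely dual. Using Lemma \ref{lem:pnip}(4) one starts with an $A$-indiscernible $(a_i)_{i<\omega}$, $d\models\Sigma$, and $\varphi\in\mathcal{L}(A)$ with $\varphi(d,a_i)$ iff $i$ is even; compactness promotes to an $A$-indiscernible $(a_q)_{q\in\mathbb{Q}}$ with $\varphi(d,a_q)$ iff $q=0$; and the analogous extraction over $A$ with $B=Aad$ produces $(I_1',I_2')$ mutually indiscernible over $Ad$ such that $I_1'+a+I_2'$ is $A$-indiscernible and $I_1'+I_2'$ is $Ad$-indiscernible but $I_1'+a+I_2'$ fails to be $Ad$-indiscernible because of $\varphi(d,a)$ versus $\neg\varphi(d,c)$ on $I_1'\cup I_2'$, contradicting co-distality.

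The main delicate point is the joint extraction: one simultaneously requires mutual indiscernibility of the two halves over the \emph{enlarged} base $ad$ (or $Ad$), preservation of the type over $a$ (or $Aa$) so that reinserting $a$ keeps the sequence $\emptyset$-indiscernible (or $A$-indiscernible), and preservation of the constant value $\neg\varphi(\cdot,d)$ on each half. All three follow from the standard proof of Lemma \ref{lem:lascarseq}, but one has to be careful about the base of Lascar-strong type preservation as opposed to the base of mutual indiscernibility.
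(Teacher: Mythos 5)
Your proposal captures the right global strategy (realize the IP witness as a ``spike'' at a single index of a dense sequence and cut there), which is indeed the spirit of the paper's proof. However, there is a genuine gap in the key step.

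You invoke the ``moreover'' part of Lemma \ref{lem:lascarseq} to obtain $(I_1',I_2')$ \emph{mutually} indiscernible over $ad$ with $(I_1',I_2')\equiv^{Ls}_a(I_1,I_2)$, and then assert that the \emph{concatenation} $I_1'+I_2'$ is $d$-indiscernible ``by mutual $d$-indiscernibility of $(I_1',I_2')$.'' This implication is false. Mutual indiscernibility over $ad$ says that $I_1'$ is indiscernible over $adI_2'$ and vice versa, which only constrains pairs of tuples whose \emph{split} between $I_1'$ and $I_2'$ is the same. Indiscernibility of $I_1'+I_2'$ over $d$ additionally requires that, say, a tuple with two entries from $I_1'$ and one from $I_2'$ have the same $d$-type as one with one entry from $I_1'$ and two from $I_2'$; nothing in the hypotheses forces this. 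For a concrete illustration, in a DLO take $I_1'=(q)_{q<0}$, $I_2'=(q)_{q>1}$ and $d=0$: these two sequences are mutually indiscernible over $d$, but $I_1'+I_2'$ is certainly not $d$-indiscernible, as all of $I_1'$ lies below $d$ and all of $I_2'$ above. (And $\equiv^{Ls}_a$ with an $\emptyset$-indiscernible $I_1+I_2$ gives $\emptyset$-indiscernibility of $I_1'+I_2'$, not $d$-indiscernibility.) So the claimed contradiction with distality is not actually obtained.

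There is also a secondary point: in the distal case you extract preserving the Lascar-strong type only over $a$ (i.e.\ over $\emptyset a$), so the extracted elements are only guaranteed to have the same $\emptyset$-type as the originals; if $\Sigma$ has a nonempty domain they may no longer realize $\Sigma$, which the definition of a distal type requires. This one is easily repaired by carrying $\operatorname{dom}(\Sigma)$ in the base throughout (as you in fact do in the co-distal case), but the main gap above is not repaired simply by enlarging the base.

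The paper avoids this trap by arranging $b$-indiscernibility of the \emph{concatenated} ``background'' sequence directly: it first uses Ramsey/compactness to make the sequence of \emph{pairs} $(a_{2i}a_{2i+1})_{i<\omega}$ indiscernible over $Ab$ (while keeping $(a_i)$ an $A$-indiscernible sequence of $\Sigma$-realizations with alternating $\varphi$), and only then uses compactness to stretch to the desired order type with a single inserted element breaking $b$-indiscernibility. Because the odd-indexed elements already form an $Ab$-indiscernible sequence, the concatenation $I_1+I_2$ one ends up with is $b$-indiscernible outright, not merely mutually indiscernible. You could repair your argument by adopting this order of operations — extract first to get $b$-indiscernibility of the spike-free sequence, then do the compactness step — or by building the final configuration in a single compactness argument encoding simultaneously the $\emptyset$- (or $A$-) indiscernibility of the full sequence with the spike, the $b$-indiscernibility of the sequence without the spike, and the truth values of $\varphi$.
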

\begin{proof}
    We start with the distal case. Towards contradiction, suppose that $\Sigma(x)$ is a distal partial type over $A$ which has IP. Let $(a_i)_{i<\omega}$ be an $A$-indiscernible sequence of realizations of $\Sigma$, $\varphi(x,y)$ a formula and $b$ such that $\models\varphi(a_i,b)^{(\,i\,\mbox{even})}$ for all $i < \omega$. 
    By Ramsey and compactness, we may assume that $(a_{2i}a_{2i+1})_{i<\omega}$ is $Ab$-indiscernible. By compactness we can find such a sequence of any order type in which every element has a successor, which easily allows us to contradict distality. 

    For the co-distal case, if $\Sigma(x)$ is co-distal over $A$, let $(a_i)_{i<\omega}$ be an $A$-indiscernible sequence and $b$ any realization of $\Sigma$ witnessing IP and continue as above. 
\end{proof}

\begin{rk}
    One may ask what is the connection between distal and co-distal types. In the case of * being NIP or stable, co-* and * are equivalent, see Fact \ref{fac:pstable} and Lemma \ref{lem:pnip}. In general, we do not know, but it seems that by the proof of \cite[Theorem 2.28]{Distal}, distality and co-distality are equivalent under NIP, and also that co-distality always implies distality. We will not use this. 
\end{rk}

Finally, we have:

\begin{theo} \label{the:pi is dp-minimal and distal} The partial type $\{x<c\}$ is dp-minimal, distal and co-distal. In particular, it is NIP. 

As $\pi_T$ contains $x<c$, it follows that $\pi_T$ is dp-minimal, distal and co-distal.
\end{theo}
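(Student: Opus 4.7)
The plan hinges on one observation: by the meet-tree axiom that $\{y\le c\}$ is linearly ordered, any two realizations of $\{x<c\}$ are comparable, so every $\emptyset$-indiscernible sequence of such realizations is a monotone chain below $c$. When such a chain is infinite with no maximum, Lemma \ref{lem:indiscernible in tr -> in T*} converts any $\Ll^*$-indiscernibility question about it into one in the tree-language reduct $\{\le,\wedge\}$. Thus each of the three properties will be proved by a tree-theoretic argument on the chain $\{y<c\}$ and then lifted.

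For distality, take an indiscernible chain $I_1+a+I_2$ of realizations of $\{x<c\}$ with $I_1,I_2$ infinite and without endpoints, and a tuple $d$ making $I_1+I_2$ be $cd$-indiscernible. The chain is monotone, say increasing. For each component $d_j$ of $d$, the $cd$-indiscernibility forces it into one of three configurations relative to $(a_i)_i$: strictly above every $a_i$, strictly below every $a_i$, or incomparable with every $a_i$ with common meet $a_i\wedge d_j=e_j$ constant. One checks that $a$ inherits the same configuration with each $d_j$: the first two from the chain ordering $I_1<a<I_2$, the third because $a$ must be incomparable with $d_j$ (otherwise $a$, $d_j$, and some $a_i$ would all lie in the linearly ordered set $\{y\le a\}$, contradicting $a_i\wedge d_j=e_j$) and then $a\wedge d_j=e_j$ by an open-cone argument ($a$ lies in the same cone above $e_j$ as the $a_i$, while $d_j$ lies in a different one). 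Pairwise and higher-order meets reduce to these cases, so $I_1+a+I_2$ becomes $cd$-indiscernible in the tree language; Lemma \ref{lem:indiscernible in tr -> in T*} then promotes this to $\Ll^*$.

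For dp-minimality, Fact \ref{fac:dp rank witnessing} allows us to assume any would-be witness to $\dpr(\{x<c\})\ge 2$ comes from two infinite $c$-mutually indiscernible sequences $I_1,I_2$ of single realizations of $\{x<c\}$ and a single $b<c$ cutting both. Then $I_1\cup I_2\cup\{b\}$ sits inside the chain below $c$, a dense linear order; a short fixed-index argument shows that mutual $c$-indiscernibility forces all of $I_2$ uniformly above or uniformly below $I_1$ in the chain, and combined with the hypothesis that $b$ splits both sequences, comparing an element of $I_1$ above $b$ with one of $I_2$ below $b$ (or vice versa) yields a direct contradiction. Lemma \ref{lem:indiscernible in tr -> in T*} again handles the lift to $\Ll^*$.

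Co-distality is the most delicate case, because the sequence $I_1+a+I_2$ may now consist of arbitrary tuples and Lemma \ref{lem:indiscernible in tr -> in T*} does not directly apply. The trick is that $d<c$ lies in the chain and is insensitive to any structure above $c$: for any $z$, one has the identity $z\wedge d=\min(d,z\wedge c)$, which reduces $d$-relations to $c$-relations plus a single comparison in the chain below $c$. Concretely, for each tree-language term $t$ the single-element sequence $(t(e_i))_i$ is $c$-indiscernible; those projections landing below $c$ are realizations of $\{x<c\}$ to which the distality already proved applies, while those landing on or above $c$ contribute only trivial data relative to $d$. A parallel reduction handles the $R^*$-atoms involving $d$, since $R^*(\,\cdot\,,\ldots,d,\ldots)$ depends only on the chain and cone structure at or below $d$. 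This bookkeeping --- propagating $cd$-indiscernibility from $I_1+I_2$ to $a$ through every tree term and every $R^*$-atom --- is the main obstacle in the plan.
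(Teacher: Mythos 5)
Your plan for distality matches the paper's proof exactly: reduce to the tree language via the configuration analysis of each base element relative to the monotone chain, then lift with Lemma \ref{lem:indiscernible in tr -> in T*}. For dp-minimality you take a slightly more hands-on route --- reproving from scratch that one of the two chains must remain tree-indiscernible over $Ad$ (using the observation that mutual indiscernibility forces $I_2$ uniformly above or below $I_1$ in the chain, and that a tree-splitting element $d$ would have to lie strictly inside both), whereas the paper simply cites dp-minimality of $DT$ (Fact \ref{fac:dt}) as a black box to get one tree-indiscernible sequence and then applies the lemma in the forward direction. Both are valid; yours is more self-contained, the paper's is shorter.

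The co-distality step is where your plan has a genuine gap, and you already sense it. The fundamental obstruction is that Lemma \ref{lem:indiscernible in tr -> in T*} is formulated for sequences of \emph{singletons} below $c$, and in the co-distal setup the sequence $I = I_1 + a + I_2$ consists of \emph{arbitrary tuples} which may sit anywhere in the tree. Your proposed term-by-term and atom-by-atom ``bookkeeping'' reduction runs into a real difficulty: while $R^*$-atoms with $d$ in a non-base position reduce cleanly to $c$-atoms (because $d$ and $c$ lie in the same open cone above any point $<d$), atoms of the form $R^*(d, t_1(\bar e), t_2(\bar e))$ genuinely depend on the cone assignment \emph{at} $d$, which is fresh data; knowing the $c$-type of $\bar e$ plus the $cd$-type restricted to $J$ does not obviously pin down how the term values $t_i(\bar a)$ for the middle element fall among the cones over $d$. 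Your identity $z\wedge d = \min(d,z\wedge c)$ controls the tree reduct, but not these $R^*$-atoms, and distality of $\{x<c\}$ only applies to term-projections that land strictly below $c$.

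What the paper does instead is a role swap that sidesteps all this. From the assumed co-distality violation --- a formula $\varphi(x,y)$ over a finite base $A \ni c$ with $\varphi(d,b)$ for all $b\in J$ but $\neg\varphi(d,a)$ --- they manufacture, via $A$-automorphisms and an extraction, an $A$-indiscernible sequence $(a_i d_i)_{i\in\mathbb Z}$ with each $d_i$ a single element below $c$, each $d_i \equiv_A d$, and $\varphi(d_i,a_j)$ holding iff $i\neq j$. The sequence $I'' = (d_i)$ of singletons below $c$ now plays the role that $I$ played in the distality argument, with the tuple $a_0$ playing the role of the external parameter $d$. Since $J'' = (d_i)_{i\neq 0}$ is tree-indiscernible over $Aa_0$ (all its members have the same $Aa_0$-type), the already-proved tree-distality step shows $I''$ is tree-indiscernible over $Aa_0$, and Lemma \ref{lem:indiscernible in tr -> in T*} then applies to $I''$ to give $\Ll^*$-indiscernibility over $Aa_0$, contradicting $\varphi(d_0,a_0)^{\text{(false)}}$ versus $\varphi(d_1,a_0)^{\text{(true)}}$. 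This swap --- trading the arbitrary-tuple sequence for a fresh singleton sequence built from copies of $d$ --- is the missing idea; without it, the direct reduction would have to address the $R^*(d,\cdot,\cdot)$ atoms, and I do not see how to close that without essentially rediscovering the same trick.
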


\begin{proof}
    We start by showing that $\{x<c\}$ is distal. So suppose that $I_1$, $I_2$, $a$ and $d$ are as in the definition: $I_1 + a + I_2$ is an indiscernible sequence of realizations of $\{x<c\}$, $I_1$ and $I_2$ are infinite without endpoints and $I_1 + I_2$ is $d$-indiscernible, and we have to show that $I_1 + a + I_2$ is $d$-indiscernible. 

    As $I = I_1 + a + I_2$ is bounded by $c$, it is either increasing or decreasing. Also, $J = I_1 + I_2$ is indiscernible over $d$, and in particular it is indiscernible over $d$ in the tree language. Thus, for any $e \in d$, either $e>J$, $e<J$ or $e \wedge b$ is constant and $<J$ for all $b\in J$. It follows that the same is true for $I$ and hence $I$ is indiscernible over $d$ in the tree language. Since $I$ is $\Ll$-indiscernible (over $\emptyset$) and has no last element, it follows from Lemma \ref{lem:indiscernible in tr -> in T*} that $I$ is $d$-indiscernible as required.

    Observe that in general, if $I$ is an increasing or decreasing sequence of elements, $I$ is indiscernible in the tree language over a set $A$ iff for any $c,d\in I$, $c \equiv_A d$. This observation will be useful to show co-distality. 

    We show that $\{x<c\}$ is co-distal. Suppose that $I = I_1 + a + I_2$ is $c$-indiscernible, $I_1,I_2$ are infinite without endpoints, $d<c$, $J=I_1 + I_2$ is $dc$-indiscernible but $I$ is not $dc$-indiscernible. By increasing the base $c$ to some finite set $A$ and replacing $I_1$ with an end-segment and $I_2$ with an initial segment we may assume that for some formula $\varphi(x,y)$ over $A$, $\varphi(d,b)$ holds for all $b\in J$ while $\models \neg \varphi(d,a)$. By compactness, we can find an $A$-indiscernible sequence $I'=(a_i)_{i\in \mathbb{Z}}$ such that $J' = (a_i)_{i\in \mathbb{Z}\setminus\{0\}}$ is $Ad$-indiscernible and for all $i \in \mathbb{Z}$, $\models \varphi(d,a_i)$ iff $i \neq 0$. 
    
    By applying $A$-automorphisms, we can find $(d_i)_{i\in \mathbb{Z}}$ such that $\tp(d_i a_j/A) = \tp(d a_0/A)$ for $j=i$ and $\tp(d_i a_j/A) = \tp(d a_1/A)$ for $i \neq j$ from $\mathbb{Z}$ (and these types are different as witnessed by $\varphi(x,y)$). Extracting, we may assume that $(a_i d_i)_{i\in \mathbb{Z}}$ is $A$-indiscernible. In particular, $I''=(d_i)_{i\in \mathbb{Z}}$ is an $A$-indiscernible sequence of elements below $c$, and in particular it is either increasing or decreasing without a maximal element (note that it cannot be constant). Let $J''=(d_i)_{i\in \mathbb{Z}\setminus\{0\}}$. Note that for $i \in \mathbb{Z}\setminus\{0\}$, $d_i \equiv_{Aa_0} d_j$, and thus by the observation above, $J''$ is indiscernible over $Aa_0$ in the tree language. As in the distal case, this implies that $I''$ is $Aa_0$-indiscernible in the tree language, and by Lemma \ref{lem:indiscernible in tr -> in T*} we get a contradiction.  

    Finally we show that $\{x<c\}$ is dp-minimal. By Fact \ref{fac:dp rank witnessing}, we can find some $A$ containing $c$ (such that $|A|$ is finite), two $A$-mutually indiscernible infinite sequences $I$ and $J$ such that for every element $b$ from $I$ or $J$, $b <c$,  and some $d<c$ such that neither $I$ nor $J$ are $Ad$-indiscernible. 

    By compactness we may assume that $I,J$ are both ordered by $\mathbb{Z}$. 
    As DT is dp-minimal (Fact \ref{fac:dt}), it follows that at least one of these sequences, say $I$, is indiscernible over $d$ in the tree language. Also, as $I$ is bounded by $c$, it is either increasing or decreasing. As it is ordered by $\mathbb{Z}$, it has no maximal element. Thus, we may use Lemma \ref{lem:indiscernible in tr -> in T*} to conclude.
\end{proof}

Recall Example \ref{exa:our examples}.

\begin{cor} \label{cor:main corollary about examples} The following hold:
    \begin{enumerate}
        \item DTR gives a counterexample to Question \ref{question}.
        \item DTE$_2$ shows that the Morley sequence in Theorem \ref{the:preserving dp-rank} was necessary even if the theory is NIP.
    \end{enumerate}
\end{cor}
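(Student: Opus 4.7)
The plan is to assemble both counterexamples using the machinery built in Sections \ref{sec:a general construction} and \ref{sec: the type}. In each part I will exhibit a complete NIP type $p \in S(Mc)$ which does not fork over $M$ but whose restriction to $M$ fails the relevant property. The construction is uniform: pick the appropriate base theory $T_\forall$, form $T^*$, take a model $M$ with a branch $B$, realize $c \models q_T$, and complete the partial type $\pi_T$ sitting over $Mc$.

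Concretely, I will fix $M \models T^*$ and a branch $B \subseteq M$, form $q_T \in S(M)$ as in Proposition \ref{prop:rho determines a complete type}, and pick $c \models q_T$. Since $\pi_T(x) = q_T(x) \cup \{x<c\}$ is finitely satisfiable in $B$, I extend it to a global coheir $\hat{p}$ over $M$ and set $p := \hat{p} \restriction Mc$. Then $p$ is a complete type over $Mc$ extending $\pi_T$, is finitely satisfiable in $M$, and therefore does not fork over $M$. Since $\pi_T$ is NIP by Theorem \ref{the:pi is dp-minimal and distal} and any extension of an NIP partial type is NIP, $p$ is NIP. Crucially, because $q_T$ is already complete over $M$ and is contained in $p$, I get $p \restriction M = q_T$, so the analysis reduces to computing the combinatorial complexity of $q_T$.

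For (1), I take $T_\forall$ to be the theory of graphs, so that $T$ is the random graph and $T^* = $ DTR. Since the random graph has IP, Proposition \ref{prop:dpr(q_T) >= dpr(x=x)} combined with Remark \ref{rem:dp-rank infinite -> IP} gives that $q_T$ has IP; hence $p \restriction M = q_T$ has IP while $p$ itself is NIP and non-forking over $M$, producing a negative answer to Question \ref{question}. For (2), I take $T_\forall$ to be the theory of two equivalence relations, so that $T^* = $ DTE$_2$ is NIP by Corollary \ref{cor:DTR has IP while TRE2 is NIP}, and $p$ is dp-minimal (by Theorem \ref{the:pi is dp-minimal and distal} together with the monotonicity of dp-rank under type extension). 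On the other hand Proposition \ref{prop:dpr(q_T) >= dpr(x=x)} yields $\dpr(q_T) \geq \dpr(T) \geq 2$, so the dp-rank strictly drops from $p$ to $p \restriction M$; this shows that the Morley sequence hypothesis of Theorem \ref{the:preserving dp-rank} is genuinely needed, even under NIP. For thoroughness I would also observe that no Morley sequence for $p$ over $M$ can live inside $Mc$: every realization of $\pi_T$ lies strictly above the maximal chain $B$, so outside $M$, and strictly below $c$, so distinct from $c$.

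I do not foresee any real obstacle here, since all the technical heavy lifting has already been packaged into Proposition \ref{prop:dpr(q_T) >= dpr(x=x)}, Theorem \ref{the:pi is dp-minimal and distal} and Corollary \ref{cor:DTR has IP while TRE2 is NIP}. The only subtle ingredient is the passage from the partial type $\pi_T$ to a complete type over $Mc$, for which a global coheir is the natural device because it automatically inherits non-forking over $M$ and preserves NIP and dp-minimality.
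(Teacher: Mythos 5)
Your proof is correct and takes essentially the same approach as the paper's very terse one-line proof, which simply cites Proposition \ref{prop:dpr(q_T) >= dpr(x=x)}, Corollary \ref{cor:T* NIP} and Theorem \ref{the:pi is dp-minimal and distal}. The extra care you take in upgrading $\pi_T$ to a complete type $p \in S(Mc)$ via a global coheir is precisely what the paper relegates to the remark immediately following the corollary, so there is no gap.
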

\begin{proof}
    Both follow immediately from Proposition \ref{prop:dpr(q_T) >= dpr(x=x)}, Corollary \ref{cor:T* NIP} and Theorem \ref{the:pi is dp-minimal and distal}. For (1) we use the fact that the random graph has IP and for (2) that the theory EQ$_2$ of two independent equivalence relations is stable (thus NIP) of dp-rank at least 2 (actually exactly 2). (Corollary \ref{cor:T* NIP} was only used for (2)).
\end{proof}

\begin{rk} Note that Question \ref{question} was phrased for complete types, but by taking a non-forking or even a coheir extension of $\pi_{RG}$ we can find such a type. 
    
Also, note that as we cannot hope that our counterexample is definable over a model by Corollary \ref{cor:positive2}; the next best thing is a coheir, which is what we have. \end{rk}


 
\subsection{Classifying DTR} \label{sec:classifying DTR}

Let us summarize what we have so far. We know that Question \ref{question} has an affirmative answer for (generically) stable types or more generally when the Morley sequence generated by the type is totally indiscernible, when the base is a model and the type is definable over it, and when the theory is NIP or NSOP. DTR is clearly neither, but as we will see, it is NTP$_2$, and even inp-minimal.

To make the presentation a bit easier, we give a precise axiomatization of DTR in the language $\Ll_{TR} = \{\leq, \wedge, R\}$ where $R$ is a ternary relation symbol (this stood for $R^*$ in the previous section where $R$ was the edge relation). DTR is the model completion of the the theory TR (which is the theory $T^*_\forall$ for the theory of graphs), whose axioms are:
\begin{itemize}
\item[(T1)] \quad $\forall x \left( x\leq x \right)$
\item[(T2)] \quad $\forall x,y  \left( x\leq y \mathop{\&} y\leq x \longrightarrow  x=y \right)$
\item[(T3)] \quad $\forall x,y,z  \left( x\leq y \mathop{\&} y\leq z \longrightarrow x\leq z \right)$
\item[(T4)] \quad $\forall x,y,z  \left( y,z\leq x \longrightarrow y\leq z \lor z\leq y) \right)$
\item[(T5)] \quad $\forall x,y,z  \left( x\wedge y\leq x,y \right)$
\item[(T6)] \quad $\forall x,y,z  \left( z\leq x,y \longrightarrow z\leq x\wedge y \right)$
\item[(R1)] \quad $\forall x,y,z  \left( x<y\wedge z \longrightarrow \lnot R(x,y,z) \right)$
\item[(R2)] \quad $\forall x,y,z \left( R(x,y,z) \longrightarrow R(x,z,y) \right)$
\item[(R3)] \quad $\forall x,y,y',z \left( R(x,y,z) \mathop{\&} x<y\wedge y' \longrightarrow R(x,y',z) \right)$
\item[(R4)] \quad $\forall x,y,z \left( R(x,y,z) \longrightarrow x< y, z \right)$
\end{itemize}

To relate this axiomatization to the previous section, note that (R1) comes from irreflexivity, (R2) from symmetry, (R3) from (\texttt{WD}), and (R4) from (\texttt{OC}).

\begin{rk}\label{rem:rdos} Let $R'$ be the relation defined in our theory as, $R'(a,b)$ if and only if $a\land b\neq a, b$ and $R(a\land b,a,b)$. Note that we can replace $R$ with $R'$ and get the same definable sets, while still having quantifier elimination, because $R$ is quantifier-free definable from $R'$ and vice-versa. \end{rk}

 Recall that given a partial type $\Sigma(x)$ over $A$ and a cardinal $\kappa$, the \emph{burden} of $\Sigma$ is less than $\kappa$, denoted by $\bdn(\Sigma)<\kappa$, if for every $A$-mutually indiscernible sequences $(\bar{a}_{\alpha})_{\alpha<\kappa}$, with $\bar{a}_{\alpha}=(a_{\alpha,i})_{i<\omega}$ and every $b\models\Sigma$ there is $\beta<\kappa$ such that there exists $\bar{a}'$ indiscernible over $bA$ with $\bar{a}'\equiv_{a_{\beta,0}A}\bar{a}_{\beta}$. We write $\bdn(\Sigma)\geq\kappa$ if the negation holds. A theory $T$ is NTP$_2$ if and only if $\bdn(T)=\bdn(x=x)<\infty$ (here $x$ is a singleton), \textit{i.e.}, there is some cardinal $\kappa$ such that $\bdn(T)<\kappa$. If $\bdn(T)<2$ we  say that $T$ is \emph{inp-minimal}.
 (This is not the original definition of burden or NTP$_2$, but see \cite[Lemma 2.4 and Lemma 3.2]{Ch}.)

\begin{prop} \label{prop:DTR is inp-minimal} DTR is inp-minimal.\end{prop}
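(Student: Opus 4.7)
The plan is to combine the dp-minimality of the tree reduct DT with the genericity of the random graph on open cones. Fix a small set $A$, a single element $b$, and two infinite $A$-mutually indiscernible sequences $I_0 = (a^0_i)_{i<\omega}$ and $I_1 = (a^1_i)_{i<\omega}$ in the monster model of DTR. Since the reduct to $\{\leq, \wedge\}$ is DT, which is dp-minimal by Fact \ref{fac:dt}, at least one of the sequences---say $I_0$---is indiscernible over $Ab$ in the tree language. The goal becomes to produce $\bar{a}'$ indiscernible over $Ab$ in the full language with $\bar{a}' \equiv_{Aa^0_0} I_0$.

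Next I would classify $I_0$ by its shape as a tree-indiscernible sequence (monotone, fan, or comb) and locate $b$ relative to $I_0$ in the tree. By Remark \ref{rem:treef}, the substructure generated by $AbI_0$ introduces at most one new meet point per element, so the combinatorial configuration is tightly controlled. For monotone increasing sequences (which, being $\omega$-indexed, have no maximum), Lemma \ref{lem:indiscernible in tr -> in T*} immediately upgrades tree-indiscernibility to $\Ll^*$-indiscernibility over $Ab$. For monotone decreasing and constant sequences, unwinding the axioms (\texttt{WD}) and (\texttt{OC}) together with tree-indiscernibility over $Ab$ shows that every $R^*$-fact involving $b$ and tuples from $I_0$ reduces to a value independent of the choice of indices, so $I_0$ itself is DTR-indiscernible over $Ab$, and we take $\bar{a}' = I_0$.

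The remaining cases (fans and combs) require constructing a true replacement. The only possible obstruction to DTR-indiscernibility of $I_0$ over $Ab$ lives in $R^*$-facts of the form $R^*(c, a^0_i, b)$ at the relevant meet point $c$, which correspond to edges in the random graph $M_c$ between $b/E_c$ and $a^0_i/E_c$; these may vary with $i$ since the random graph has IP. Using the saturation of $\mathfrak{C}$ and the genericity axioms of the random graph in each $M_c$ (Proposition \ref{prop: models of T on open cones}), I would construct an $Aa^0_0$-indiscernible sequence $\bar{a}'$ in the same orbit as $I_0$ over $Aa^0_0$ such that each $a'_i$ realizes the same $R$-relation with $b/E_c$ as $a^0_0$ does. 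This $\bar{a}'$ is DTR-indiscernible over $Ab$ and satisfies $\bar{a}' \equiv_{Aa^0_0} I_0$.

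The main obstacle will be carrying out this replacement construction simultaneously for all meet points generated by $bAI_0$ (not just the single ``attachment point'' of $b$), and verifying that $a^0_0$ fits at the front of $\bar{a}'$ in a fully $Ab$-indiscernible manner. Both reduce to quantifier elimination in DTR, saturation of $\mathfrak{C}$, and the universality of the random graph in the open cones, but require care in patching together the $R$-patterns in all relevant $M_c$ consistently.
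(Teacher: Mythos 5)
Your proposal takes a genuinely different route from the paper. Where you propose to classify the tree-indiscernible sequence $I_0$ by its shape (monotone, fan, comb) and handle each case separately, the paper argues uniformly: it builds a concrete model $N$ of $\mathrm{TR}$ on the tree $\langle Ic\rangle$ by taking the union of the $R$-relations coming from $\langle I\rangle$ and from copies $\langle a_ic\rangle$ of $\langle a_0c\rangle$ (transported by tree-isomorphisms fixing $c$), closing under axiom $(R3)$, verifying that this yields a legitimate model of $\mathrm{TR}$ containing each $\langle a_ic\rangle$ and $\langle I\rangle$ as substructures, and then embedding $N$ into the monster over $a_0c$ to obtain the desired replacement sequence. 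This amalgamation-style argument sidesteps any shape analysis and simultaneously handles all meet points at once.

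Two caveats about your route. First, the burden computation requires handling mutually indiscernible sequences $\bar a_\alpha$ of \emph{tuples}, not singletons; the monotone/fan/comb trichotomy you invoke is for single-element sequences, so the case split would need to be reworked or replaced for tuples. Second, your decreasing-$\omega$-sequence claim that $\bar a' = I_0$ itself works does not follow from Lemma \ref{lem:indiscernible in tr -> in T*} (which explicitly requires no maximal element, and the paper's own remark after its proof gives a bounded decreasing sequence as a counterexample in general $\Ll$). The claim is likely true for $\mathrm{DTR}$, but only because $R$ is a \emph{binary irreflexive} relation: one must observe that if $t_1,t_2>a_0$ are terms over the base, they fall in the same open cone at $a_0$ (otherwise $a_0$ would be tree-definable over the base, contradicting tree-indiscernibility), hence $R^*(a_0,t_1,t_2)$ is false by irreflexivity. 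Your ``unwinding (\texttt{WD}) and (\texttt{OC})'' does not capture this and the argument as stated has a gap. A cleaner fix is the paper's device of indexing $I$ by $\mathbb{Z}$ so that monotone sequences never have a maximum, after which Lemma \ref{lem:indiscernible in tr -> in T*} applies directly. Finally, the fan/comb replacement construction you describe is exactly the crux of the problem and is only sketched; the paper's uniform construction of $N$ is precisely what discharges this step rigorously.
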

\begin{proof} We show $\bdn(DTR)<2$, as usual working in a monster model $\mathfrak{C}$. For this, let $I_0 = (a_{0,i})_{i\in\mathbb{Z}}, I_1=(a_{1,i})_{i\in\mathbb{Z}}$ be mutually indiscernible sequences and $c\in\mathfrak{C}$ an element in the monster model. We must show that for some $i<2$, there is some $I_i' \equiv_{ca_{i,0}} I_i$ which is indiscernible over $c$ (changing the order type does not matter).

Since DT is dp-minimal (Fact \ref{fac:dt}), one of the sequences must be $c$-indiscernible in the language of trees $\{\leq, \wedge\}$. Assume it is $I_0$ and to ease notation, let $I = I_0 = (a_i)_{i\in\mathbb{Z}}$. We are going to show that there is $I'$ indiscernible over $c$ in such that $I'\equiv_{a_0} I$.
 
Let $N=\langle Ic \rangle$ be the tree generated by $Ic$, and let $B = \langle I \rangle$. We are going to define $R^N$ in such a way that in $\mathcal{L}_{TR}$, (1) $N$ will be a model of TR, (2) $\langle a_0c \rangle \subseteq N$, (3) $a_ic \equiv^{\qf} a_0c$ in $N$ (\textit{i.e} having the same quantifier-free type) and (4) $B \subseteq N$. 

This will be enough, since then (as DTR is the model companion of TR) we can embed $N$ in $\mathfrak{C}$ fixing $a_0c$, thus getting $I'' = (a_i'')_{i\in \mathbb{Z}} \subseteq \mathfrak{C}$ with $a_0'' = a_0$ such that $a_i''c \equiv a_0c$ (by quantifier elimination) and $I'' \equiv_{a_0} I$. By Ramsey and compactness, and applying an automorphism, we can find a $c$-indiscernible sequence $I'$ as required. 

Let $A_0 = \langle a_0c \rangle$ (as an $\mathcal{L}_{TR}$-substructure of $\mathfrak{C}$). There are $\{\leq, \wedge\}$-isomorphisms $\sigma_i$ for $i \in \mathbb{Z}$ fixing $c$ such that $\sigma_i(a_0)=a_i$. For $i\in \mathbb{Z}$, let $A_i$ be the $\mathcal{L}_{TR}$-structures whose universe is $\sigma_i(A_0) = \langle a_ic \rangle$ and whose $\mathcal{L}_{TR}$-structure is the one induced by $\sigma_i$ (actually we only change the definition of $R$ on those structures). Clearly $A_i \models TR$ for all $i \in \mathbb{Z}$. 

Before the construction, recall the notation $E_c$ for ``being in the same open cone'', defined in the beginning of Section \ref{sec:trees}. 

We define $R^N$ as follows. We start with $S = \bigcup_{i \in \mathbb{Z}}{R^{A_i}} \cup R^{B}$ (as we must by (2), (3), (4) above) and we enlarge $S$ to satisfy (R3). Namely, for $b_0,b_1,b_2 \in N$ such that $b_0 = b_1 \wedge b_2 <b_1,b_2$ we let $R^N(b_0,b_1,b_2)$ iff for some $b_1',b_2'$ such that $b_1' \wedge b_1 >b_0$ and $b_2' \wedge b_2 >b_0$ (in other words, $b_1 \mathrel{E_{b_0}} b_1'$ and $b_2 \mathrel{E_{b_0}} b_2'$), $S(b_0,b_1',b_2')$. 

Now let us check (1), (2), (3) and (4) from above. 

(1) It is easy to see that $N \models TR$: (R1), (R2) and (R4) are clear, since these are clearly true with $S$ and the construction of $R$ does not harm this and (R3) is ensured by construction. 

(2) and (3) will both follow if we show that $A_i \subseteq N$ (as $\mathcal{L}_{TR}$-structures) for any $i \in \mathbb{Z}$. Fix some $i^* \in \mathbb{Z}$. We need to show that if $b_0,b_1,b_2 \in A_{i^*}$ and $N\models R(b_0,b_1,b_2)$ then $A_{i^*} \models R(b_0,b_1,b_2)$ (the other direction is by construction). By definition, this means that there are $b_1',b_2' \in N$ such that $b_0< b_1' \wedge b_1, b_2 \wedge b_2'$ and $S(b_0,b_1',b_2')$ holds. There are three cases.

\underline{Case i}: $(b_0, b_1',b_2') \in R^{A_{i^*}}$. Then there is nothing to show since $A_{i^*} \models TR$. 

\underline{Case ii}: $(b_0, b_1',b_2') \in R^{A_j}$ for some $j \neq {i^*}$. Write $b_0 = t_0(a_{i^*},c) = t_0'(a_j,c)$, $b_1 = t_1(a_{i^*},c)$, $b_2 = t_2(a_{i^*},c)$, $b_1' = t_1'(a_j,c)$ and $b_2' = t_2'(a_j,c)$ where $t_0,t_1,t_2,t_0',t_1',t_2'$ are terms. 
By indiscernibility of $I$ over $c$ in the tree language, it follows that $b_0 = t_0(a_i,c) = t_0'(a_i,c)$ for all $i \in \mathbb{Z}$ (\textit{i.e.} it is constant). It follows that $I$ is indiscernible over $b_0c$. 
Again by indiscernibility and the previous sentence, since $t_1(a_{i^*},c) \mathrel{E_{b_0}} t_1'(a_j,c)$ it follows that $t_1(a_{i},c) \mathrel{E_{b_0}} t_1'(a_i,c)$ for all $i\in \mathbb{Z}$, and similarly $t_2(a_i,c) \mathrel{E_{b_0}} t_2'(a_i,c)$ for all $i\in \mathbb{Z}$. 
Finally, since $(b_0, b_1',b_2') \in R^{A_j}$, it follows by construction that $(t_0(a_{i^*},c),t_1'(a_{i^*},c),t_2'(a_{i^*},c)) \in R^{A_{i^*}}$, and as $A_{i^*} \models TR$, it follows from the previous discussion that $(b_0,b_1,b_2) \in R^{A_{i^*}}$, as required.

\underline{Case iii}: $(b_0, b_1',b_2') \in R^{B}$. We use the notation $d_{i,n} = (a_i)_{i-n < i <i+n}$ for $i\in \mathbb{Z},n \in \omega$. Let $t_0,t_1,t_2$ be as in the previous case. Fix some $n<\omega$ so that we can write $b_0 = t_0(a_{i^*},c) = t_0'(d_{i^*,n})$, $b_1' = t_1'(d_{i^*,n})$ and $b_2' = t_2'(d_{i^*,n})$ for some terms $t_0',t_1',t_2'$.
As $I$ is indiscernible in $\mathcal{L}_{TR}$ (over $\emptyset$), it follows that $R(t_0(d_{0,n}),t_1'(d_{0,n}),t_2'(d_{0,n}))$ (in $\mathfrak{C}$, so in $B$). As $I$ in indiscernible over $c$ in the tree language, it follows that $t_0(a_0,c) = t_0'(d_{0,n})$ and that $t_1(a_0,c) \mathrel{E_{t_0(a_0,c)}} t_1'(d_{0,n})$ and $t_2(a_0,c) \mathrel{E_{t_0(a_0,c)}} t_2'(d_{0,n})$. As $\mathfrak{C} \models TR$, it follows that $A_0 \models R(t_0(a_0,c),t_1(a_0,c),t_2(a_0,c))$. By construction of $A_{i^*}$ it then follows that $A_{i^*} \models R(t_0(a_{i^*},c),t_1(a_{i^*},c),t_2(a_{i^*},c))$ as required.

Finally we make sure that (4) holds, \textit{i.e.} that $B \subseteq N$. We need to show that if $b_0,b_1,b_2 \in B$ and $N\models R(b_0,b_1,b_2)$ then $B \models R(b_0,b_1,b_2)$ (the other direction is by construction). By definition, there are $b_1',b_2' \in N$ such that $b_0< b_1' \wedge b_1, b_2 \wedge b_2'$ and $S(b_0,b_1',b_2')$ holds. There are two cases.

\underline{Case I}: $(b_0, b_1',b_2') \in R^B$. Then this is clear, since $B \models TR$. 

\underline{Case II}: $(b_0, b_1',b_2') \in R^{A_i}$ for some $i \in \mathbb{Z}$. The argument is analog to the one in Case iii above. Using the same notation, we write $b_0 = t_0(d_{i,n}) = t_0'(a_i,c)$, $b_1 = t_1(d_{i,n})$, $b_2 = t_2(d_{i,n})$, $b_1' = t_1'(a_i,c)$, $b_2' = t_2'(a_i,c)$ for some $n<\omega$ and terms $t_0,t_1,t_2,t_0',t_1',t_2'$. 
By construction of $A_i$, we have that $A_0 \models R(t_0'(a_0,c),t_1'(a_0,c),t_2'(a_0,c))$ (so this is true in $\mathfrak{C}$). 
As $I$ is indiscernible over $c$ in the language of trees, we have that $t_0(d_{0,n}) = t_0'(a_0,c)$, $t_1(d_{0,n}) \mathrel{E_{t_0(d_{0,n})}} t_1'(a_0,c)$ and $t_2(d_{0,n}) \mathrel{E_{t_0(d_{0,n})}} t_2'(a_0,c)$. 
It follows that $B \models R(t_0(d_{0,n}),t_1(d_{0,n}),t_2(d_{0,n}))$ (since this is true in $\mathfrak{C}$). 
Finally, as $I$ is indiscernible over $\emptyset$ in $\mathcal{L}_{TR}$, we have that $B \models R(t_0(d_{i,n}),t_1(d_{i,n}),t_2(d_{i,n}))$ as required. 

This concludes the proof. \end{proof}

\subsection{Non-distal example} \label{sec:non-distal example}

One may ask now whether Question \ref{question} fails for distal types but holds for non-distal ones. This is not the case, as we explain now. 
\begin{prop} There is a counterexample to Question \ref{question} in which the type is non-distal and also non-co-distal. \end{prop}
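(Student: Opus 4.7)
The plan is to modify the DTR construction from Section \ref{sec:a general construction} so that the analogue of the type $\pi_T$ from Section \ref{sec: the type} still yields a counterexample to Question \ref{question}, but in addition becomes non-distal and non-co-distal. The obstruction in the DTR case was Theorem \ref{the:pi is dp-minimal and distal}, which forced distality of $\pi_T$ via the clause $\{x<c\}$ together with the tree-reduction Lemma \ref{lem:indiscernible in tr -> in T*}; we will break this by adding enough structure below $c$ to let a single parameter distinguish one point of a dense indiscernible sequence of realizations from its neighbors.

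Concretely, I would enrich the language $\mathcal{L}_{TR}$ by an extra symbol $S$ (for instance a ternary predicate) encoding additional generic structure attached to a single branch, chosen so that the class of finite models of the new universal theory $T_\forall^{**}$ still satisfies HP, JEP and SAP. The amalgamation follows the same case-by-case pattern as in Proposition \ref{pro:amalgamation of T*}, the new predicate being dealt with by an SAP step analogous to the treatment of $R^*$ at $b_1$ in Case II there. Let $T^{**}$ denote the resulting model completion; by Fact \ref{fac:building} it is $\omega$-categorical and has QE, and the analogues of Proposition \ref{prop: models of T on open cones} and Corollary \ref{cor:T* NIP} give that open cones of any center are models of the random graph theory, so that in particular IP survives on open cones and, via the analogue of Proposition \ref{prop:dpr(q_T) >= dpr(x=x)}, the restriction of the corresponding type $\pi'$ to $M$ has IP.

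Fix $M\models T^{**}$, a branch $B$, the partial type $\rho(x)=\{x>b : b\in B\}$, its completion $q(x)\in S(M)$ (by the direct analogue of Proposition \ref{prop:rho determines a complete type}), a realization $c\models q$, and $\pi'(x) := q(x)\cup\{x<c\}$. Non-forking of $\pi'$ over $M$ is inherited from finite satisfiability in $B$, and $\pi'\restriction M$ has IP as above. For non-distality, using the extra $S$-predicate I would exhibit a $\emptyset$-indiscernible sequence $I_1+a+I_2$ of realizations of $\pi'$ and a tuple $d$ such that an $S$-formula witnesses failure of $d$-indiscernibility at $a$ while $I_1+I_2$ remains $d$-indiscernible. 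Non-co-distality is shown dually by swapping the roles of $a$ and $d$: one builds an $\emptyset$-indiscernible sequence whose ``hole'' is only detectable from a realization $b\models\pi'$ once $a$ is placed back in.

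The main obstacle will be designing $S$ so that three conditions hold simultaneously: the enlarged Fra\"iss\'e class has SAP; the IP on open cones is preserved so that $\pi'\restriction M$ still has IP; and, most importantly, the $S$-data genuinely produces non-distality and non-co-distality of $\pi'$ rather than being washed out by a tree-style reduction as in Lemma \ref{lem:indiscernible in tr -> in T*}. Once SAP is verified, the remaining verifications (QE, $\omega$-categoricity, NIP of $\pi'$) are essentially a repetition of the arguments of Sections \ref{sec:a general construction} and \ref{sec: the type}.
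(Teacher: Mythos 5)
Your overall strategy matches the paper's: start from the DTR template, add extra generic structure to the language so that amalgamation survives, keep the random-graph-on-open-cones behaviour so that the restriction of the type to $M$ still has IP, and then use the new structure to break distality and co-distality. However, you leave the central construction unspecified and you say so explicitly (``The main obstacle will be designing $S$ \dots''). That is a genuine gap, not a detail: the whole content of the statement is the concrete design, and your vague proposal of a ternary predicate ``encoding additional generic structure attached to a single branch'' is not obviously realizable in a Fra\"iss\'e setting (a branch is not a definable or even structural object of the finite pieces, so there is no natural hereditary/amalgamable class formulation of ``structure attached to a branch'').

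The paper's actual choice is simpler and crucially different from what you sketch. One works in the language $\Ll_{TR}\cup\{E\}$ with $E$ a single \emph{binary} relation, and TR$_E$ is TR plus ``$E$ is an equivalence relation'' (a free, global equivalence relation with no interaction with the tree). Amalgamation is immediate because one can amalgamate the TR-part and the $E$-part independently. NIP of the ambient theory DTR$_E$ and finite dp-rank of $Tr_E$ then follow from the same Sauer--Shelah type-counting as in Corollary~\ref{cor:T* NIP}. The point you do not anticipate at all is the modification of the partial type: one must take $\rho_E(x)=\{x>b : b\in B\}\cup\{\lnot E(x,a) : a\in M\}$, forcing the generic point into a fresh $E$-class, which is what guarantees $\rho_E$ still determines a complete type $q_E\in S(M)$ that is finitely satisfiable in $B$ (using that every interval meets infinitely many $E$-classes). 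Finally, both non-distality and non-co-distality are witnessed in one stroke by a $Bc$-indiscernible sequence $I=I_1+a+I_2$ of realizations of $\pi_E$ lying in pairwise distinct $E$-classes, together with $d\models\pi_E$ chosen above $I$ with $d\mathrel{E}a$: then $I_1+I_2$ is $Bcd$-indiscernible but $I_1+a+I_2$ is not. Your outline does not supply this witness and, more importantly, does not supply an actual structure in which to look for it.
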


\begin{proof}

Consider the language $\Ll_{TR} \cup \{E\}$ where $E$ is a binary relation, and let TR$_E$ be TR + ``$E$ is an equivalence relation''. It is easy to see that we still have amalgamation for the class of finitely generated structures (amalgamate the TR-structure and then $E$ independently), so we again get a model completion DTR$_E$. This is just adding a generic equivalence relation to TR. It is not hard to see that DTR$_E$ expands DTR and that it expands $Tr_E$ which is the theory of trees with a generic equivalence relation (as in Proposition \ref{prop: models of T on open cones}). 

Note that TR$_E$ is NIP. This follows by an easy type-counting argument as in the proof of Corollary \ref{cor:T* NIP}: by the Sauer-Shelah lemma \cite[Lemma 6.4]{Sim}, it is enough to see that there is some $k<\omega$ such that for any finite set $A \subseteq M \models TR_E$ there are at most $|A|^k$ 1-types over $A$. By Remark \ref{rem:treef}, we may assume that $A$ is a substructure and by quantifier elimination it is enough to count quantifier-free types. Given $b$, let $b' = \max \Set{a \wedge b}{a \in A}$ and choose some $a\in A$ with $b \leq a$. The type $\tp(b/A)$ is determined by knowing the order-type of $b'$ over $A_{\leq a}$, whether $b > b'$ or $b = b'$ and the $E$-type of $bb'$ over $A$. This is easily polynomial in $|A|$. 
By the same argument in the ``moreover'' part of the proof of Corollary \ref{cor:T* NIP}, it follows that $Tr_E$ has finite dp-rank, say $<k$. 

Choose a branch $B$ in the countable model $M$. Let $\rho_E(x) = \Set{x>b}{b\in B}\cup\Set{\lnot E(x,a)}{a\in M}$. Precisely as in Proposition \ref{prop:rho determines a complete type}, $\rho_E$ determines a complete type $q_E$ over $M$ and is finitely satisfiable in $B$ (since in any interval there are infinitely many $E$-classes). 

As before let $c \models q_E$ and let $\pi_E(x) = q_E(x) \cup \{x<c\}$. Then $\pi_E(x)$ is NIP, in fact of dp-rank at most $k$, by the same proof as in Proposition \ref{cor:main corollary about examples}: given $k+1$ sequences, mutually indiscernible over some $A$ whose elements are realizations of $\pi_E$, and some $b\models \pi_E$, one of them is indiscernible over $Ab$ in the language of $Tr_E$ (because $\dpr(Tr_E) < k$), and then the same proof of Lemma \ref{lem:indiscernible in tr -> in T*} goes through (we only have to consider formulas of the form $R(t_0(x,y),t_1(x,y),t_2(x,y))$). 

Also, $q_E$ has IP by the same argument as in Proposition \ref{prop:dpr(q_T) >= dpr(x=x)}: in $\Cc_c$ we can realize a random graph. 

Finally, consider a $Bc$-indiscernible sequence $I = I_1+a+I_2$ of realizations of $\pi_E(x)$ in pairwise distinct $E$-classes. Let $d \models \pi_E$ be above $I$ and $d \mathrel{E} a$. Then $I_1+I_2$ is $Bcd$-indiscernible (by quantifier elimination and the axioms, noting that for any term $t(x)$ over $Bcd$ and any tuple $f$ from $I$, $t(f)$ is either $\in Bcd$ or in $f$), but $I_1+a+I_2$ is not. So $\pi_E$ is neither distal nor co-distal. \end{proof} 

\section{Open questions}
\label{sec:open questions}
Note that in our examples, after adding just one realization of the type to the base, the restriction became NIP and even distal and dp-minimal. In light of Theorem \ref{the:result1}, one can now ask for more refined examples. 

\begin{que2}
    Is there a theory $T$, a global NIP type $p$ non-forking over a set $A$, such that for every (some) Morley sequence $I = (a_i)_{i<\omega}$ and for every $n<\omega$, $p \restriction Aa_{<n}$ has IP?
\end{que2}

Alternatively, one may ask a weaker question, of finding different theories or different types for each $n<\omega$. 

A similar question can be asked about Theorem \ref{the:preserving dp-rank}, namely how much of the Morley sequence is actually needed to preserve the dp-rank.


It could be an interesting question to find a class of theories for which Question \ref{question} has an affirmative answer. Trivially NIP theories are such, but also NSOP theories by Remark \ref{rem:nsop}. We may discard NTP$_2$ and even inp-minimal theories because our counterexample.

It may be then interesting to restrict our attention to another class of NTP$_2$ such as Resilient theories (see \cite{BC}). Thus we have the following natural question:
\begin{que2} Is DTR resilient?\end{que2}
This could also be answered directly if the conjecture about the coincidence of Resilient and NTP$_2$ theories were proved (see for instance \cite[Question 4.14]{BC}).

Another class of theories one may consider is that of Rosy theories.

\begin{rk} Dense trees are not Rosy.\end{rk}
\begin{proof} To see this, we can apply the characterization given in \cite{EaOn}. They prove that a theory is Rosy if and only if it has ordinal-valued equivalence relation rank (see \cite[Definition 5.1]{EaOn}). In trees, an infinite decreasing chain of open cones witnesses that it has unbounded equivalence relation rank, so it is not Rosy.\end{proof}

The following question was asked by Pierre Simon during a talk given by the second author.

\begin{que2} Is there any counterexample to Question \ref{question} which is Rosy?\end{que2}
Similarly, we may ask
\begin{que2} Is it true that if $T$ is Rosy (and maybe also NIP), $p$ a global type non-forking over $A$, $\dpr(p) = \dpr(p\restriction A)$?\end{que2}

As well as being natural questions, given the remark above, it can be motivated by the fact that the original paper starting this line of research \cite{HassonOnshuus} was in the context of Rosy theories, and Corollary 2.27 from there precisely answers the stable variant of Question \ref{question} in that context.


\bibliographystyle{alpha}
\bibliography{Preservation}

\vspace{10pt}
\noindent\textit{Pedro Andr\'es Estevan}\\
\noindent\textit{Departament de Matemàtiques i Informàtica}\\
\noindent\textit{Universitat de Barelona}\\
\textit{p.a.estevani@gmail.com}\\

\vspace{5pt}
\noindent\textit{Itay Kaplan}\\
\noindent\textit{Einstein Institute of Mathematics}\\
\textit{The Hebrew University of Jerusalem, 91904, Jerusalem Israel}\\
\textit{kaplan@math.huji.ac.il}\\

\end{document}